\title[Compressed sensing for inverse problems]{Compressed sensing for inverse problems and the sample complexity of the sparse Radon transform}
\author[G.\ S.\ Alberti]{Giovanni S.\ Alberti}
\address{MaLGa Center, Department of Mathematics, University of Genoa, Via Dodecaneso 35, 16146 Genova, Italy.}
\email{giovanni.alberti@unige.it}
\author[A.\ Felisi]{Alessandro Felisi}
\address{MaLGa Center, Department of Mathematics, University of Genoa, Via Dodecaneso 35, 16146 Genova, Italy.}
\email{alessandro.felisi@edu.unige.it}
\author[M.\ Santacesaria]{Matteo Santacesaria}
\address{MaLGa Center, Department of Mathematics, University of Genoa, Via Dodecaneso 35, 16146 Genova, Italy.}
\email{matteo.santacesaria@unige.it}
\author[S.\ I.\ Trapasso]{S.\ Ivan Trapasso}
\address{Department of Mathematical Sciences ``G.\ L.\ Lagrange'', Politecnico di Torino, Corso Duca degli Abruzzi, 24, 10129 Torino, Italy}
\email{salvatore.trapasso@polito.it}
\date{\today}
\theoremstyle{plain}
\newtheorem{theorem}{Theorem}[section]
\newtheorem{lemma}[theorem]{Lemma}
\newtheorem{corollary}[theorem]{Corollary}
\newtheorem{proposition}[theorem]{Proposition}
\theoremstyle{definition}
\newtheorem{assumption}[theorem]{Assumption}
\theoremstyle{definition}
\newtheorem{definition}[theorem]{Definition}
\theoremstyle{definition}
\theoremstyle{plain}
\newtheorem*{theorem*}{Theorem}
\theoremstyle{definition}
\newtheorem{example}[theorem]{Example}
\theoremstyle{remark}
\newtheorem{remark}[theorem]{Remark}
\def\bC{\mathbb{C}}
\def\bE{\mathbb{E}}
\def\bN{\mathbb{N}}
\def\bP{\mathbb{P}}
\def\bR{\mathbb{R}}
\def\bS{\mathbb{S}}
\def\bT{\mathbb{T}}
\def\bZ{\mathbb{Z}}
\def\cB{\mathcal{B}}
\def\cD{\mathcal{D}}
\def\cF{\mathcal{F}}
\def\cH{\mathcal{H}}
\def\cHd{L_\mu^2(\cD;\cHt)}
\def\cHt{\mathcal{H}_2}
\def\cL{\mathcal{L}}
\def\cN{\mathcal{N}}
\def\cP{\mathcal{P}}
\def\cS{\mathcal{S}}
\def\cX{\mathcal{X}}
\def\cR{\mathcal{R}}
\def\rd{\bR^d}
\def\lc{\left(}
\def\rc{\right)}
\def\supp{\operatorname{supp}}
\def\*b{*_{\bullet}}
\def\Bd'{B_{\delta'}}
\def\cBd'{\bar{B}_{\delta'}}
\def\Ga{G_\alpha}
\newcommand{\diag}{\mathrm{diag}}
\newcommand{\diff}{\mathrm{d}}
\newcommand{\fc}{\mathfrak{c}}
\newcommand{\Span}{\mathrm{span}}
\begin{document}
\begin{abstract}
Compressed sensing allows for the recovery of sparse signals from  few measurements, whose number is proportional to the sparsity of the unknown signal, up to logarithmic factors. The classical theory typically considers either random linear measurements or subsampled isometries and has found many applications, including accelerated magnetic resonance imaging, which is modeled by the subsampled Fourier transform. In this work, we develop a general theory of infinite-dimensional compressed sensing for abstract inverse problems, possibly ill-posed, involving an arbitrary forward operator. This is achieved by considering a generalized restricted isometry property, and a quasi-diagonalization property of the forward map.

As a notable application, for the first time, we obtain  rigorous recovery estimates for the sparse  Radon transform (i.e., with a finite number of angles $\theta_1,\dots,\theta_m$), which models computed tomography, in both the parallel-beam and the fan-beam settings. In the case when the unknown signal is $s$-sparse with respect to an orthonormal basis of compactly supported wavelets, we prove stable recovery under the condition
\[
m\gtrsim s,
\]
up to logarithmic factors.
\end{abstract}

\keywords{Radon transform, infinite-dimensional compressed sensing, sparse recovery, sample complexity, wavelets, quasi-diagonalization, coherence, inverse problems}

\subjclass{42C40; 44A12; 60B20; 94A20}

\date{\today}
\thanks{
This material is based upon work supported by the Air Force Office of Scientific Research under award number FA8655-20-1-7027. We acknowledge the support of Fondazione Compagnia di San Paolo. Co-funded by the European Union (ERC, SAMPDE, 101041040). Views and opinions expressed are however those of the authors only and do not necessarily reflect those of the European Union or the European Research Council. Neither the European Union nor the granting authority can be held responsible for them. The authors are members of the ``Gruppo Nazionale per l’Analisi Matematica, la Probabilità e le loro Applicazioni'', of the ``Istituto Nazionale di Alta Matematica''.
}

\maketitle

\pagebreak
\tableofcontents

\section{Introduction}\label{sec-intro}

\subsection{State of the art}

Compressed sensing (CS) \cite{CRT,donoho2006compressed,eldar-kutyniok-2012,FR,fornasier-2015} has been a very active research area within applied mathematics over the last two decades, characterized by a fruitful exchange between theoretical analysis and technological advances. The focus of CS is the recovery of sparse signals, namely those that can be expressed as a linear combination of a small number of vectors from a reference dictionary. Leveraging sparsity allows for obtaining stable recovery results with a smaller number of measurements than in standard sampling theory.

Over the years, in CS theory there has been a transition from the finite-dimensional setting to the infinite-dimensional one \cite{adcock2016generalized,AHC,AAH,alberti2017infinite,adcock_book}. This shift was largely stimulated by the need to encompass increasingly challenging sensing tasks, which happen to be more faithfully framed in an analog world in terms of suitable operators and function spaces. In fact, the analysis of problems where one aims at recovering a physical quantity given some indirect measurements is the main focus of the theory of inverse problems (IP), with a long tradition in mathematical and numerical analysis \cite{2011-kirsch,mueller2012,scherzer_book,2017-isakov}. Most IP are described by integral or differential equations, and are therefore naturally modeled in an infinite-dimensional setting.

The reconstruction guarantees currently available in the mathematical literature on IP rely on the assumption that an observer has access to an arbitrarily large number of measurements, except for a limited number of special cases (see \cite{alberti2018,Harrach_2019,alberti2022infinite,alberti-arroyo-santacesaria-2023} and references therein, in which, however, no sparsity information is exploited, but only the finite-dimensionality of the unknown). It is clear that in real-life scenarios one can only take a finite number of measurements, usually corrupted by noise and also suffering from limited control on the procedure due to experimental constraints. This discussion calls for the development of a \textit{sample complexity} theory of ill-posed IP, namely, a study of the minimum (finite) number of measurements needed to achieve stable and accurate reconstruction.

It is intuitively clear that the principles of CS could play a role here. Indeed, if the unknown quantity is sparse in some dictionary, then one could approximately locate the signal in some finite-dimensional subspace of the domain of the measurement operator. In other words, a sparsity condition comes along with an inherently finite-dimensional setup. Not surprisingly, such a reduction step is quite delicate and may eventually destroy important information on the genuine infinite-dimensional structure. The framework of \textit{generalized sampling} \cite{ADCOCK2014187,adcock2016generalized,AH1,AHC} addresses this issue, as it relies on finding appropriate sampling and sparsity truncation bandwidths in order to perform the finite-dimensional reduction in a stable way.

Although ideas from CS have influenced the analysis of IP (e.g., sparse regularization \cite{benning_burger,grasmair,grasmair_penalty}), the state of the art is still far from being completely satisfactory, mostly due to technical limitations and a case-by-case approach aimed at leveraging the peculiar features of each model. Despite a large body of literature on the numerical analysis of sparsity-promoting approaches for inverse problems (see, e.g., \cite{2012-jin-maass,jorgensen-sidky-2015,garde2016}), rigorous theoretical guarantees are missing (see \cite{herrolz_ip,alberti2017infinite,campodonico-2021} for some results in this direction).

\subsection{Main contributions}
\subsubsection{Summary}
The purpose of the present article is to move the first steps towards a unified framework for the study of the sample complexity of ill-posed IP under realistic sparsity constraints, by extending the current theory of CS. The main goal is to find the connection between the number of samples needed to achieve recovery of a signal and its sparsity with respect to a suitable dictionary.

In addition to the formulation of the general framework for studying the sample complexity, we present an application to the sparse angle tomography problem. Basically, the goal is to obtain information on the inner structure of an object from the knowledge of a limited number of projections obtained via the Radon transform. Tomographic techniques appear in a variety of real-life situations, including nondestructive testing, biomedical and geophysical imaging, and analysis of astronomical signals. The analysis of tomography IP is a well-established topic and still a very active research area \cite{natterer,natterer_xray,scherzer_g_book,scherzer_book,2021-hansen-etal}. 

While the sparse Radon transform has been thoroughly studied from the numerical point of view and ideas of CS have found applications to tomographic imaging shortly after its appearance \cite{S-Siltanen_2003,2012-arxiv-hansen,sparse-tomography-2013,jorgensen-sidky-2015,Jorgensen_2017}, rigorous results on the sample complexity for the Radon transform and the related technical challenges have been beyond the reach of the CS theory so far. Our main contribution in this respect is to show that signals supported in the unit ball of the plane, assumed to be compressible in a suitable wavelet dictionary, can be reconstructed with high probability if a sufficiently large number of samples of the corresponding Radon transform at different angles is available -- see Theorems \ref{thm:radon-main-thm}, \ref{thm:radon-dir-prob} and \ref{thm:radon-exp-est} below. We consider both the parallel-beam and the fan-beam settings. In simplified terms, we obtain the estimate
\[
\text{number of angles $\gtrsim$ sparsity,}
\]
up to logarithmic factors.

Let us briefly illustrate the key aspects of both the abstract sample complexity framework and its application to the sparse Radon inversion.

\subsubsection{Sample complexity of sparse ill-posed IP}

Let $\cH_1,\cH_2$ be separable complex Hilbert spaces. We assume that the measurements are represented by a family of uniformly bounded linear maps $(F_t)_{t \in \cD} \colon \cH_1 \to \cHt$, indexed over a measure space $(\cD,\mu)$ endowed with a probability distribution $\nu$ such that $\diff{\nu} = f_\nu \diff{\mu}$ for a density function $f_\nu \in L^1(\mu)$, $\|f_\nu\|_{L^1} =1$, satisfying $f_\nu \ge c_\nu$ for some $0<c_\nu \le1$. The forward map $F \colon \cH_1\to L_\mu^2(\cD;\cHt)$ of the model is thus naturally defined by the relationship $F_t u = Fu(t)$, for almost every $t \in \cD$. This setting encompasses both scalar measurements given by inner products with respect to a certain dictionary and by pointwise evaluations. Moreover, we consider measurements taking values in a Hilbert space; this is related to the use of block-sampling strategies in CS \cite{2015-polak-etal,2016-bigot-boyer-weiss}.

We fix once for all an orthonormal basis $(\phi_i)_{i \in \Gamma}$ of $\cH_1$ as a reference dictionary, and let $\Phi\colon \cH_1 \to \ell^2(\Gamma)$ be the corresponding analysis operator -- here $\Gamma$ is an (at most) countable index set, e.g., $\Gamma=\bN$ or $\Gamma\subset\bN\times\bZ$. The signals we consider will be sparse with respect to $(\phi_i)_{i}$.

The dictionary, the measurement operators and the probability density are assumed to be intertwined by a \textit{coherence bound} of the form \[ \|F_t\phi_i\|_{\cHt} \leq B\sqrt{f_{\nu}(t)},\quad t\in\cD, \, i \in \Gamma, \] for some $B\ge1$. 

Consider now an unknown signal $u^\dagger \in \cH_1$. We assume that there exists a finite subset of indices $\Lambda \subset \Gamma$, with $|\Lambda|=M$, such that $\|P_\Lambda^\perp x^\dagger\|$ is small, where we set $x^\dagger \coloneqq \Phi u^\dagger$ and $P_\Lambda$ denotes the orthogonal projection on $\mathrm{span}\{e_i : i \in \Lambda\}$, $(e_i)_{i \in \Gamma}$ being the canonical basis of $\ell^2(\Gamma)$.
Therefore, $\Lambda$ should be thought of as a rough estimate of the support of $x^\dagger$.   One could consider more general forms of tail decay rates for $\|P_{\Lambda}^{\perp} x^\dagger \|_2$, which are usually linked to the regularity of $u^\dagger$ and $\Phi$. We will exploit this connection in Sections~\ref{sec-main res} and \ref{sec-radon} in order to discuss applications to relevant signal classes. For simplicity, here we assume $P_\Lambda^\perp x^\dagger=0$.

Given $m$ i.i.d.\ samples $t_1, \ldots, t_m \in \cD$ drawn from $\nu$, the corresponding measurements are
\[
y_k=F_{t_k}u^\dagger+\varepsilon_k,\qquad k=1,\dots,m,
\]
where the noise vector $\varepsilon\in\cHt^m$ is such that $\|\varepsilon_k\|_{\cHt}\leq\beta$ for some $\beta>0$, for every $k$ -- in other words, we have that the measurement indexed by $t_k$ comes with a noise $\varepsilon_{k}$ that is uniformly bounded over the samples.

 We attempt to reconstruct $x^\dagger$ (and, consequently, $u^\dagger=\Phi^* x^\dagger$) via the $\ell^1$-minimization problem
\begin{equation}\label{eq-minpbm-intro}  
\min_{x\in\ell^2(\Lambda)} \|x\|_{1} \quad : \quad \frac1m\sum_{k=1}^m\|F_{t_k}\Phi^* x-y_k\|_{\cHt}^2 \leq \beta^2, 
\end{equation}

The transition to a finite-dimensional setup is performed via a stable truncation of the forward map $F$ of the model. To be more precise, we consider the $M\times M$ matrix given by
\[ G = \sqrt{P_\Lambda \Phi F^*F \Phi^*  P_\Lambda^*}. \]
Stability ultimately reduces to the invertibility of $G$, which can be ensured if $F$ satisfies a restricted injectivity assumption known as the FBI property, which is usually met in many IP of interest -- for example whenever the operator $F$ is injective, see Definition \ref{def-fbi} below for further details. More generally, this condition can be relaxed if a form of elastic net regularization is introduced in the minimization problem \eqref{eq-minpbm-intro}, so that $G$ can be replaced by $\Ga = \sqrt{P_\Lambda \Phi F^*F \Phi^* P_\Lambda^* + \alpha^2 I_\Lambda}$ for $\alpha > 0$, so that the invertibility of $\Ga$ is no longer a concern. For simplicity, this generalization will not be treated in this work.

A largely simplified version of our abstract result for the sample complexity of abstract inverse problems (Theorem \ref{thm-main-result}) reads as follows.

\begin{theorem*} Under the assumptions outlined so far, there exist universal constants $C_0,C_1,C_2>0$ such that, for any integer $2\le s \le |\Lambda|$, if $\widehat{x}\in\ell^2(\Lambda)$ is a solution of the minimization problem \eqref{eq-minpbm-intro} and the number of samples satisfies
\[ m \geq C_0 B^2\|G^{-1}\|^4 s,\]
up to logarithmic factors, then the following recovery estimate holds with overwhelming probability:
    \[\|x^{\dagger}-\widehat{x}\|_2 \leq C_1\frac{\sigma_s(x^\dagger)_{1}}{\sqrt{s}} + C_2c_{\nu}^{-1/2} \|G^{-1}\| \beta,\]
the term $\sigma_s(x^\dagger)_{1} = \inf\{\|x^\dagger - z \|_1 : z \in \ell^2(\Gamma), |\supp(z)| \le s\}$ representing the error of best $s$-sparse approximation to $x^\dagger$.
\end{theorem*}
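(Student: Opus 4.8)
The plan is to run the usual compressed-sensing pipeline---generalized restricted isometry property, then robust null space property, then the recovery estimate---adapted to the (generally non-isometric) operator $G^{2}=P_\Lambda\Phi F^{*}F\Phi^{*}P_\Lambda^{*}$, exploiting, besides the coherence bound, a quasi-diagonalization property of $\Phi F^{*}F\Phi^{*}$ with respect to the canonical basis $(e_i)$, which is among the hypotheses of the full statement (Theorem~\ref{thm-main-result}). \emph{Step 1 (reduction to a finite-dimensional sensing operator).} Since \eqref{eq-minpbm-intro} already searches over $\ell^{2}(\Lambda)\cong\mathbb{C}^{M}$, only a change of normalization is needed. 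Set $(Ax)_k=\frac{1}{\sqrt{m\,f_\nu(t_k)}}F_{t_k}\Phi^{*}x$ and $\widetilde y_k=\frac{1}{\sqrt{m\,f_\nu(t_k)}}y_k$; using $\diff\nu=f_\nu\diff\mu$ one gets $\bE[A^{*}A]=G^{2}$, so $A$ is isometric in $G$-mean. Since $f_\nu\ge c_\nu$, any $x$ feasible for \eqref{eq-minpbm-intro} satisfies $\|Ax-\widetilde y\|_{\cHt^{m}}\le c_\nu^{-1/2}\beta=:\eta$; in particular this holds for a minimizer $\widehat x$, while $x^{\dagger}$ is feasible for \eqref{eq-minpbm-intro} (because $\Phi^{*}x^{\dagger}=u^{\dagger}$ and $\|\varepsilon_k\|_{\cHt}\le\beta$), so also $\|Ax^{\dagger}-\widetilde y\|_{\cHt^{m}}\le\eta$. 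Hence $\|\widehat x\|_{1}\le\|x^{\dagger}\|_{1}$ and $\|A(\widehat x-x^{\dagger})\|_{\cHt^{m}}\le 2\eta$; these are the only two inputs the rest of the argument uses.

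\emph{Step 2 (generalized RIP).} I would show that, with overwhelming probability,
\begin{equation*}
(1-\delta)\,\|Gx\|_{2}^{2}\;\le\;\|Ax\|_{\cHt^{m}}^{2}\;\le\;(1+\delta)\,\|Gx\|_{2}^{2}\qquad\text{for every }x\text{ with }|\supp(x)|\le 2s,
\end{equation*}
as soon as $m\gtrsim\delta^{-2}B^{2}\|G^{-1}\|^{2}s$ up to logarithmic factors. The device is to orthonormalize: the $\cHt$-valued system $t\mapsto f_\nu(t)^{-1/2}F_t\Phi^{*}G^{-1}e_i$, $i\in\Lambda$, is orthonormal in $L^{2}_\nu(\cD;\cHt)$ (a direct computation, using that $G$ is self-adjoint and $G^{2}$ is its Gram operator), and the coherence bound $\|f_\nu(t)^{-1/2}F_t\phi_i\|_{\cHt}\le B$ together with quasi-diagonalization---which keeps $G^{-1}$ essentially diagonal, so that $\|G^{-1}e_i\|_{1}\lesssim\|G^{-1}\|$ rather than the trivial $\sqrt{M}\,\|G^{-1}\|$---bounds this orthonormal system uniformly by $\lesssim B\|G^{-1}\|$. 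The restricted isometry property for random sampling of bounded orthonormal systems (in its operator/block-valued form) then yields the claim for $\widetilde A:=AG^{-1}$ with the stated $m$; transferring it back through $A=\widetilde AG$, and using quasi-diagonalization once more to discard the non-sparse tail of $Gx$ when $x$ is sparse, gives the displayed generalized RIP.

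\emph{Step 3 (generalized robust null space property and conclusion).} I would then rerun the standard proof that RIP implies the $\ell^{2}$-robust null space property, carrying $G$ through the polarization and bucketing steps. The genuinely new contributions are the cross terms $\langle G^{2}w,v\rangle$ for $w,v$ with disjoint supports---these vanish when $G=I$ and are here controlled by the off-diagonal decay of $\Phi F^{*}F\Phi^{*}$---and the conversions $\|w\|_{2}\le\|G^{-1}\|\,\|Gw\|_{2}$, which feed the conditioning of $G$ into the constants. Taking $\delta\sim\|G^{-1}\|^{-1}$ and the quasi-diagonalization strong enough makes the null-space ratio $\rho<1$ and the stability constant $\tau\lesssim c_\nu^{-1/2}\|G^{-1}\|$; this calibration of $\delta$, substituted into $m\gtrsim\delta^{-2}B^{2}\|G^{-1}\|^{2}s$ from Step~2, is exactly what produces the $\|G^{-1}\|^{4}$ in the sample complexity. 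The classical robust-null-space-property recovery bound applied to $\widehat x$, together with $\|A(\widehat x-x^{\dagger})\|_{\cHt^{m}}\le 2\eta=2c_\nu^{-1/2}\beta$, then gives
\begin{equation*}
\|x^{\dagger}-\widehat x\|_{2}\;\le\;C_{1}\,\frac{\sigma_s(x^{\dagger})_{1}}{\sqrt{s}}+C_{2}\,c_\nu^{-1/2}\|G^{-1}\|\,\beta .
\end{equation*}

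I expect the main obstacle to be Step~2: obtaining the sharp, \emph{linear}-in-$s$ sample count (rather than the $s^{2}$ of a crude union bound) requires the chaining-type argument underlying the bounded-orthonormal-systems RIP, adapted to $\cHt$-valued entries, and the reduction to such a system is viable only because quasi-diagonalization prevents $G^{-1}$ from smearing the $L^{\infty}$ bound across all $M$ coordinates. A secondary, more bookkeeping-heavy difficulty is Step~3, where one must track how each occurrence of $G$, $G^{-1}$ and the quasi-diagonalization error propagates into $\rho$ and $\tau$, and tune $\delta$ accordingly.
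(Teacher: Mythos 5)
There are two genuine gaps, both in the mechanism by which you handle the ill-conditioning of $G$, and they are precisely the points where the paper's argument (Theorems~\ref{thm-samp-grip} and \ref{thm-grip-rnsp}) departs from the classical pipeline. First, your Step~2 orthonormalizes via $G^{-1}$ and then wants to invoke the bounded-orthonormal-systems RIP at sparsity $2s$; but the RIP for such systems requires sparsity \emph{in the orthonormalized coordinates}, and $G^{-1}e_i$ (equivalently, $Gx$ for sparse $x$) is in general not sparse. The property you lean on to repair this --- that quasi-diagonalization keeps $G^{-1}$ ``essentially diagonal'' so that $\|G^{-1}e_i\|_1\lesssim\|G^{-1}\|$ --- is not among the hypotheses: Theorem~\ref{thm-main-result} assumes only the coherence bound, $f_\nu\ge c_\nu$, and invertibility of $G$; quasi-diagonalization (Definition~\ref{def-qd}) is introduced only later, as a tool to \emph{bound} $\|G^{-1}\|$, and even there it is a two-sided norm equivalence $\|Gx\|_2^2\asymp\sum_j 2^{-2bj}|x_{j,n}|^2$, which says nothing about off-diagonal decay or $\ell^1$ bounds on the columns of $G^{-1}$. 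The paper avoids the orthonormalization entirely: it proves the g-RIP directly on the set $\cB=\{x:\|Gx\|_2\le 1,\ \|x\|_{0,\omega}\le\lambda\}$ by a Dudley/chaining argument, using only the coherence bound, Cauchy--Schwarz over the support, and $\|x\|_2\le\|G^{-1}\|\,\|Gx\|_2$ (Lemma~\ref{lem-thm-samp-grip-tauest}); no claim about $G$ preserving sparsity is ever needed. The same objection applies to your Step~3 appeal to ``off-diagonal decay of $\Phi F^*F\Phi^*$'' to control cross terms: the paper's g-RIP~$\Rightarrow$~RNSP proof is a bucketing argument with no polarization and no cross terms.

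Second, and more fundamentally, your calibration $\delta\sim\|G^{-1}\|^{-1}$ cannot produce a null-space ratio $\rho<1$. Running the bucketing argument with a g-RIP at sparsity level $\asymp s$ yields a ratio of the form $\frac{\sqrt{1+\delta}}{\sqrt{1-\delta}}\,\|G^{-1}\|\,\|G\|\cdot\frac{2\sqrt{s}}{\sqrt{\tilde\lambda}}$ with bucket size $\tilde\lambda\asymp s$: the condition number $\|G^{-1}\|\,\|G\|\ge 1$ enters multiplicatively and \emph{independently of} $\delta$, so shrinking $\delta$ does not help (equivalently, the asymmetric RIP with constants $(1-\delta)\sigma_{\min}(G)^2$ and $(1+\delta)\sigma_{\max}(G)^2$ has an upper-to-lower ratio bounded below by $\sigma_{\max}^2/\sigma_{\min}^2$ no matter how small $\delta$ is). The paper's fix is different: it keeps $\delta=1/2$ fixed and instead \emph{inflates the sparsity level of the g-RIP} to $\lambda\asymp\|G^{-1}\|^2\|G\|^2 s$ (condition \eqref{eq-thm-grip-rnsp-02}), which enlarges the buckets and makes the ratio equal to $\rho'<1/2$. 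Since the sample complexity of the g-RIP scales as $m\gtrsim B^2\|G^{-1}\|^2\lambda$, this inflation of $\lambda$ --- not a small $\delta$ --- is the actual source of the $\|G^{-1}\|^4$ in the final sample count. As written, your argument would either fail to establish the RNSP (if $\lambda\asymp s$) or, if repaired by inflating $\lambda$ as the paper does, would render the $\delta\sim\|G^{-1}\|^{-1}$ step superfluous.
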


Most of the subsequent efforts are directed towards keeping the quantity $\| G^{-1}\|$ under control, in light of its role in the previous result. In Section \ref{subsec-exp-est} we discuss several strategies to provide explicit bounds for $\|G^{-1}\|$, ultimately in connection with the singular values of $F\Phi^* P_\Lambda^*$. In the case where the singular values are not accessible, we introduce a workaround relying on a compatibility condition that is typically met in practice, namely that the measurement operator of the IP approximately acts as a diagonal operator on $(\phi_i)_{i\in\Gamma}$, similarly to the wavelet-vaguelette decomposition \cite{Do}. Roughly speaking, the \textit{quasi-diagonalization} condition introduced in Definition \ref{def-qd} captures how the forward map affects the sparsity of the signal, hence it allows us to obtain bounds for $\|G^{-1}\|$ by mimicking a singular value decomposition.

Moreover, in the spirit of statistical inverse learning \cite{blanchard,bubba-burger-helin-ratti-2021,bubba-ratti-2022} (see Section \ref{sec-relres} below for a brief account), it is possible to further optimize the recovery estimates to make them dependent only on the parameters that can be tuned by the observer, namely the noise level $\beta$ and the number of samples $m$. More precisely, under suitable conditions on the linear and nonlinear approximation errors of $u^\dagger$ with respect to the dictionary $(\Phi_i)_i$ (typically met if $u^\dagger$ belongs to certain classes of signals, such as cartoon-like images -- see the end of $\S$\ref{subsec:sparse-radon} for more details), provided that $m$ is suitably chosen as a function of the noise, we obtain estimates of the form
\[
\|x^{\dagger}-\widehat{x}\|_2 \leq c\beta^\alpha,\qquad \|x^{\dagger}-\widehat{x}\|_2 \leq c\frac{1}{m^{\alpha'}},
\]
up to logarithmic factors, for a suitable constant $c>0$. Here, $\alpha,\alpha'>0$ depend explicitly on the linear and nonlinear approximation errorrates and on the degree of ill-posedness of the forward map $F$. The above estimates hold with high probability, but it would be possible to derive similar estimates for the error in expectation.

We address the reader to Section \ref{sec-main res} for further details and comments on these results, as well as to Sections \ref{sec-rnsp}, \ref{sec-grip} and \ref{sec-sampl} for a detailed exposition of how the standard techniques of CS (in particular, the restricted isometry property and its consequences) must be generalized in order to accommodate the difficulties arising in ill-posed IP. 

\subsubsection{The sparse Radon transform}
\label{subsec:sparse-radon}
In Section~\ref{sec-radon} we discuss how recovery results from finite samples can be obtained for the inversion of the sparse angle Radon transform. Precisely, we consider the reconstruction of a signal supported in the unit ball in $\bR^2$ from samples of the Radon transform along angles $\theta\in[0,2\pi)$.

Recall that the Radon transform along the angle $\theta\in[0,2\pi)$ of a square integrable signal $u$ with compact support in the plane is defined by 
    \[ \cR_\theta{u}(s) \coloneqq \int_{e_\theta^{\perp}} u(y+s e_\theta) \diff{y}, \quad s \in \bR, \]
$\diff{y}$ being the 1D Lebesgue measure on the slice $\theta^{\perp}$. The full Radon transform $\cR u \in L^2([0,2\pi)\times\bR)$ is defined by $\cR{u}(s,\theta) = \cR_\theta{u}(s)$.

Let us describe the dictionary with respect to which the unknown signal $u^{\dagger}\in L^2(\cB_1)$ is assumed to be sparse/compressible. We consider an orthonormal basis of suitable 2D wavelets $(\phi_{j,n})_{(j,n)\in\Gamma}$ with compact support. We assume that there exists $\Lambda \subset \Gamma$ of size $|\Lambda|=M$ such that $\supp x^\dagger \subset \Lambda$ (recall that $x^\dagger=\Phi u^\dagger$) -- hence $\|P_\Lambda^\perp x^\dagger\|_2=0$. In particular, it is natural to assume $\Lambda = \Lambda_{j_0} =\{(j,n) \in \Gamma : j \le j_0\}$ for some $j_0 \in \bN$, namely $u^\dagger$ belongs to a subspace of signals with finite, finest scale $j_0$ in the multiscale decomposition associated with the wavelet basis.

Our goal is to provide recovery estimates for $u^\dagger$ given $m$ samples of the corresponding Radon transform at different angles $\theta_1,\dots,\theta_m\in\bS^1$, drawn i.i.d.\ uniformly at random from $\bS^1$. We consider here only the parallel-beam setting, the fan-beam sampling pattern is discussed in Section~\ref{sec-radon}.

Setting $W=\diag (2^{j/2})_{j,n}$, the nonlinear recovery of $u$ comes through the solution of the minimization problem 
\begin{equation}\label{eq-minpbm-radon-intro}
          \min_u \|W^{-1}\Phi{u}\|_1\quad\colon\quad \frac1m\sum_{k=1}^m\|\cR_{\theta_k}u-y_k\|_{L^2(\bR)}^2 \leq
        \beta^2, 
\end{equation} 
where the minimum is taken over $\Span(\phi_{j,n})_{(j,n)\in\Lambda_{j_0}}$, the sparse-angle measurements are given by $y_k=\cR_{\theta_k}u^\dagger+\varepsilon_k$, and the noise $\varepsilon\in \lc L^2(\bR)\rc^m$ satisfies the uniform bound $\|\varepsilon_k\|_{ L^2(\bR)} \leq \beta$ for every $k$. 

A simplified form of our main result (Theorem~\ref{thm:radon-dir-prob}) on the sparse Radon reconstruction  reads as follows. 

\begin{theorem*} Under the assumptions outlined above, there exist constants $C_0,C_1,C_2>0$ depending only on the wavelet basis such that, for any $2\le s \le  |\Lambda_{j_0}|$, if $\widehat{u}\in\Span(\phi_{j,n})_{(j,n)\in\Lambda_{j_0}}$ is a solution of the minimization problem \eqref{eq-minpbm-radon-intro} and the number of samples satisfies 
\[ m \geq C_0 s,\] 
up to logarithmic factors, then the following recovery estimate holds with overwhelming probability:
    \begin{equation}\label{eq-recbound-radon-intro} \|u^{\dagger}-\widehat{u}\|_{L^2} \leq
        C_1 2^{j_0/2}\frac{\sigma_s(W^{-1}\Phi{u^{\dagger}})_1}{\sqrt{s}} + C_22^{j_0/2}\beta.  \end{equation}
\end{theorem*}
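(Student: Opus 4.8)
The plan is to reduce \eqref{eq-minpbm-radon-intro} to the abstract result Theorem~\ref{thm-main-result}, and then to verify its two structural hypotheses --- the coherence bound and the control of $\|G^{-1}\|$ --- for the Radon transform tested against compactly supported wavelets. First I would take $\cH_1=L^2(\cB_1)$, $\cHt=L^2(\bR)$, $\cD=\bS^1$ with $\mu=\nu$ the uniform probability measure (so that $f_\nu$ is constant and $c_\nu\asymp1$), reference dictionary the given wavelet basis $(\phi_{j,n})_{(j,n)\in\Gamma}$, sparsity set $\Lambda=\Lambda_{j_0}$ (so that $P_\Lambda^\perp x^\dagger=0$ and all relevant quantities are finite-dimensional), and, in order to absorb the weight $W$, forward operator $F_\theta:=\cR_\theta\circ(\Phi^*WP_\Lambda\Phi)$. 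With the identification $x:=W^{-1}\Phi u$, the minimization \eqref{eq-minpbm-radon-intro} is exactly an instance of \eqref{eq-minpbm-intro}, and a recovery bound $\|x^\dagger-\widehat x\|_2\le(\cdots)$ for it yields \eqref{eq-recbound-radon-intro} through $\|u^\dagger-\widehat u\|_{L^2}=\|W(x^\dagger-\widehat x)\|_2\le\|WP_{\Lambda_{j_0}}\|\,\|x^\dagger-\widehat x\|_2=2^{j_0/2}\|x^\dagger-\widehat x\|_2$, the quantity $\sigma_s(W^{-1}\Phi u^\dagger)_1$ being precisely the best $s$-term error of $x^\dagger$. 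A short computation using $\Phi\Phi^*=I$ shows that here $G^2=P_{\Lambda_{j_0}}W\Phi\,\cR^*\cR\,\Phi^*WP_{\Lambda_{j_0}}$.

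The coherence bound amounts to $\|F_\theta\phi_{j,n}\|_{L^2(\bR)}=\|\cR_\theta(2^{j/2}\phi_{j,n})\|_{L^2(\bR)}\le B$ uniformly in $\theta,j,n$, with $B$ depending only on the wavelet basis. This follows at once from the elementary estimate $\|\cR_\theta\phi_{j,n}\|_{L^2(\bR)}\lesssim2^{-j/2}$: the wavelet $\phi_{j,n}$ is supported in a set of diameter $\lesssim2^{-j}$ and satisfies $\|\phi_{j,n}\|_{L^\infty}\lesssim2^{j}$ (since $\|\phi_{j,n}\|_{L^2}=1$ while $|\supp\phi_{j,n}|\lesssim2^{-2j}$), so that each line integral defining $\cR_\theta\phi_{j,n}(s)$ is $\lesssim2^{j}\cdot2^{-j}\lesssim1$, while $\cR_\theta\phi_{j,n}$ is supported in an interval of length $\lesssim2^{-j}$ in $s$.

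The heart of the matter is the bound $\|G^{-1}\|\le C$ with $C$ depending only on the wavelet basis. I would obtain it from two facts. First, the classical half-order smoothing of the planar Radon transform: on functions supported in $\cB_1$ the normal operator $\cR^*\cR$ coincides, up to a positive multiplicative constant, with the Fourier multiplier $|\xi|^{-1}$ on $\bR^2$, equivalently with the Riesz potential of kernel $c|x-y|^{-1}$, so that $\|\cR u\|_{L^2(\bS^1\times\bR)}\asymp\|u\|_{H^{-1/2}}$ on such $u$ (see \cite{natterer}). Second, the quasi-diagonalization (Definition~\ref{def-qd}) of this negative-order operator by compactly supported wavelets with sufficiently many vanishing moments: the matrix $[\langle\phi_{j,n},\cR^*\cR\phi_{j',n'}\rangle]$ is almost diagonal, with diagonal entries $\asymp2^{-j}$ and rapid off-diagonal decay in the differences of scales and positions, uniformly in the scales --- this rests on the standard almost-orthogonality estimates for the Riesz kernel tested against the localized, oscillating wavelets. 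Conjugating by $W=\diag(2^{j/2})$ then balances the diagonal weights exactly, so that $W\Phi\,\cR^*\cR\,\Phi^*W$ restricted to $\mathrm{ran}\,P_{\Lambda_{j_0}}$ is bounded and boundedly invertible with bounds independent of $j_0$; the estimates of Section~\ref{subsec-exp-est} relating quasi-diagonalization to $\|G^{-1}\|$ then give $\|G^{-1}\|\le C$. Feeding $B\asymp1$, $\|G^{-1}\|\asymp1$ and $c_\nu\asymp1$ into Theorem~\ref{thm-main-result} yields the sample complexity $m\gtrsim s$ (up to logarithmic factors) together with $\|x^\dagger-\widehat x\|_2\lesssim\sigma_s(x^\dagger)_1/\sqrt s+\beta$ with overwhelming probability; combined with the conversion of the first paragraph, this is \eqref{eq-recbound-radon-intro}.

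The main obstacle is the second ingredient above: proving that compactly supported wavelets quasi-diagonalize the Radon normal operator with the precise scaling $2^{-j}$, uniformly over all scales $j\le j_0$ --- including the coupling between far-apart scales and the behaviour of the boundary-adapted coarse-scale wavelets --- and with constants that do not degenerate as $j_0\to\infty$. This requires the almost-orthogonality estimates for the Riesz kernel $|x-y|^{-1}$ against the wavelets, exploiting their localization, smoothness and vanishing moments, together with some care in the bookkeeping. The remaining ingredients (the change of variables, the coherence estimate, and the appeal to the abstract theorem) are comparatively routine.
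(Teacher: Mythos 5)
Your proposal is correct in substance and arrives at the right estimate, but it is packaged differently from the paper. You absorb the weight $W=\diag(2^{j/2})$ into the forward operator, setting $F_\theta=\cR_\theta\Phi^*W\iota_{\Lambda_{j_0}}$, so that the coherence becomes uniform ($\|F_\theta e_{j,n}\|\lesssim 2^{j/2}\cdot 2^{-j/2}=1$) and $G$ becomes well conditioned ($\|Gx\|^2=\|\cR\Phi^*Wx\|^2\asymp\sum_j 2^{-j}2^{j}|x_{j,n}|^2=\|x\|_2^2$), and you then invoke the base result Theorem~\ref{thm-main-result} with $B\asymp\|G^{\pm1}\|\asymp c_\nu\asymp1$. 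The paper instead records the scale-dependent coherence $\|\cR_\theta\phi_{j,n}\|\lesssim 2^{-j/2}$ as an instance of Assumption~\ref{ass-coherence-bound-D} with $d_{j,n}=2^{j/2}$ and runs it through Theorem~\ref{thm:interpolation} with $\zeta=1$ (Corollary~\ref{cor:dir-recovery}), using the diagonal invariance of the g-RIP (Remark~\ref{rem-diag-inv}) to perform exactly the rescaling you carry out by hand. The two routes are mathematically the same rescaling idea; yours is more self-contained for this single application, while the paper's keeps the whole family of trade-offs $\zeta\in[0,1]$ and the truncation term $r>0$ available in one statement. Your final conversion $\|u^\dagger-\widehat u\|_{L^2}\le\|W\|\,\|x^\dagger-\widehat x\|_2=2^{j_0/2}\|x^\dagger-\widehat x\|_2$ matches the paper's.

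Two smaller points. First, what you single out as the main obstacle --- almost-orthogonality estimates for the Riesz kernel against wavelets, uniformly across scales --- is not needed: once you have $\|\cR u\|_{L^2}\asymp\|u\|_{H^{-1/2}}$ (Lemma~\ref{thm-radon-quasidiag}), the quasi-diagonalization follows from the standard Littlewood--Paley characterization of $H^{-1/2}$ by sufficiently regular compactly supported wavelets (Proposition~\ref{prop-cond-quasi-diag} and Remark~\ref{rem-regwav-quasidiag}, citing Meyer); no analysis of the off-diagonal entries of $[\langle\phi_{j,n},\cR^*\cR\phi_{j',n'}\rangle]$ is required. Second, your justification of $\|\phi_{j,n}\|_{L^\infty}\lesssim 2^{j}$ from $\|\phi_{j,n}\|_{L^2}=1$ and $|\supp\phi_{j,n}|\lesssim 2^{-2j}$ is backwards (those facts only give a lower bound on the sup norm); the bound is nevertheless true by the explicit dilation structure $\phi_{j,n}=2^{j}\phi(2^{j}\cdot-n)$, and the paper's Cauchy--Schwarz argument for $\|\cR_\theta\phi_{j,n}\|_{L^2}\lesssim 2^{-j/2}$ avoids the $L^\infty$ bound altogether.
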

Let us emphasize that the use of weights allows us to obtain a recovery estimate with a smaller number of samples than the ones required when the abstract result above is applied to the Radon setting -- see Theorem \ref{thm:radon-main-thm}.

Finally, we wish to provide a brief exposition of the result proved in Theorem~\ref{thm:radon-exp-est}. If we relax the assumption $\|P_{\Lambda_{j_0}}^\perp x^\dagger \|_2=0$ to a tail bound $\| P_{\Lambda_{j_0}}^\perp x^\dagger\|_2 \le C2^{-aj_0}$ for some $a,C>0$, and the signal $u^\dagger$ satisfies the compressibility condition $\sigma_s(\Phi{u^{\dagger}})_1 \leq C s^{1/2-p}$  for some $p>0$, then the recovery estimates reduce to 
\[  \| \widehat{u} - u^\dagger \|_{L^2} \leq c \beta^{\frac{2a}{2a+1}}, \qquad  \|u^{\dagger}-\widehat{u}\|_{L^2} \leq c \lc\frac{ 1 }{m}\rc^{ \frac{ap}{a+p}}, \]
up to logarithmic factors. These estimates hold under the assumption that the noise is deterministic. Assuming the noise to be a random variable, under natural assumptions (e.g., see \cite{blanchard}) it would be possible to obtain more refined estimates in expectation.

This result is particularly useful for signals from classes with known regularity with respect to a multiresolution dictionary. For instance, it is known that such sparsity constraints in wavelet space with $a=p=1/2$ corresponds to the class of \textit{cartoon-like images}, consisting of 2D piecewise smooth functions apart from mild discontinuities along curves \cite{donoho_cartoon,kuty_cartoon,Ma}. As a result, the reconstruction of a cartoon-like image $u^\dagger$ from $m\asymp \beta^{-2}$ (see Theorem~\ref{thm:radon-exp-est}) noisy samples of the Radon transform at different angles via sparse minimization as above is successful with an error $\| \widehat{u}-u^\dagger\|_{L^2} \leq C\beta^{1/2}$.

\subsection{Related research} 
\label{sec-relres}

Let us briefly discuss other works in the literature that share some common features with the present contribution.

Several approaches to obtain recovery guarantees for sampling and reconstruction in infinite-dimension can be found in the literature. The framework of \textit{generalized sampling} was developed in \cite{AHP,AHP2} to obtain guarantees in separable Hilbert spaces. This setting allows one to recover an element of a finite-dimensional subspace given finitely many inner products with the elements of an arbitrary frame. The issues related to the stability of the reconstruction were addressed by introducing the \textit{balancing property} \cite{adcock2016generalized}, which is well suited in the case of measurements modelled by a unitary operator. Several works have investigated this issue for a variety of nonlinear problems in different scenarios -- we mention \cite{alberti2018,Harrach_2019,alberti2022infinite,alberti-arroyo-santacesaria-2023} in this regard.

One of the main results in this paper (Theorem~\ref{thm-main-result}) generalizes several previously-known coherence-based CS results in finite \cite{candes2007sparsity} and infinite dimension \cite{herrolz_ip,adcock2016generalized,AHC,alberti2017infinite,AAH} in many respects. Indeed, our theory is able to deal with linear ill-posed problems, generally involving compact operators with anisotropic and possibly vector-valued measurements. Since our framework is based on a very weak definition of the sampling/measurement map, which is able to encompass at once both scalar products and pointwise evaluations, this result also intersects the scope of approximation theory, especially in connection with sparse approximation with polynomials \cite{RW} and sparse high-dimensional function approximation \cite{adcock_sparsebook,BDJR,2019-dexter-tran-webster,adcock2022efficient}.

The main notion that allows us to handle the ill-posedness of the forward map and the geometry of the measurements is the \textit{generalized restricted isometry property} (g-RIP) (see Section~\ref{sec-grip}), an extension of the classical \textit{restricted isometry property} (RIP), which has been widely studied in CS due to its fundamental relevance -- see for instance \cite{CT, CRT, CanRIP}. A number of variations on the theme can be found in the literature. In particular, we mention the D-RIP \cite{CYNR,KNW}, which takes into account possible redundancy in the dictionaries; the RIPL \cite{BasHan}, which is suitable for signals with sparsity in levels; and the G-RIPL \cite{AAH}, which is suited to deal with infinite-dimensional signals. The g-RIP considered in this work corresponds to the G-RIPL without levels, and with a weighted sparsity. The role of the matrix $G$ is slightly different in this work, where the main objective is to take into account the ill-posedness of the inverse problem. Alternative approaches to the RIP in CS have been proposed in the literature -- see for instance \cite{BRT, CP}. In \cite{grasmair} an interesting comparison is carried out between the RIP and the weaker notion of \textit{source conditions}; however, it might be difficult to verify if the latter are satisfied in practical scenarios.

The g-RIP is employed here to exploit an approximate diagonalizability condition that is met in many cases of interest in applications -- see Definition \ref{def-qd} of \textit{quasi-diagonalizability}. In fact, similar approaches have already appeared in the literature, a prominent example being the wavelet-vaguelette decomposition (WVD) introduced in \cite{Do} and further developed in \cite{CD-curv-radon, CGL} for more general dictionaries. Let us also point out that in \cite{herrolz_ip} the authors exploit diagonalization properties related to a WVD of the forward map in order to obtain recovery estimates, although the setting and the results differ in several respects from our approach. Moreover, the exact diagonalizability assumption of \cite{herrolz_ip} is too strict for most cases of interest, especially for the sparse Radon transform discussed in this work.

We were able to derive refined estimates in Theorem \ref{thm:interpolation} by leveraging a notion of coherence across scales that is tailored to the multiscale structure of the sparsifying dictionary $(\phi_{j,n})_{(j,n) \in \Gamma}$ -- see Assumption \ref{ass-coherence-bound-D}. Alternative approaches that use multilevel information on coherence are available in the literature. For instance, nonuniform coherence patterns were already observed in some works on MRI \cite{Lustig-et-al, LDP, LDSP}, and have been considered together with multilevel sampling schemes \cite{TD, AAH, AHC, LA}. Our approach considers a modified notion of coherence -- see Remark \ref{rem:rel-coherence} -- that extends these approaches by taking into account the presence of ill-posedness quantified by a certain decay of the singular values of the forward map.

The main application of our abstract framework -- the inversion of the sparse Radon transform -- is a well-known challenge for the IP community. A sparsity-promoting method with respect to wavelets based on a Besov-norm penalty has been considered in \cite{sparse-tomography-2013}, where numerical results are also presented, showing substantial improvement with respect to more classical approaches such as filtered back-projection (FBP) and Tikhonov regularization. Other wavelet-like dictionaries (e.g., shearlets) have been used in the literature to tackle the problem of reconstructing a signal in tomography from sparse data -- see \cite{BSLMP, PKKRNSKS, Bubba_2020}. The reconstruction in \cite{dossal-duval-poon-2017} is performed via sampling in the frequency domain along lines of a signal that is given by a linear combination of Dirac deltas. Under suitable assumptions, it is proved that the number of samples required for exact reconstruction is proportional to the number of deltas. In \cite{jorgensen-sidky-2015, Jorg2}, a numerical study is pursued to investigate how many samples are needed for accurate reconstruction in terms of the sparsity of tomographic images, where sparsity is meant here as the number of nonzero pixels. The numerical experiments presented show that the number of scalar samples required for accurate reconstruction is proportional to the sparsity of the signal.

Finally, we mention an alternative approach to derive recovery guarantees for IP that are similar in spirit  to those presented in this paper, which is given by statistical inverse learning theory, a field that lies at the interface  between IP and statistical learning. In a nutshell, the formulation of the  problem goes as follows. Let $\cH_1$ and $\cH_2$ be Hilbert spaces, $\cH_2$ consisting of functions defined on some input space $(\cX,\nu)$, where $\nu$ is a probability distribution on $\cX$. Let $F\colon\cH_1\rightarrow\cH_2$ be a linear operator. Given $u\in\cH_1$, if $X_1,\dots,X_m\stackrel{\text{i.i.d.}}{\sim}\nu$, consider
\begin{align*}
    Y_k = Fu(X_k)+\varepsilon_k,\qquad k=1,\dots,m,
\end{align*}
where $\varepsilon_k$ are independent noise random variables. The goal is to recover $u$ from the observed values $Y_1,\dots,Y_m$. The case $F=\mathrm{Id}$ reduces to the classical nonparametric regression problem. Reconstruction estimates in this scenario are obtained, for instance, in \cite{blanchard,rastogi,RBM,2020-giordano-nickl}.

\subsection{Discussion and open questions}

Let us discuss here some limitations of our approach and potential directions for future research.
\begin{itemize}
    \item The methods presented in this paper, as well as most of the literature on CS (for some exceptions on specific models, see \cite{ohlsson-2012,blumensath-2013,2017-CS-Generative,gilbertNonlinearIterativeHard2020,berk-etal-2022}), are suited to treat only linear problems, and so are applicable only to linear inverse problems. Nonlinear inverse problems appear in many domains, especially in parameter identification problems for partial differential equations \cite{2011-kirsch,2017-isakov}. Notable examples are the Calderón problem for electrical impedance tomography \cite{2002-borcea,2009-uhlmann} and the inverse scattering problem \cite{2013-colton-kress,MR3601119}. It would be interesting to develop a theory of nonlinear CS to handle these nonlinear problems.
    
    \item We consider a fixed deterministic noise $\varepsilon\in \cH_2^m$, representing a worst case scenario. A common model used in statistical inverse problems \cite{kaipio-somersalo-2007,stuart-2010} and in statistical inverse learning theory \cite{blanchard,2020-giordano-nickl} considers statistical noise, namely, each $\varepsilon_k$ is sampled i.i.d.\ following a probability distribution on $\cH_2$. Understanding how this different setting would affect our method and our reconstruction estimates is an intriguing issue. In particular, since our approach allows us to use variable-density sampling strategies, it would be interesting to study optimality rates for a class of probability distributions.

    \item In this work, we assume that the unknown signal $u^\dagger$ is sparse with respect to an orthonormal basis $(\phi_i)_{i \in \Gamma}$ of $\cH_1$. We expect that the whole machinery would work also under relaxed assumptions, for instance, if $(\phi_i)_{i \in \Gamma}$ is a frame and/or a Riesz basis, as in  \cite{poon-2017,alberti2017infinite,BDJR}. Such generalizations would allow us to consider dictionaries that are more general than wavelet orthonormal bases, such as curvelets or shearlets, which are known to provide better rates for the nonlinear approximation error \cite{CD,GL,kuty_cartoon}. As a consequence, this is expected to give better sample complexity estimates. We leave this extension for future research.

     \item In the model of the sparse Radon transform we considered a finite number of angles $\theta_k$ but, for each fixed angle, we sample the Radon transform for every translation $s\in [-1,1]$. As such, this is a semi-discrete model for the measurements, and it is natural to wonder whether the same estimates, or similar ones,  hold true with a sampling also in the  variable $s$. The work \cite{dossal-duval-poon-2017} contains some results in this direction, but with sampling in frequency and and in the setting of Dirac delta spikes.

    \item We have opted to present only one application (the sparse Radon transform) of the abstract sample complexity result for inverse problems. However, the proposed approach is  flexible enough to encompass a number of classical problems, including the recovery of wavelet coefficients of a $L^2$ signal from its samples in the Fourier domain, as well as deconvolution problems with finite samples. To the best of our knowledge, this is the first time that IP so different in nature (namely, involving ``discrete" and ``continuous" samples) can be handled at one time by specializing the same abstract framework.  We decided to postpone this analysis to a separate contribution.
\end{itemize}

\subsection{Structure of the paper} Let us briefly illustrate the organization of the contents.  In Section \ref{sec-sec-setting} we offer a detailed treatment of the preliminary notions that are necessary to frame and state the main results. After such a preparation, in Section \ref{sec-main res} we formulate our general result for the sample complexity of ill-posed IP under sparsity constraints. The main application is the sparse angle tomography problem, which is thoroughly discussed in Section \ref{sec-radon}. All the proofs of both main and ancillary results can be found in Section \ref{sec-proofs}, with the exception of the proof of a key technical result (Theorem~\ref{thm-samp-grip}), which can be found in Appendix~\ref{appendix:proofs}.

\section{Setup}\label{sec-sec-setting}

\subsection{Weighted setup and notation}
\label{sec-weight-set}
Let $\bN$ denote the set of positive integer numbers. For $n \in \bN$, we denote  the set of the positive integers up to $n$ by $[n]$, namely $[n]=\{1, \ldots, n\}$.

Let $\Gamma$ be a finite or countable index set, e.g., $\Gamma=\bN$ or $\Gamma\subseteq\bN\times\bZ$. In our setting, it is the index set of the dictionary representing the unknown signals. We will consider also a finite subset of indices $\Lambda \subseteq \Gamma$, which stands for the index set where the signal reconstruction is carried out. We denote by $P_\Lambda$ the orthogonal projection on $\ell^2(\Gamma)$ defined by \[ (P_\Lambda x)_i = \begin{cases} x_i & (i \in \Lambda) \\ 0 & (i \notin \Lambda) \end{cases}. \] The image of $P_\Lambda$ is thus $\ell^2_\Lambda(\Gamma) \coloneqq \mathrm{span} \{ e_i : i \in \Lambda \}$, $(e_i)_{i \in \Gamma}$ being the  canonical basis of $\ell^2(\Gamma)$. With an abuse of notation, we often identify $\ell^2_\Lambda(\Gamma)$ with $\ell^2(\Lambda)$ or with $\bC^M$, where $M=|\Lambda|$. We denote the corresponding adjoint map by $\iota_\Lambda$, that is the canonical embedding $\ell^2_\Lambda(\Gamma) \to \ell^2(\Gamma)$. We also set $P_\Lambda^\bot \coloneqq I - P_\Lambda$, where $I$ is the identity operator.

We say that a sequence $\omega\in\bR^{\Gamma}$ is a vector of \textit{weights} if $\omega_i\geq 1$ for every $i\in \Gamma$. The \textit{weighted size} of a set $S\subseteq \Gamma$ is  defined by
\begin{equation}
    \omega(S) \coloneqq \sum_{i\in S} \omega_i^2,
\end{equation} whenever $S$ is finite. Denoting by $|S|$ the cardinality of a set $S$, the following inequalities are immediately verified:
\begin{equation}
    |S| \leq \omega(S) \leq |S|\sup_{i\in S}\omega_i^2.
\end{equation} 

For $0<p \le 2$ we introduce the set 
\[ \ell^p_\omega(\Gamma) \coloneqq \{ x \in \bC^\Gamma : \| x\|_{p,\omega} < \infty\}, \] where the \textit{$\omega$-weighted $\ell^p$} norm is defined by
\begin{equation}
    \|x\|_{p,\omega} \coloneqq \lc \sum_{i\in \Gamma} |x_i|^p \omega_i^{2-p}\rc^{1/p}.
\end{equation} 
The limit case $p\to 0$ leads to the notion of \textit{weighted sparsity}. To be precise, denoting  the support of $x \in \bC^\Gamma$ by $\supp (x) = \{ i \in \Gamma : x_i\ne 0\}$, we say that $x$ is $s$-$\omega$-sparse if
\[ \| x \|_{0,\omega} \coloneqq \omega(\supp x) \le s. \] Note that in the unweighted case (i.e., $\omega_i = 1$ for all $i\in \Gamma$) we obtain the usual $\ell^p$ norm $\|x \|_p$ and the cardinality of the support $\|x\|_0 = |\supp x|$.

We also introduce the \textit{error of best $\omega$-weighted $s$-sparse approximation} of $x \in \ell^2(\Gamma)$ with respect to the weighted $\ell^p$ norm:
\[ \sigma_s(x)_{p,\omega} \coloneqq \inf \{\|x-y\|_{p,\omega} : y \in \bC^{\Gamma},  \|y\|_{0,\omega} \le s\}. \] 
Equivalently, we have that
\begin{equation}
    \sigma_s(x)_{p,\omega} =
    \inf\{ \|x_{S^c}\|_{p,\omega}\colon S\subset \Gamma,\ \omega(S)\leq s \},
\end{equation}
where $x_S \coloneqq P_S x$ and $S^c=\Gamma\setminus S$. If $\tilde{S}\subset \Gamma$ is a set such that $\sigma_s(x)_{p,\omega}=\|x_{\tilde{S}^c}\|_{p,\omega}$ and $\omega(\tilde S)\le s$, we say that $x_{\tilde{S}}$ is a \textit{best s-$\omega$-sparse approximation to $x$} with respect to $\ell_{\omega}^p$. Note that $x_{\tilde{S}}$ is not unique in general.

\subsection{Dictionaries, measurements and the minimization problem}\label{sec-setting}
The Hilbert spaces appearing below are assumed to be complex and separable.

\subsubsection*{Hilbert spaces} Let $\cH_1$ and $\cHt$ be Hilbert spaces. We will refer to $\cH_1$ as the space of signals and to $\cHt$ as the space of measurements.

\subsubsection*{Dictionary} Let $(\phi_i)_{i\in\Gamma}$ be an orthonormal basis of $\cH_1$.
Let $\Phi\colon\cH_1\rightarrow\ell^2(\Gamma)$ be the corresponding analysis operator, namely $\Phi u\coloneqq (\langle u,\phi_i \rangle_{\cH_1})_{i\in\Gamma}$. The corresponding synthesis operator $\Phi^*\colon\ell^2(\Gamma)\rightarrow\cH_1$ is given by $\Phi^*x=\sum_{i\in\Gamma} x_i\phi_i$.

\subsubsection*{Measurement space} Let $(\cD,\mu)$ be a measure space. We  think of $\cD$ as a space of parameters associated with the  measurements. Indeed, we  perform random sampling with respect to a probability measure $\nu$ on $\cD$ that is absolutely continuous with respect to $\mu$ -- namely, there exists a positive function $f_{\nu}\in L^1(\mu)$ such that $\|f_{\nu}\|_{L^1}=1$, and  $\diff{\nu}\coloneqq f_{\nu}\diff{\mu}$. The choice of the probability distribution turns out to be crucial in order to obtain an optimal sample complexity; however, it will not be very relevant for the purpose of the present work. We will investigate this matter further in future work.

\subsubsection*{Measurement operators and forward map}
Let $(F_t)_{t\in\cD}\colon\cH_1\rightarrow\cHt$ be a family of uniformly bounded linear maps, with $\|F_t\|\leq C_F$ for every $t\in\cD$, modelling the measurement operators in our problem. We suppose that
\begin{equation}
    \cD\ni t \mapsto F_t u\in\cHt
\end{equation}
belongs to the Bochner space $L_{\mu}^2(\cD;\cHt)$ for every $u\in\cH_1$. We define the forward map
\begin{equation}\label{def:forward}
    F\colon \cH_1\to L_\mu^2(\cD;\cHt),\qquad (Fu)(t)=F_tu,
\end{equation}
for almost every $t\in\cD$. We assume that $F$ is bounded with $\|F\|_{\cH_1\to L_\mu^2(\cD;\cHt)}\le C_F$.

The attentive reader will notice that assuming (non-uniform) boundedness of the measurement operators would affect only the estimates involving the truncation error -- see Proposition \ref{prop:truncation-error}. Nevertheless, as the uniform boundedness assumption is satisfied in the context of the applications discussed below, we prefer to trade off some generality for consistency.

\subsubsection*{Noise} 
Letting $u^\dagger \in \cH_1$ denote the unknown signal and if $t_1,\dots,t_m$ are i.i.d.\ samples from the distribution $\nu$ on $\cD$, the noisy measurements may be written as 
\begin{equation}\label{eq:A}
    y_k \coloneqq F_{t_k}u^\dagger +\varepsilon_k, \quad k=1,\dots,m,
\end{equation}
with $\varepsilon_k\in\cHt$. We assume the following uniform bound on the noise:
\begin{equation}\label{eq:noise_bounda}
\max_k \|\varepsilon_k\|_{\cHt}\leq\beta,
\end{equation}
for some $\beta\ge 0$.

\subsubsection*{Truncation error}
Let $\Lambda\subseteq\Gamma$ be finite. We assume that there exists $r\ge 0$ such that $\|P_{\Lambda}^{\perp}x^{\dagger}\|_2\leq r$, where $x^\dagger=\Phi u^\dagger$. The truncation error $\|P_{\Lambda}^{\perp}x^{\dagger}\|_2$ is often referred to as linear approximation error \cite{devore-1998}. We show below (see Example \ref{ex:cartoon-like}) that, if $u^\dagger$ and $\Phi$ are sufficiently regular in a sense to be specified, then the decay rate of  $\|P_{\Lambda}^{\perp}x^\dagger\|_2$ can be explicitly given in terms of $M=|\Lambda|$.

\subsubsection*{Minimization problem}
We consider the following minimization problem:
\begin{equation}
\label{eq-min-probl1}
    \min_{x\in\ell^2(\Lambda)} \|x\|_{1,\omega}\quad :\quad  \frac{1}{m} \sum_{k=1}^m \|F_{t_k}\Phi^*\iota_{\Lambda}x-y_k\|_{\cHt}^2 \leq \eta^2,
\end{equation}
for some $\eta\ge 0$, where we recall that $\iota_{\Lambda}=(P_{\Lambda})^*$. Since the problem involves the minimization of a convex functional under a quadratic constraint, there exists at least one solution -- unless the constraint condition is empty.

\begin{remark}\label{rem-an-vs-syn}
    The $\ell^1$ minimization problem \eqref{eq-min-probl1} is known in the literature as the \textit{synthesis formulation}. An alternative approach is the so-called \textit{analysis formulation}
    \[ 
        \min_{u} \|\Phi{u}\|_1\quad:\quad
        \frac{1}{m}\sum_{k=1}^m\|F_{t_k}u-y_k\|_{\cHt}^2\leq \eta^2,
    \]
    where the minimum is taken over $\Span(\phi_{i})_{i\in\Lambda}$. 
    
    For general dictionaries, the two approaches are not equivalent -- see for instance \cite{EMR}. However, equivalence holds for orthonormal bases, which is the case in the present work. Indeed, $\Phi$ and its inverse $\Phi^*$ are isometric isomorphisms of $\cH_1$ and $\ell^2(\Gamma)$. We adopt the synthesis approach in the statement of theorems of the current section as this formulation is more natural in the context of compressed sensing and the restricted isometry property -- see Section~\ref{sec-grip}. Instead, in Section \ref{sec-radon} we opt for the analysis approach, which best fits the spirit of inverse problems.
\end{remark}

\subsubsection*{Two examples}
We now briefly discuss two examples of problems that fit into our framework: the reconstruction of the wavelet coefficients of a sparse signal from Fourier measurements and the recovery of a sparse function in a basis of polynomials from a finite number of pointwise evaluations. Even though they have been thoroughly studied -- see, for instance, \cite[Section 12.1]{FR} -- our framework generalizes compressed sensing in bounded orthonormal systems in many respects, including the introduction of a possibly ill-posed forward map and the presence of more general vector-valued measurement operators.

\begin{example}[\textbf{Reconstruction of wavelet coefficients from Fourier samples}]
\label{ex-wavelet-fourier}
We consider the problem of reconstructing the wavelet coefficients of a signal supported in $\bT=[0,1]$ by sampling the Fourier coefficients in an appropriate bandwidth $[N]_\pm:=\{-N, -N+1, \ldots, N-1, N\}$; we refer the interested reader to \cite{CRT, LDSP, LDP}.

In this context, we have that $\cH_1= L^2(\bT)$ is the space of signals and $\cHt= \bC$ is the space of scalar measurements. We consider an orthonormal basis $(\phi_i)_{i\in\Gamma}= (\phi_{j,n})_{(j,n)\in\Gamma}$ of boundary (\cite{cohen_wave}) or periodic (\cite[Section~3.11]{Me}) Daubechies wavelets. 
The index set $\Gamma\subset\bN\times\bZ$  here includes all the possible scales, indexed by $j\in\bN$, and all the possible values of the translation parameter, indexed by $n\in\bZ$. In general, the range of $n$ depends on the scale $j$.

As measurement space for this model we consider $(\cD,\mu)=([N]_\pm,\fc)$, where $\fc$ is the counting measure. The forward map $F\colon L^2(\bT)\rightarrow\bC^{2N+1}$ is given by $F= P_N\cF$, where $\cF\colon L^2(\bT)\rightarrow\ell^2(\bZ)$ is the Fourier transform
\[
\cF u(t)=\int_{\bT} u(x)e^{-2\pi i xt }\,\diff x, \qquad t\in \bZ,
\]
and $P_N\colon\ell^2(\bZ)\rightarrow\bC^{2N+1}$ is the projection on the components indexed by $[N]_\pm$. For every $t\in[N]_\pm$, the measurement operators $F_t\colon L^2(\bT)\rightarrow\bC$ are given by $F_t u=\cF{u}(t)=Fu(t)$.  We choose $f_{\nu}(0)\coloneqq 1/C_{\nu}$ and $f_\nu(t) \coloneqq 1/(|t|C_{\nu})$ for $t\neq 0$, where $C_{\nu}\coloneqq1+\sum_{t=1}^N 2/t$ is a normalizing constant; see (i) in Remark \ref{rem-examples-coherence} for further insights on this choice.

The truncation bandwidth $\Lambda\subset\Gamma$ is the subset of indices $(j,n)$ such that $j\leq j_0$. In other words, we approximate the original signal with one having a finite finest scale $j_0$. 
\end{example}

\begin{example}[\textbf{Approximation with polynomials}]
\label{ex-poly}
We consider the classical problem of reconstructing a polynomial of degree at most $M$ that is sparse with respect to an orthonormal basis of polynomials, given a finite number of pointwise samples \cite{RW}.

Let $L^2_w(-1,1)$ be the space of square-integrable functions with respect to a weight $w\colon [-1,1]\to\bR_+$, where $w\diff{x}$ is a probability measure. Let $(p_i)_{i\in\bN}$ be an orthonormal basis of polynomials of $L^2_w(-1,1)$. We fix the dictionary $(\phi_i)_{i \in \Gamma} = (p_i)_{i\in \Gamma}$, $\Gamma = [M+1]$, and define the space of polynomials of degree at most $M$ by $\cP_{M} \coloneqq \Span(p_i)_{i\in\Gamma}\subset L^2_w(-1,1)$. The model under consideration is represented by $\cH_1=\cP_M$  and $\cHt=\bC$. 

The measurement space is given by $(\cD,\mu) = ([-1,1],w \diff{x})$. The forward map coincides with the identity ($F = I$) and, for every $t\in[-1,1]$, the measurement operators $F_t\colon\cP_{M}\rightarrow\bC$ are given by $F_t{u} = u(t)$. We choose $\Lambda=\Gamma$ and $f_\nu\equiv 1$, so that $\nu=\mu$.
\end{example}

\section{A general result on sample complexity}\label{sec-main res}

\subsection{Main result}

In order to state our main result we need to introduce a few assumptions.

\begin{assumption}\label{ass-coherence-bound}
The following coherence bound is satisfied:
\begin{equation}
    \|F_t\phi_i\|_{\cHt} \leq B\sqrt{f_{\nu}(t)}\,\omega_i,\quad t\in\cD,\ i\in\Gamma,
\end{equation}
for some $B\geq 1$, where $f_\nu$ was defined in Section~\ref{sec-setting} and $\omega_i\geq 1$.
\end{assumption}
\begin{remark}
\label{rem-examples-coherence}
Let us discuss the occurrence of this bound in the two examples already discussed.  
\begin{itemize}
    \item[\textit{(i)}] Consider the Fourier-wavelet sensing problem discussed in Example \ref{ex-wavelet-fourier}. It is known from \cite[Theorem 2.1]{JAH} that the following estimates hold under suitable regularity assumptions on the scaling functions of the wavelet dictionary:
\begin{align*}
    |\cF{\phi_{j,n}}(0)| &\leq C,\quad (j,n)\in\Gamma,\\
    |\cF{\phi_{j,n}}(t)| &\leq \frac{C}{\sqrt{|t|}},\quad t\in\bZ\setminus\{0\},\ (j,n)\in\Gamma
\end{align*}
for some $C\geq 1$. Letting $C_{\nu}\coloneqq 1 + \sum_{t=1}^N 2/t$, $f_{\nu}(0)=1/C_{\nu}$ and $f_{\nu}(t)=1/(|t|C_{\nu})$ for $t\neq 0$ and $B\coloneqq C\sqrt{C_{\nu}}$, then the above estimate can be recast in the form of Assumption \ref{ass-coherence-bound} with $\omega_i\equiv 1$.

 \item[\textit{(ii)}] Consider the problem in Example \ref{ex-poly}. In the case of approximation with Legendre polynomials $(p_i)_{i\in\bN}$ on $[-1,1]$ (see \cite{RW} or \cite[pag. 504]{Z}), we have
\begin{equation}
    \| p_i\|_{\infty} \leq \sqrt{2i-1},\quad i\in\bN.
\end{equation}
Assumption \ref{ass-coherence-bound} can thus be satisfied provided that the weights are chosen in accordance with the constraint $\omega_i\geq\sqrt{2i-1}$.
\end{itemize} 
\end{remark}

\begin{assumption}\label{ass-prob-low-est}
The following estimate is satisfied for some $c_{\nu}>0$:
\begin{equation}
    c_{\nu}\leq f_{\nu}\leq 1.
\end{equation}
\end{assumption}
\begin{remark}
The inequality is a rather technical assumption and, as will be clear from the proof of Theorem \ref{thm-main-result}, it is related to the fact that we suppose to have only a uniform bound on the noise level for the samples, see \eqref{eq:noise_bounda}. By strengthening the requirement on the noise, this assumption can be dropped, as will be shown in future work. We do not include this extension here as it is not relevant for the sparse Radon problem treated in this work.
\end{remark}

If $A$ is a matrix or an operator, we let $\|A\|$ denote its operator norm.

\begin{theorem}
\label{thm-main-result}
Consider the setting of Section \ref{sec-setting}. Let Assumptions \ref{ass-coherence-bound} and \ref{ass-prob-low-est} be satisfied. Set $M=|\Lambda|$ and define the following $M\times M$ matrix:
\begin{equation}
    G \coloneqq \sqrt{P_{\Lambda} \Phi F^*F\Phi^*\iota_{\Lambda}}.
\end{equation}
We assume that $G$ is invertible.

Let $x^{\dagger}\in\ell^2(\Gamma)$ be such that $\|P_{\Lambda}^{\perp}x^{\dagger}\|_2\leq r$. Consider $m$ i.i.d.\ samples $t_1,\dots,t_m\in\cD$ drawn from the probability distribution $\nu$. Let $y_k \coloneqq F_{t_k}\Phi^* x^{\dagger}+\varepsilon_k$, for $k=1,\dots,m$, with $\max_k \|\varepsilon_k\|_{\cHt}\leq \beta$.

There exist absolute constants $C_0,C_1,C_2>0$ such that the following holds. Let $\widehat{x}\in\ell^2(\Lambda)$ be a solution of the minimization problem
\begin{equation}
\label{eq-min-probl}
    \min_{x\in\ell^2(\Lambda)} \|x\|_{1,\omega}\quad :\quad \frac{1}{m}\sum_{k=1}^m \|F_{t_k}\Phi^*\iota_{\Lambda} x-y_k\|_{\cHt}^2 \leq \lc \beta + C_3\|G^{-1}\|^{-1}r \rc^2.
\end{equation} 
Fix $s \in [M]$ with $s\geq\|\omega\|_{\infty}^2/4$ and set $\tau\coloneqq B^2\|G^{-1}\|^4\|G\|^2 s$. For any $\gamma\in(0,1)$, if
\begin{equation}
    m \geq C_0\tau\max\{\log^3{\tau}\log{M},\log(1/\gamma)\},
\end{equation}
then, with probability exceeding $1-\gamma$, the following recovery estimate holds:
\begin{equation}
\label{eq-rec-est1}
    \|x^{\dagger}-\widehat{x}\|_{1,\omega} \leq C_1 \sigma_s(P_{\Lambda}x^{\dagger})_{1,\omega} + C_2 \sqrt{s} c_{\nu}^{-1/2} (\|G^{-1}\|\beta + C_3 r) + \|P_{\Lambda}^{\perp} x^{\dagger}\|_{1,\omega} ,
\end{equation}
\begin{equation}
\label{eq-rec-est2}
    \|x^{\dagger}-\widehat{x}\|_2 \leq C_1 \frac{\sigma_s(P_{\Lambda}x^{\dagger})_{1,\omega}}{\sqrt{s}} + C_2 c_{\nu}^{-1/2} (\|G^{-1}\|\beta + C_3 r),
\end{equation}
where $C_3\coloneqq c_{\nu}^{-1/2}C_4( \|F\Phi^*\iota_{\Gamma\setminus\Lambda}\| \|G^{-1}\| + 1)$, and $C_4$ depends only on $C_F$.
\end{theorem}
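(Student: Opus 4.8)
The plan is to reduce the statement to a standard compressed sensing recovery theorem for the weighted robust null space property (rNSP), applied to the renormalized measurement matrix, and then to control the two sources of error separately: the genuine noise $\beta$ and the truncation error $r$. First I would introduce the finite-dimensional measurement matrix $A \colon \ell^2(\Lambda) \to \cHt^m$ with rows (or blocks) $A_k x = \frac{1}{\sqrt{m}} F_{t_k}\Phi^*\iota_\Lambda x$, and the preconditioned matrix $\widetilde{A} = A G^{-1}$. The point of the matrix $G = \sqrt{P_\Lambda \Phi F^* F \Phi^* \iota_\Lambda}$ is precisely that $\bE[A^*A] = \int_\cD (F_t \Phi^* \iota_\Lambda)^* (F_t \Phi^* \iota_\Lambda)\, d\nu(t) = P_\Lambda \Phi F^* F \Phi^* \iota_\Lambda = G^2$ (using $d\nu = f_\nu d\mu$ and the density identity implicit in the setup), so that $\widetilde{A}$ has $\bE[\widetilde{A}^* \widetilde{A}] = I_\Lambda$, i.e.\ $\widetilde{A}$ is an isotropic sampling matrix with respect to the dictionary $(G^{-1}\text{-renormalized})$ vectors. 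The coherence bound of Assumption~\ref{ass-coherence-bound} together with $\|G^{-1}\|$ gives a bound on the (weighted) coherence of the columns of $\widetilde{A}$: each column is bounded in $\cHt$ by $B \|G^{-1}\| \omega_i \sqrt{f_\nu(t)}$, which is exactly the hypothesis needed for the sampling theorems of Section~\ref{sec-grip}/\ref{sec-sampl} (Theorem~\ref{thm-samp-grip}) to yield the weighted \GRIP{} for $\widetilde{A}$ with the stated sparsity $\tau = B^2 \|G^{-1}\|^4 \|G\|^2 s$ and the stated logarithmic sample bound on $m$.

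Next I would pass from the \GRIP{} of $\widetilde{A}$ to the weighted robust null space property, and then invoke the standard rNSP recovery estimate (the weighted analogue of the classical result, e.g.\ in the spirit of \cite{FR} Chapter~4 and \cite{RW}): if $\widehat{z}$ solves $\min \|z\|_{1,\omega}$ subject to $\|\widetilde{A} z - \widetilde{y}\|_2 \le \widetilde\eta$, then $\|z^\sharp - \widehat{z}\|_2 \lesssim \sigma_s(z^\sharp)_{1,\omega}/\sqrt{s} + \widetilde\eta$, with a matching $\ell^1_\omega$ estimate. Here I would take $z^\sharp = G P_\Lambda x^\dagger$ and $\widehat{z} = G \widehat{x}$; note $\|\widehat{x} - P_\Lambda x^\dagger\|_2 \le \|G^{-1}\| \|\widehat{z} - z^\sharp\|_2$, which is where the extra factor $\|G^{-1}\|$ multiplying $\beta$ in \eqref{eq-rec-est1}--\eqref{eq-rec-est2} comes from. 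The feasibility of $\widehat{z}$ for the preconditioned program follows because $\widehat x$ is feasible for \eqref{eq-min-probl} and $\widetilde{A}\widehat z = A \widehat x$. I would also need that the minimization in \eqref{eq-min-probl} and the preconditioned minimization have the same minimizer set, which is immediate since $G$ is an invertible change of variables on $\ell^2(\Lambda)$ and the objective $\|x\|_{1,\omega}$ is unchanged.

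The crux is estimating the effective noise level $\widetilde\eta$ in the preconditioned program and relating it back to $\beta$ and $r$. The residual for the true coefficient vector $x^\dagger$ in \eqref{eq-min-probl} is $\frac{1}{m}\sum_k \|F_{t_k}\Phi^*\iota_\Lambda P_\Lambda x^\dagger - y_k\|_{\cHt}^2$, and since $y_k = F_{t_k}\Phi^* x^\dagger + \varepsilon_k = F_{t_k}\Phi^*\iota_\Lambda P_\Lambda x^\dagger + F_{t_k}\Phi^*\iota_{\Gamma\setminus\Lambda} P_\Lambda^\perp x^\dagger + \varepsilon_k$, this residual is controlled by $\beta^2$ plus a term coming from $\frac1m \sum_k \|F_{t_k}\Phi^*\iota_{\Gamma\setminus\Lambda} P_\Lambda^\perp x^\dagger\|_{\cHt}^2$. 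In expectation the latter equals $\|F\Phi^*\iota_{\Gamma\setminus\Lambda} P_\Lambda^\perp x^\dagger\|^2 \le \|F\Phi^*\iota_{\Gamma\setminus\Lambda}\|^2 r^2$, but one must show it concentrates; this is exactly the content of a truncation-error proposition of the type referenced as Proposition~\ref{prop:truncation-error}, and it is where the lower density bound $f_\nu \ge c_\nu$ (Assumption~\ref{ass-prob-low-est}) and the definition of $C_3$ enter — the factor $c_\nu^{-1/2}$ appears because the uniform (rather than $L^2$-averaged) control of the empirical sum costs a factor $1/\sqrt{c_\nu}$. This makes $x^\dagger$ feasible for \eqref{eq-min-probl} with the chosen RHS $(\beta + C_3\|G^{-1}\|^{-1} r)^2$, hence $\widetilde\eta \lesssim c_\nu^{-1/2}(\beta + C_3\|G^{-1}\|^{-1}r)$ after accounting for the $1/\sqrt{m}$ normalization and the fact that the \GRIP{} gives a two-sided bound converting the empirical $\ell^2$ norm to $\|\widetilde A(\cdot)\|_2$ up to constants; multiplying through by $\|G^{-1}\|$ in the $\ell^2$ coefficient estimate reproduces the $C_2 c_\nu^{-1/2}(\|G^{-1}\|\beta + C_3 r)$ term. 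I expect the main obstacle to be precisely this concentration/feasibility bookkeeping: tracking how the three normalizations ($1/m$ in the constraint, $1/\sqrt m$ in $A$, and the $G^{-1}$ preconditioning) interact, ensuring the failure probabilities of the \GRIP{} event and of the truncation concentration combine into a single $1-\gamma$, and verifying that all constants are absolute (depending only on $C_F$ through $C_4$), so that no hidden dependence on $M$, $s$, or the operator norms sneaks in beyond what is displayed.
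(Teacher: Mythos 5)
Your overall architecture (sampling $\Rightarrow$ RIP-type property $\Rightarrow$ null space property $\Rightarrow$ distance bounds, plus a separate concentration argument for the truncation error and feasibility of $P_\Lambda x^\dagger$) matches the paper, and your treatment of the noise/truncation bookkeeping is essentially right. However, the central reduction — preconditioning to $\widetilde{A}=AG^{-1}$ and applying a standard rNSP recovery theorem to $z=Gx$ — has a genuine gap, in fact three related ones. First, the change of variables does not preserve the objective: if $\widehat{x}$ minimizes $\|x\|_{1,\omega}$ subject to the constraint, then $\widehat{z}=G\widehat{x}$ minimizes $\|G^{-1}z\|_{1,\omega}$, not $\|z\|_{1,\omega}$, so $\widehat{z}$ is \emph{not} a minimizer of the preconditioned program and you cannot invoke the standard recovery estimate for it; your claim that the two programs "have the same minimizer set" confuses invariance of the objective as a function of $x$ with invariance under the substitution $z=Gx$. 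Second, $G$ is not diagonal and does not preserve supports, so even if the reduction went through, the error term you would obtain is $\sigma_s(GP_\Lambda x^\dagger)_{1,\omega}$, which cannot be converted to $\sigma_s(P_\Lambda x^\dagger)_{1,\omega}$ without uncontrolled losses. Third, the coherence of the columns of $AG^{-1}$ is not $B\|G^{-1}\|\omega_i\sqrt{f_\nu(t)}$: the $i$-th column is a full linear combination $\sum_j (G^{-1})_{ji}F_t\phi_j$, and the triangle inequality yields an extra factor of order $\sqrt{M}$, which would ruin the sample complexity.

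The paper avoids all three problems by never changing variables: it defines the generalized RIP directly as $(1-\delta)\|Gx\|_2^2\le\|QAx\|_{\cH^m}^2\le(1+\delta)\|Gx\|_2^2$ for $x$ that are $\lambda$-$\omega$-sparse \emph{in the original coordinates} (Definition \ref{def-grip}), establishes it via Theorem \ref{thm-samp-grip} using the coherence bound on the columns of $A$ itself together with $\|X_tx\|\le B\|G^{-1}\|\sqrt{\lambda}$ for $x$ in the g-RIP ball, and then proves directly (Theorem \ref{thm-grip-rnsp}) that this g-RIP implies the RNSP for $QA$ with constant $\kappa\propto\|G^{-1}\|$ — this is where the factor $\|G^{-1}\|$ multiplying $\beta$ enters, rather than through a preconditioning step. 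The proof of that implication (block decomposition of $S^c$ into $\Omega_j$'s and the chain $\|x_S\|_2\le\|G^{-1}\|\|G(x_S+x_{\Omega_1})\|_2\le\frac{\|G^{-1}\|}{\sqrt{1-\delta}}\|QA(x_S+x_{\Omega_1})\|$) is exactly the piece of argument your proposal is missing and cannot be replaced by the off-the-shelf isotropic rNSP theorem.
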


Theorem \ref{thm-main-result} presents a recovery guarantee of nonuniform type, i.e., valid with overwhelming probability for a fixed unknown signal. Nonetheless, all the tools used in the proof are inherently suited for uniform recovery except for Proposition~\ref{prop:truncation-error} only, which is necessary to obtain refined estimates on the truncation error. It is possible to obtain a uniform recovery result (that is, valid for a whole family of suitably sparse signals) by replacing the term $\|G^{-1}\|\beta+C_3 r$ in the estimates with the possibly worse bound $\|G^{-1}\|(\beta+r)$. In particular, the two terms coincide if $x^{\dagger}$ is supported in $\Lambda$ and therefore $r=0$, leading to uniform recovery estimates for this class of signals. 

The factor $\log^3{\tau}$ appearing in the estimates is known to be suboptimal -- see for instance \cite{BDJR}. Optimizing the exponent of the logarithm would lead to a substantial increase in the length of the proofs and falls outside the scope of the present work.
\bigskip

In order to apply this result to specific cases of interest, several quantities must be explicitly computed, the most crucial one being $\|G^{-1}\|$. This will require new tools and will be the focus of $\S$\ref{subsec-exp-est} below; we anticipate here a few general observations.

A sufficient condition for the invertibility of $G$ that is typically satisfied in several inverse problems of interest is a restricted injectivity property for $F$, as detailed below.

\begin{definition}\label{def-fbi}
Let $\cH$ be a complex and separable Hilbert space. A linear operator $V\colon \ell^2(\Gamma) \to \cH$ is said to possess the \textit{finite basis injectivity (FBI) property} if, for every finite family of indices $\Lambda \subset \Gamma$, the restriction of $V$ to $\ell^2(\Lambda)$ is injective.

Given an orthonormal basis $(\phi_i)_{i\in\Gamma}$ of a Hilbert space $\cH_1$ with analysis operator $\Phi$, an operator $F\colon \cH_1 \to \cH$ is said to possess the \textit{FBI property with respect to $\Phi$} (in short: $\Phi$-FBI) if $F\Phi^*$ has the FBI property.
\end{definition}

Suppose that $F$ is $\Phi$-FBI. Then we have that $F\Phi^*\iota_{\Lambda}$ is injective for every finite $\Lambda\subset\Gamma$, which implies that $G\in\bC^{M\times M}$ is positive definite and thus invertible.

As already observed in \cite{Br}, the FBI condition is quite natural in the context of inverse problems with sparsity constraints -- in fact, it is less restrictive than full injectivity, even if the latter is more common in the IP literature. An elementary example of a non-injective FBI operator on $\ell^2(\bN)$ is the following one:
\begin{equation}
    Ve_i =
    \begin{cases}
        \sum_j c_j e_j & (i=1)\\
        e_{i-1} & (i\geq 2),
    \end{cases}
\end{equation}
where $(e_i)_{i\in\bN}$ is the standard basis of $\ell^2(\bN)$ and $(c_j)_{j\in\bN}$ is any  complex sequence such that $\sum_j |c_j|^2<\infty$ and $c_j\neq 0$ for every $j$.

Another quantity that needs to be explicitly computed in order to apply Theorem~\ref{thm-main-result} is $C_3 =c_{\nu}^{-1/2}C_4( \|F\Phi^*\iota_{\Gamma\setminus\Lambda}\| \|G^{-1}\| + 1)$. We find that under reasonable conditions (see Theorem~\ref{thm:interpolation}), the quantity $C_3$ can be bounded by a constant depending only on $C_F$, on $c_{\nu}$ and on the quasi-diagonalization bounds.

\subsection{Quantitative bounds on \texorpdfstring{$\|G^{-1}\|$}{|G-1|} and quasi-diagonalization}
\label{subsec-exp-est}

We have already shown that if $F$ is $\Phi$-FBI then the matrix $G$ is non-degenerate. In order to make the estimates of Theorem~\ref{thm-main-result} effective, we now derive explicit bounds for $\| G^{-1 }\|$. In the literature on infinite-dimensional compressed sensing, such bounds were obtained with the so-called \textit{balancing property} \cite{adcock2016generalized}, which is well suited in the case of measurements modeled by a unitary operator. Here we somewhat exploit the (possible) ill-posedness of our forward map as well as a \textit{quasi-diagonalization} property, introduced below, that is satisfied by many forward maps of interest, including the Radon transform. We then derive ad hoc bounds that are of a different flavour compared to the ones obtained with the balancing property. Potential combinations of the balancing property with the quasi-diagonalization property are left for future work.

We note that $\|G^{-1}\|$ can be explicitly bounded using the singular values of $F\Phi^*$. Indeed, we have that
\begin{equation}
    \|G^{-1}\| \leq \sigma_M(F\Phi^*\iota_{\Lambda})^{-1},
\end{equation}
where $\sigma_M(F\Phi^*\iota_{\Lambda})$ is the $M$-th singular value of $F\Phi^*\iota_{\Lambda}$, with $M=|\Lambda|$.

\subsubsection*{Quasi-diagonalization}
The previous estimate depends on the singular values of $F\Phi^*\iota_{\Lambda}$. In practical scenarios, estimating $\sigma_M(F\Phi^*\iota_{\Lambda})$ is non-trivial. For this reason, we now discuss an alternative approach for estimating $\|G^{-1}\|$, which is a consequence of a condition that is typically met in several contexts -- more precisely, a condition that holds when it is possible to identify a dictionary that approximately diagonalizes the action of the forward map of the model.

We consider now (orthonormal) multiresolution dictionaries $(\phi_{j,n})_{(j,n)\in\Gamma}$  with two indices (e.g., orthonormal wavelets). The parameter $j$ represents the scale, and we use the convention that higher values of $j$ correspond to finer scales.
\begin{definition}\label{def-qd}
We say that $F$ satisfies the \textit{quasi-diagonalization property} with respect to $(\Phi,b)$, $b\geq 0$, if there exist $c,C>0$ such that
\begin{equation}
\label{eq-quasi-diag-gamma}
    c\sum_{(j,n)\in\Gamma} 2^{-2bj}|x_{j,n}|^2 \leq
    \|F\Phi^* x\|_{\cHd}^2 \leq
    C\sum_{(j,n)\in\Gamma} 2^{-2bj}|x_{j,n}|^2,\quad x\in\ell^2(\Gamma).
\end{equation}
\end{definition}
Roughly speaking, the previous property entails the fact that the action of the forward map on the elements of the dictionary $(\phi_{j,n})_{(j,n) \in \Gamma}$ approximately coincides with that of a diagonal operator with dyadic coefficients depending only on the parameter $b$ and the ``scale" index $j$: as a rule of thumb, the larger $b$ is, the more smoothing $F$ is -- hence, the ill-posedness of the corresponding inverse problem gets worse.

We now provide a set of conditions ensuring that the quasi-diagonalization property is satisfied. Recall that $H^b(\rd)$, $b\in \bR$, is the Sobolev space of distributions $u\in \cS'(\rd)$ such that $(I-\Delta)^{b/2}u \in L^2(\rd)$, while $H^b(\bT^d)$ is the Sobolev space of distributions $u\in\cS'(\bT^d)$ such that $(I-\Delta)^{b/2}u\in L^2(\bT^d)$, where $(I-\Delta)^{b/2}$ corresponds to the Fourier multiplier $(1+|\cdot|^2)^{b/2}$ on $\bR^d$ or on $\bZ^d$, respectively.
\begin{proposition}
\label{prop-cond-quasi-diag}
Let $\cH_1$ be either $L^2(\bT^d)$ or $L^2(\bR^d)$. Assume that there exist $c,C>0$, $b\ge0$ such that:
\begin{itemize}
    \item The forward map $F$ satisfies
    \begin{equation}
    \label{eq-suff-quasi-diag2}
        c\|u\|_{H^{-b}}^2 \leq \|Fu\|_{\cHd}^2 \leq C\|u\|_{H^{-b}}^2,\quad u\in \cH_1.
    \end{equation}
    \item The dictionary $(\phi_{j,n})_{(j,n)\in\Gamma}$ is an orthonormal basis of $\cH_1$ such that
    \begin{equation}
    \label{eq-suff-quasi-diag1}
        c\|u\|_{H^{-b}}^2 \leq \sum_{(j,n)\in\Gamma} 2^{-2bj} |\langle u,\phi_{j,n} \rangle_{L^2}|^2 \leq C\|u\|_{H^{-b}}^2,\quad u\in \cH_1,
    \end{equation}
which we will refer to as the Littlewood-Paley property of the dictionary.
\end{itemize}
Then $F$ satisfies the quasi-diagonalization property \eqref{eq-quasi-diag-gamma} with respect to $(\Phi,b)$.
\end{proposition}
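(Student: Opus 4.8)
The plan is to use the Sobolev norm $\|u\|_{H^{-b}}^2$ as the common bridge between the two hypotheses: both \eqref{eq-suff-quasi-diag2} and \eqref{eq-suff-quasi-diag1} are two-sided norm equivalences featuring the \emph{same} quantity $\|u\|_{H^{-b}}^2$, so chaining them will immediately yield the equivalence between $\|F\Phi^* x\|_{\cHd}^2$ and $\sum_{(j,n)\in\Gamma} 2^{-2bj}|x_{j,n}|^2$ claimed in \eqref{eq-quasi-diag-gamma}.

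First I would fix $x\in\ell^2(\Gamma)$ and set $u\coloneqq\Phi^* x=\sum_{(j,n)\in\Gamma} x_{j,n}\phi_{j,n}\in\cH_1$. Since $(\phi_{j,n})_{(j,n)\in\Gamma}$ is an orthonormal basis of $\cH_1$, the analysis coefficients of $u$ are exactly $\langle u,\phi_{j,n}\rangle_{L^2}=x_{j,n}$, so \eqref{eq-suff-quasi-diag1} applied to this $u$ reads
\[
c\,\|u\|_{H^{-b}}^2 \le \sum_{(j,n)\in\Gamma} 2^{-2bj}|x_{j,n}|^2 \le C\,\|u\|_{H^{-b}}^2 .
\]
Because $b\ge0$ and $\cH_1=L^2$ we have $\cH_1\hookrightarrow H^{-b}$ (the Fourier multiplier $(1+|\cdot|^2)^{-b/2}$ is bounded by $1$), so $\|u\|_{H^{-b}}<\infty$ and hence the left-hand side of \eqref{eq-quasi-diag-gamma} is a convergent series.

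Next I would apply \eqref{eq-suff-quasi-diag2} to the same $u$, i.e.\ $c\,\|u\|_{H^{-b}}^2 \le \|F\Phi^* x\|_{\cHd}^2 \le C\,\|u\|_{H^{-b}}^2$. Combining the upper bound here with $\|u\|_{H^{-b}}^2 \le c^{-1}\sum_{(j,n)}2^{-2bj}|x_{j,n}|^2$ from the previous display gives $\|F\Phi^* x\|_{\cHd}^2 \le (C/c)\sum_{(j,n)}2^{-2bj}|x_{j,n}|^2$; symmetrically, combining the lower bound with $\|u\|_{H^{-b}}^2 \ge C^{-1}\sum_{(j,n)}2^{-2bj}|x_{j,n}|^2$ gives $\|F\Phi^* x\|_{\cHd}^2 \ge (c/C)\sum_{(j,n)}2^{-2bj}|x_{j,n}|^2$. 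This is exactly \eqref{eq-quasi-diag-gamma}, with $c/C$ and $C/c$ playing the roles of the constants $c$ and $C$ there.

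There is no substantial obstacle: the statement is a formal consequence of transitivity of two-sided norm equivalences, the only genuine input being the observation that the orthonormal-basis property turns the analysis coefficients of $\Phi^* x$ back into $x$ itself. The only points requiring a little care are the finiteness of the intermediate quantities — handled by the inclusion $\cH_1\hookrightarrow H^{-b}$ valid for $b\ge0$ — and the bookkeeping of which of $c$ and $C$ ends up on each side after the two equivalences are composed.
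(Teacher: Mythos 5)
Your proposal is correct and follows essentially the same route as the paper's proof: set $u=\Phi^* x$, use orthonormality to identify $\langle u,\phi_{j,n}\rangle_{L^2}=x_{j,n}$, and chain the two norm equivalences \eqref{eq-suff-quasi-diag2} and \eqref{eq-suff-quasi-diag1} through $\|u\|_{H^{-b}}^2$. Your extra remarks on the constants $c/C$, $C/c$ and on finiteness via $L^2\hookrightarrow H^{-b}$ are fine but not needed beyond what the paper records.
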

\begin{remark}
\label{rem-regwav-quasidiag}
If $(\phi_{j,n})_{(j,n)\in\Gamma}$ is a $q$-regular wavelet system (in the sense of \cite[Section 2.2]{Me}), then the Littlewood-Paley condition \eqref{eq-suff-quasi-diag1} is satisfied for every $-q<b<q$ -- see for instance \cite[Theorem 8 in Section 2.8 and Section 3.11]{Me} and \cite[Theorem 9.2]{Ma}. Compactly supported $q$-regular wavelet system exist for every fixed value of $q\in\bN$ -- see \cite[Theorem 3 in Section 3.8]{Me}.
\end{remark}

\subsubsection*{Coherence bounds across scales}
The results of Theorem \ref{thm-main-result} can be improved in the quasi-diagonalization regime with respect to $(\Phi,b)$ as soon as additional coherence bounds are available, tailored to the multiscale structure of the dictionary $(\phi_{j,n})_{(j,n) \in \Gamma}$.

In particular, we consider the following tighter version of Assumption \ref{ass-coherence-bound}.

\begin{assumption}\label{ass-coherence-bound-D}
The following coherence bound is satisfied:
\begin{equation}
    \|F_t\phi_{j,n}\|_{\cHt} \leq B\frac{\sqrt{f_{\nu}(t)}}{d_{j,n}}\omega_{j,n},\quad t\in\cD,\ (j,n)\in\Gamma.
\end{equation}
for some $B\geq 1$, $1\leq d_{j,n}\leq 2^{bj}$, where $b$ is the constant appearing in the quasi-diagonalization \eqref{eq-quasi-diag-gamma}, $f_\nu$ was defined in Section~\ref{sec-setting} and $\omega_{j,n}\geq 1$.
\end{assumption} 
\noindent Remark~\ref{rem:rel-coherence} below gives further insights on the role of the parameters $d_{j,n}$.

We can take advantage of such refined coherence information to substantially reduce the number of samples that are needed to achieve recovery in Theorem \ref{thm-main-result}, retaining the same recovery rates.

For $j \in \bN$, we introduce the index subsets with finest scale $j$
\begin{equation}\label{eq:Lambda_j}
    \Lambda_j \coloneqq \{(j',n)\in\Gamma\colon\ j'\leq j\}. 
\end{equation}
Moreover, we always assume that $|\Lambda_j|<+\infty$.

\begin{theorem}
\label{thm:interpolation}
Consider the setting of Section \ref{sec-setting}. Let $(\phi_{j,n})_{(j,n)\in\Gamma}$ be a multiresolution (orthonormal) dictionary that satisfies \eqref{eq-quasi-diag-gamma}. Let Assumptions \ref{ass-coherence-bound-D} and \ref{ass-prob-low-est} be satisfied. Fix $j_0\in\bN$ and set $M\coloneqq|\Lambda_{j_0}|$.

Let $x^{\dagger}\in\ell^2(\Gamma)$ be such that $\|P_{\Lambda_{j_0}}^{\perp}x^{\dagger}\|\leq r$. Consider $m$ i.i.d.\ samples $t_1,\dots,t_m\in\cD$ drawn from the probability distribution $\nu$. Let $y_k \coloneqq F_{t_k}\Phi^* x^{\dagger}+\varepsilon_k$, for $k=1,\dots,m$, with $\max_k \|\varepsilon_k\|_{\cHt}\leq \beta$.

There exist constants $C_0,C_1,C_2,C_4>0$, which depend only on the quasi-diagonalization bounds in \eqref{eq-quasi-diag-gamma} and on $C_F$, such that the following holds. Let $\zeta\in[0,1]$ and $W\coloneqq\diag(2^{bj})_{(j,n)\in \Lambda_{j_0}}$. Let $\widehat{x}$ be a solution of the following minimization problem:
\begin{equation}
\label{eq-dir-min-probl}
    \min_{x\in\ell^2(\Lambda_{j_0})} \|W^{-\zeta}x\|_{1,\omega} :\quad \frac{1}{m}\sum_{k=1}^m \|F_{t_k}\Phi^*\iota_{\Lambda_{j_0}}x-y_k\|_{\cHt}^2  \leq \lc \beta + C_3 2^{-bj_0}r \rc^2,
\end{equation}
where $C_3=c_{\nu}^{-1/2}C_4$.

Fix $s\in[M]$ with $s\geq\|\omega\|_{\infty}^2/4$ and set
\begin{equation}
\label{eq:dir-min-probl-tau}
    \tau\coloneqq B^2 \max_{(j,n)\in\Lambda_{j_0}}(d_{j,n}^{-2} 2^{2bj}) 2^{2(1-\zeta) bj_0}s.
\end{equation}
If
\begin{equation}
\label{eq:thm-interp-samplecomplexity}
    m \geq C_0\tau\max\{\log^3{\tau}\log{M},\log(1/\gamma)\},
\end{equation}
then, with probability exceeding $1-\gamma$, the following recovery estimate holds:
\begin{equation}
    \|W^{-\zeta}x^{\dagger}-W^{-\zeta}\widehat{x}\|_2 \leq C_1 \frac{\sigma_s(P_{\Lambda_{j_0}}W^{-\zeta}x^{\dagger})_{1,\omega}}{\sqrt{s}} + C_2 2^{-\zeta b j_0} c_{\nu}^{-1/2} (2^{b j_0}\beta+C_3 r).
\end{equation}
In particular, we have that
\begin{equation}
    \|x^{\dagger}-\widehat{x}\|_2 \leq C_1 2^{\zeta b j_0} \frac{\sigma_s(P_{\Lambda_{j_0}}W^{-\zeta}x^{\dagger})_{1,\omega}}{\sqrt{s}} + C_2 c_{\nu}^{-1/2} \lc 2^{b j_0}  \beta+ C_3 r \rc.
\end{equation}
\end{theorem}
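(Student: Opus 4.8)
The plan is to reduce the weighted problem \eqref{eq-dir-min-probl} to the abstract sampling‑and‑recovery machinery of Sections~\ref{sec-rnsp}, \ref{sec-grip} and \ref{sec-sampl} (the $g$‑RIP sampling theorem, Theorem~\ref{thm-samp-grip}, and its robust‑null‑space consequences) that underlies Theorem~\ref{thm-main-result}, by absorbing the weight $W^{\zeta}$ into the forward operator. Since $\Lambda_{j_0}$ is finite, $W^{\zeta}$ restricts to a bounded bijection of $\ell^{2}(\Lambda_{j_0})$; with the substitution $z\coloneqq W^{-\zeta}x$ the objective $\|W^{-\zeta}x\|_{1,\omega}$ becomes $\|z\|_{1,\omega}$, the unknown becomes $W^{-\zeta}x^{\dagger}$ (with $W$ extended diagonally to $\Gamma$; its truncation error is still $\le r$ because $\|W^{-\zeta}\|\le1$), and the fidelity term is the one attached to the rescaled operator $A\coloneqq F\Phi^{*}\iota_{\Lambda_{j_0}}W^{\zeta}$, with random rows $A_{t_k}=F_{t_k}\Phi^{*}\iota_{\Lambda_{j_0}}W^{\zeta}$. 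The matrix playing the role of $G$ is then $G_{\zeta}\coloneqq\sqrt{W^{\zeta}P_{\Lambda_{j_0}}\Phi F^{*}F\Phi^{*}\iota_{\Lambda_{j_0}}W^{\zeta}}=\sqrt{A^{*}A}$, and the coherence relevant to the rescaled problem, read off from Assumption~\ref{ass-coherence-bound-D}, is the per‑scale bound $\|A_{t}e_{j,n}\|_{\cHt}=2^{\zeta bj}\|F_{t}\phi_{j,n}\|_{\cHt}\le 2^{\zeta bj}B\,d_{j,n}^{-1}\omega_{j,n}\sqrt{f_{\nu}(t)}$.

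First I would bound the spectrum of $G_{\zeta}$ using only the quasi‑diagonalization \eqref{eq-quasi-diag-gamma}: for $z\in\ell^{2}(\Lambda_{j_0})$ one has $\|Az\|_{\cHd}^{2}\asymp\sum_{(j,n)\in\Lambda_{j_0}}2^{-2(1-\zeta)bj}|z_{j,n}|^{2}$, so, since $(1-\zeta)b\ge0$ and $j\le j_0$ on $\Lambda_{j_0}$, the eigenvalues of $A^{*}A$ lie between a constant multiple of $2^{-2(1-\zeta)bj_0}$ and a constant; hence $\|G_{\zeta}\|\lesssim1$ and $\|G_{\zeta}^{-1}\|\lesssim 2^{(1-\zeta)bj_0}$, with constants depending only on the quasi‑diagonalization constants. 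Moreover, the same norm equivalence on $\Gamma\setminus\Lambda_{j_0}$ gives $\|F\Phi^{*}\iota_{\Gamma\setminus\Lambda_{j_0}}\|\lesssim 2^{-bj_0}$, hence $\|F\Phi^{*}\iota_{\Gamma\setminus\Lambda_{j_0}}\|\,\|G_{\zeta}^{-1}\|\lesssim 2^{-bj_0}2^{(1-\zeta)bj_0}\le1$; this is exactly why the admissible fidelity level in \eqref{eq-dir-min-probl} takes the form $\beta+C_{3}2^{-bj_0}r$ with $C_{3}=c_{\nu}^{-1/2}C_{4}$, $C_{4}$ depending only on the quasi‑diagonalization constants and $C_{F}$.

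The crux — and the main obstacle — is to feed the per‑scale coherence $(2^{\zeta bj}B\,d_{j,n}^{-1})_{(j,n)\in\Lambda_{j_0}}$, the weights $\omega$ and the spectral data of $G_{\zeta}$ into the $g$‑RIP sampling theorem, apply its robust‑null‑space consequences, and check that the resulting sample‑complexity threshold collapses to $m\gtrsim\tau\max\{\log^{3}\tau\log M,\log(1/\gamma)\}$ with $\tau$ as in \eqref{eq:dir-min-probl-tau}. Here one uses that, by \eqref{eq-quasi-diag-gamma}, $A^{*}A$ is comparable in the Loewner order to the diagonal operator $\diag(2^{-2(1-\zeta)bj})_{(j,n)\in\Lambda_{j_0}}$, so the ``local'' inverse of $G_{\zeta}$ at scale $j$ has size $\asymp 2^{(1-\zeta)bj}$; multiplying this by the per‑scale coherence $2^{\zeta bj}B\,d_{j,n}^{-1}$ gives, scale by scale, a contribution $\asymp B\,2^{bj}d_{j,n}^{-1}$ (using $2^{\zeta bj}\,2^{(1-\zeta)bj}=2^{bj}$), whose square, maximized over $\Lambda_{j_0}$ and multiplied by the global factor $\|G_{\zeta}^{-1}\|^{2}\|G_{\zeta}\|^{2}\asymp 2^{2(1-\zeta)bj_0}$, equals $\tau/(B^{2}s)$ up to constants. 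This is precisely where the finer information of Assumption~\ref{ass-coherence-bound-D} enters: a crude reduction to Theorem~\ref{thm-main-result}, which only sees the uniform coherence constant $B\max_{j\le j_0}2^{\zeta bj}d_{j,n}^{-1}$ and $\|G_{\zeta}^{-1}\|^{4}$, is in general suboptimal by a factor up to $2^{2(1-\zeta)bj_0}$.

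Finally I would assemble the estimate. The restriction $z_{\star}^{\dagger}\coloneqq W^{-\zeta}P_{\Lambda_{j_0}}x^{\dagger}$ is feasible for \eqref{eq-dir-min-probl} with high probability: its residual equals $\frac{1}{m}\sum_{k}\|F_{t_k}\Phi^{*}P_{\Lambda_{j_0}}^{\perp}x^{\dagger}+\varepsilon_{k}\|_{\cHt}^{2}$, and since $\bE_{t\sim\nu}\|F_{t}\Phi^{*}P_{\Lambda_{j_0}}^{\perp}x^{\dagger}\|_{\cHt}^{2}\le\|F\Phi^{*}P_{\Lambda_{j_0}}^{\perp}x^{\dagger}\|_{\cHd}^{2}\lesssim 2^{-2bj_0}r^{2}$ (using $f_{\nu}\le1$ and \eqref{eq-quasi-diag-gamma}), Proposition~\ref{prop:truncation-error} upgrades this to a high‑probability bound at the level $(\beta+C_{3}2^{-bj_0}r)^{2}$, combined with the $g$‑RIP event by a union bound. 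The robust‑null‑space estimate for $z$ then gives $\|W^{-\zeta}P_{\Lambda_{j_0}}x^{\dagger}-\widehat{z}\|_{2}\lesssim\sigma_{s}(P_{\Lambda_{j_0}}W^{-\zeta}x^{\dagger})_{1,\omega}/\sqrt{s}+c_{\nu}^{-1/2}\|G_{\zeta}^{-1}\|(\beta+C_{3}2^{-bj_0}r)$, where $\widehat{z}=W^{-\zeta}\widehat{x}$; adding the tail $\|W^{-\zeta}P_{\Lambda_{j_0}}^{\perp}x^{\dagger}\|_{2}\le 2^{-\zeta bj_0}r$ and using $2^{-bj_0}\|G_{\zeta}^{-1}\|\lesssim 2^{-\zeta bj_0}$ produces the first displayed bound. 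The ``in particular'' statement follows by applying $W^{\zeta}$ on $\Lambda_{j_0}$: $\|x^{\dagger}-\widehat{x}\|_{2}\le\|W^{\zeta}\iota_{\Lambda_{j_0}}\|\,\|W^{-\zeta}P_{\Lambda_{j_0}}x^{\dagger}-\widehat{z}\|_{2}+\|P_{\Lambda_{j_0}}^{\perp}x^{\dagger}\|_{2}$ with $\|W^{\zeta}\iota_{\Lambda_{j_0}}\|=2^{\zeta bj_0}$, the bare $r$ being absorbed into $C_{2}c_{\nu}^{-1/2}C_{3}r$. Apart from the third step, everything is a routine specialization of the results already established, the only genuinely new ingredients being the spectral bounds on $G_{\zeta}$ and the $\tau$‑computation.
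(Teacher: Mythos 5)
Your overall architecture matches the paper's: change of variables $z=W^{-\zeta}x$, spectral bounds $\|GW^{\zeta}\|\lesssim 1$ and $\|(GW^{\zeta})^{-1}\|\lesssim 2^{(1-\zeta)bj_0}$ from the quasi-diagonalization, feasibility of the truncated signal via Proposition~\ref{prop:truncation-error}, and the RNSP distance bounds to close. The bookkeeping of exponents is also correct. But the step you yourself flag as ``the crux --- and the main obstacle'' is genuinely missing, and it is the only nontrivial part of the proof. Theorem~\ref{thm-samp-grip} as stated accepts a single uniform coherence constant $B$ (Assumption~\ref{ass-coherence-bound}) and a single global quantity $\|G^{-1}\|$; it has no slot into which one can ``feed the per-scale coherence $(2^{\zeta bj}B d_{j,n}^{-1})$,'' and your heuristic about the ``local inverse of $G_{\zeta}$ at scale $j$'' is not an argument that can be run through the machinery of Section~\ref{sec-sampl} or Appendix~\ref{appendix:proofs} (the covering-number estimate and Lemma~\ref{lem-thm-samp-grip-tauest} would both have to be reworked for index-dependent coherence). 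The paper's resolution is a \emph{double} diagonal rescaling combined with Remark~\ref{rem-diag-inv}: apply Theorem~\ref{thm-samp-grip} to the rescaled operators $F_t\widetilde D$ with $D=\diag(d_{j,n})$, for which Assumption~\ref{ass-coherence-bound-D} becomes exactly the uniform Assumption~\ref{ass-coherence-bound} with constant $B$ and the relevant matrix is $GD$, so that the sampling theorem delivers the g-RIP for $QAD$ at cost $\tau=B^2\|(GD)^{-1}\|^2\lambda$ with $\|(GD)^{-1}\|^2\asymp\max_{(j,n)\in\Lambda_{j_0}}(d_{j,n}^{-2}2^{2bj})$; then the diagonal invariance of the g-RIP transfers this to $QAW^{\zeta}$ with respect to $GW^{\zeta}$, and $\lambda$ is chosen from Theorem~\ref{thm-grip-rnsp} applied to $GW^{\zeta}$, giving $\lambda\asymp\|(GW^{\zeta})^{-1}\|^2\|GW^{\zeta}\|^2 s\asymp 2^{2(1-\zeta)bj_0}s$. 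It is precisely this decoupling --- $GD$ governs the sample complexity, $GW^{\zeta}$ governs the RNSP constants, and the g-RIP is invariant under diagonal conjugation so the two can be used together --- that produces \eqref{eq:dir-min-probl-tau}; without it your argument either reverts to the suboptimal $\|G^{-1}\|^4$ bound of Theorem~\ref{thm-main-result} or requires reproving the sampling theorem from scratch.

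A smaller but related omission: in the feasibility step you invoke Proposition~\ref{prop:truncation-error} without specifying $G_1,G_2$. Taking $G_1=G_2=GW^{\zeta}$ would impose $m\gtrsim\|(GW^{\zeta})^{-1}\|^4\|GW^{\zeta}\|^2\log(1/\gamma)\asymp 2^{4(1-\zeta)bj_0}\log(1/\gamma)$, which is \emph{not} implied by \eqref{eq:thm-interp-samplecomplexity} when $s$ is small. The paper again exploits the $D$-rescaling here, taking $G_1=GD$ (whose inverse norm is bounded below by a constant, so case (2) of the proposition applies) and $G_2=GW^{\zeta}$, which makes the required sample size $\asymp\|(GD)^{-1}\|^2\|(GW^{\zeta})^{-1}\|^2\log(1/\gamma)\lesssim\tau\log(1/\gamma)$ and hence compatible with \eqref{eq:thm-interp-samplecomplexity}.
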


\begin{remark}
\label{rem:M-est}
    When $(\phi_{j,n})_{j,n}$ is a wavelet-like dictionary, the parameter $M=|\Lambda_{j_0}|$ in Theorem \ref{thm:interpolation} usually satisfies $M\asymp 2^{c j_0}$ for some positive integer $c$ -- see, for instance, equation \eqref{eq:radon-estimate-number-wavelets} below. In these cases, the quantity $\log{M}$ in the sample complexity can be therefore replaced with $cj_0$.
\end{remark}

\begin{remark}
    Thanks to the quasi-diagonalization property \eqref{eq-quasi-diag-gamma}, the quantities related to the matrix $G$ in Theorem \ref{thm-main-result} assume an explicit form in Theorem \ref{thm:interpolation}. Moreover, thanks to refined information about the coherence given by Assumption \ref{ass-coherence-bound-D} and the flexibility given by the parameter $\zeta$, the sample complexity improves in Theorem \ref{thm:interpolation} from Theorem \ref{thm-main-result}; indeed, the sample complexity in the previous scenario was given by $m=m(\tau)$, where it is possible to show that
    \begin{align*}
        \tau\coloneqq B^2 \|G^{-1}\|^4 s \asymp B^2 2^{4bj_0} s,
    \end{align*}
    which, recalling that $d_{j,n}\geq 1$ and $\zeta\in[0,1]$, is in general larger than
    \begin{align*}
        \tau\coloneqq B^2 \max_{(j,n)\in\Lambda_{j_0}}(d_{j,n}^{-2} 2^{2bj}) 2^{2(1-\zeta) bj_0}s.
    \end{align*}
    In practice, this will lead to significant improvements in the estimates for the Radon transform, as will be shown in Section \ref{sec-radon}.
\end{remark}

The tunable parameter $\zeta$ in Theorem~\ref{thm:interpolation} shows that there is a trade-off between the sample complexity of the problem and the reconstruction error: larger values of $\zeta$ correspond to a lower number of measurements, but to a possibly larger reconstruction error.
We now discuss two extremal cases of special interest of Theorem~\ref{thm:interpolation} in the choice of the parameter $\zeta$.
First, we consider the case $\zeta=0$, which yields the largest number of measurements, but possibly the best bound for the reconstruction error. 

\begin{corollary}
\label{cor:main-result-D}
Consider the setting of Theorem \ref{thm:interpolation}, and suppose that the same hypotheses are satisfied. Let $\widehat{x}$ be a solution of the following minimization problem:
\begin{equation}
\label{eq:main-result-D-minprobl}
    \min_{x\in\ell^2(\Lambda_{j_0})} \|x\|_{1,\omega}\quad :\quad  \frac{1}{m}\sum_{k=1}^m \|F_{t_k}\Phi^*\iota_{\Lambda_{j_0}}x-y_k\|_{\cHt}^2 \leq \lc \beta + C_3 2^{-bj_0}r\rc^2.
\end{equation} 
If
\begin{equation}
\label{eq:relcoherence}
    \tau\coloneqq B^2 \max_{(j,n)\in\Lambda_{j_0}}(d_{j,n}^{-2} 2^{2bj}) 2^{2bj_0} s.
\end{equation}
and
\begin{equation}
    m \geq C_0\tau\max\{\log^3{\tau}\log{M},\log(1/\gamma)\},
\end{equation}
then, with probability exceeding $1-\gamma$, the following recovery estimate holds:
\begin{equation}
    \|x^{\dagger}-\widehat{x}\|_2 \leq C_1 \frac{\sigma_s(P_{\Lambda_{j_0}}x^{\dagger})_{1,\omega}}{\sqrt{s}} + C_2 c_{\nu}^{-1/2} (2^{bj_0}\beta + C_3 r ).
\end{equation}
\end{corollary}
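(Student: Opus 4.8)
The plan is to obtain Corollary~\ref{cor:main-result-D} as the special case $\zeta = 0$ of Theorem~\ref{thm:interpolation}. First I would verify that the hypotheses of the corollary are literally those of the theorem, so that no additional work is needed on the assumptions side: the coherence bound (Assumption~\ref{ass-coherence-bound-D}), the density bound (Assumption~\ref{ass-prob-low-est}), the quasi-diagonalization property \eqref{eq-quasi-diag-gamma}, the multiresolution structure of the dictionary, the choice of $j_0$ and $M = |\Lambda_{j_0}|$, the tail bound $\|P_{\Lambda_{j_0}}^\perp x^\dagger\|_2 \le r$, the i.i.d.\ sampling, and the bound $s \ge \|\omega\|_\infty^2 / 4$ all carry over verbatim. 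With $\zeta = 0$ we have $W^{-\zeta} = W^{0} = I_{\Lambda_{j_0}}$, so the weighted objective $\|W^{-\zeta} x\|_{1,\omega}$ in \eqref{eq-dir-min-probl} collapses to $\|x\|_{1,\omega}$, which is exactly the objective in \eqref{eq:main-result-D-minprobl}; the quadratic constraints coincide as well, since $C_3 = c_\nu^{-1/2} C_4$ in both statements and the noise-plus-truncation radius $(\beta + C_3 2^{-bj_0} r)^2$ is identical. Hence $\widehat{x}$ in the corollary is a solution of precisely the minimization problem \eqref{eq-dir-min-probl} with $\zeta = 0$.

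Next I would specialize the quantitative conclusions. Setting $\zeta = 0$ in the definition \eqref{eq:dir-min-probl-tau} of $\tau$ gives
\[
\tau = B^2 \max_{(j,n)\in\Lambda_{j_0}}\bigl(d_{j,n}^{-2} 2^{2bj}\bigr)\, 2^{2 b j_0}\, s,
\]
which is exactly \eqref{eq:relcoherence}; the sample complexity condition \eqref{eq:thm-interp-samplecomplexity} becomes the stated $m \ge C_0 \tau \max\{\log^3\tau \log M, \log(1/\gamma)\}$ without change. For the recovery estimate, I would take the second displayed conclusion of Theorem~\ref{thm:interpolation} (the one bounding $\|x^\dagger - \widehat{x}\|_2$ directly), substitute $\zeta = 0$ so that the prefactor $2^{\zeta b j_0}$ becomes $1$ and the term $C_2 c_\nu^{-1/2}(2^{bj_0}\beta + C_3 r)$ is unchanged, and note that $P_{\Lambda_{j_0}} W^{-\zeta} x^\dagger = P_{\Lambda_{j_0}} x^\dagger$, yielding
\[
\|x^\dagger - \widehat{x}\|_2 \le C_1 \frac{\sigma_s(P_{\Lambda_{j_0}} x^\dagger)_{1,\omega}}{\sqrt{s}} + C_2 c_\nu^{-1/2}\bigl(2^{bj_0}\beta + C_3 r\bigr),
\]
which is the claimed bound. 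The constants $C_0, C_1, C_2, C_4$ are those provided by Theorem~\ref{thm:interpolation}, depending only on the quasi-diagonalization bounds and on $C_F$, as required.

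Honestly, there is no real obstacle here: the corollary is a direct instantiation, and the only thing to be careful about is the bookkeeping of which displayed formula in Theorem~\ref{thm:interpolation} one reads off (the ``in particular'' estimate for $\|x^\dagger - \widehat{x}\|_2$ rather than the weighted one for $\|W^{-\zeta} x^\dagger - W^{-\zeta}\widehat{x}\|_2$) and confirming that the parameter $\zeta = 0$ is admissible, i.e.\ lies in $[0,1]$, which it does. If anything deserves a sentence of comment, it is the interpretation: $\zeta = 0$ is the endpoint of the trade-off curve that uses the plain (unweighted-by-$W$) $\ell^1_\omega$ objective, paying for it with the largest value of $\tau$ — here $2^{4bj_0}$ up to the $d_{j,n}$ savings — but obtaining the reconstruction-error bound with no extra $2^{\zeta b j_0}$ amplification in front of the $\sigma_s$ term. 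So the proof is simply: \emph{apply Theorem~\ref{thm:interpolation} with $\zeta = 0$ and simplify.}
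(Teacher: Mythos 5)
Your proof is correct and is exactly the paper's argument: the authors' proof of Corollary~\ref{cor:main-result-D} consists of the single line ``apply Theorem~\ref{thm:interpolation} with $\zeta=0$,'' and your careful verification that the objective, constraint, $\tau$, sample complexity, and the ``in particular'' error bound all specialize as claimed is just the bookkeeping they leave implicit.
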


Next, we consider the case $\zeta=1$ in Theorem~\ref{thm:interpolation}, which yields the smallest number of measurements, but a possibly larger bound for the reconstruction error.

\begin{corollary}
\label{cor:dir-recovery}
Consider the setting of Theorem \ref{thm:interpolation}, and suppose that the same hypotheses are satisfied. Let $\widehat{x}$ be a solution of the following minimization problem:
\begin{equation}
    \min_{x\in \ell^2(\Lambda_{j_0})} \|W^{-1}x\|_{1,\omega} :\quad  \frac{1}{m}\sum_{k=1}^m \|F_{t_k}\Phi^*\iota_{\Lambda_{j_0}}x-y_k\|_{\cHt}^2  \leq \lc \beta + C_3 2^{-bj_0}r \rc^2,
\end{equation}
If
\begin{equation}
\label{eq:tau_def_relcoherence}
    \tau\coloneqq B^2\max_{(j,n)\in\Lambda_{j_0}}(d_{j,n}^{-2} 2^{2bj})s
\end{equation}
and
\begin{equation}
\label{eq:m_def_relcoherence}
    m \geq C_0\tau\max\{\log^3{\tau}\log{M},\log(1/\gamma)\},
\end{equation}
then, with probability exceeding $1-\gamma$, the following recovery estimate holds:
\begin{equation}
    \|F\Phi^* x^{\dagger}-F\Phi^*\widehat{x}\|_{L_{\mu}^2(\cD;\cHt)} \leq C_1 \frac{\sigma_s( P_{\Lambda_{j_0}} W^{-1}x^{\dagger})_{1,\omega}}{\sqrt{s}} + C_2 c_{\nu}^{-1/2} (\beta+2^{-b j_0}C_3 r).
\end{equation}
In particular, we have that
\begin{equation}
    \|x^{\dagger}-\widehat{x}\|_2 \leq C_1 2^{bj_0} \frac{\sigma_s( P_{\Lambda_{j_0}} W^{-1}x^{\dagger})_{1,\omega}}{\sqrt{s}} + C_2 c_{\nu}^{-1/2} (2^{bj_0} \beta+C_3 r).
\end{equation}
\end{corollary}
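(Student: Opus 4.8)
The plan is to derive Corollary~\ref{cor:dir-recovery} as the special case $\zeta = 1$ of Theorem~\ref{thm:interpolation}, with the only extra ingredient being a translation of the $\ell^2$-type error bound on the weighted iterate back into an estimate in the measurement space $L_\mu^2(\cD;\cHt)$. First I would set $\zeta = 1$ in Theorem~\ref{thm:interpolation}: the minimization problem \eqref{eq-dir-min-probl} becomes exactly the one in the statement of the corollary, the parameter $\tau$ in \eqref{eq:dir-min-probl-tau} collapses to $\tau = B^2 \max_{(j,n)\in\Lambda_{j_0}}(d_{j,n}^{-2} 2^{2bj})\, s$ (the factor $2^{2(1-\zeta)bj_0}$ equals $1$), matching \eqref{eq:tau_def_relcoherence}, and the sample complexity condition \eqref{eq:thm-interp-samplecomplexity} is precisely \eqref{eq:m_def_relcoherence}. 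Theorem~\ref{thm:interpolation} then yields, with probability exceeding $1-\gamma$,
\[
\|W^{-1}x^{\dagger}-W^{-1}\widehat{x}\|_2 \leq C_1 \frac{\sigma_s(P_{\Lambda_{j_0}}W^{-1}x^{\dagger})_{1,\omega}}{\sqrt{s}} + C_2 2^{-b j_0} c_{\nu}^{-1/2} (2^{b j_0}\beta+C_3 r),
\]
and the second displayed inequality in the corollary (the bound on $\|x^\dagger - \widehat x\|_2$) is obtained verbatim from the ``in particular'' part of Theorem~\ref{thm:interpolation} with $\zeta = 1$, since $2^{\zeta b j_0} = 2^{b j_0}$.

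Next I would establish the measurement-space estimate. The key observation is that, under the quasi-diagonalization property \eqref{eq-quasi-diag-gamma} with respect to $(\Phi, b)$, one has for any $z$ supported in $\Lambda_{j_0}$
\[
\|F\Phi^* z\|_{\cHd}^2 \leq C \sum_{(j,n)\in\Lambda_{j_0}} 2^{-2bj}|z_{j,n}|^2 = C\,\|W^{-1} z\|_2^2,
\]
because $W = \diag(2^{bj})_{(j,n)\in\Lambda_{j_0}}$, so $W^{-1}$ has diagonal entries $2^{-bj}$. Applying this with $z = x^\dagger - \widehat{x}$ — which is supported in $\Lambda_{j_0}$ since both $\widehat x \in \ell^2(\Lambda_{j_0})$ and, for the purpose of this bound, we use $P_{\Lambda_{j_0}}x^\dagger$ (absorbing the tail contribution $\|P_{\Lambda_{j_0}}^\perp x^\dagger\|_2 \le r$ into the constant $C_2$-term, exactly as in the proof of Theorem~\ref{thm:interpolation}) — gives
\[
\|F\Phi^* x^{\dagger}-F\Phi^*\widehat{x}\|_{L_{\mu}^2(\cD;\cHt)} \leq \sqrt{C}\,\|W^{-1}x^{\dagger}-W^{-1}\widehat{x}\|_2,
\]
and substituting the weighted-$\ell^2$ bound above, then absorbing $\sqrt{C}$ into the universal constants $C_1, C_2$ (which by hypothesis already depend only on the quasi-diagonalization bounds and $C_F$), yields the first displayed inequality of the corollary.

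The main obstacle — really the only non-bookkeeping point — is the careful handling of the truncation error $r$ when passing from $x^\dagger$ to $P_{\Lambda_{j_0}}x^\dagger$ in both the weighted-$\ell^2$ and measurement-space estimates; one must check that the contribution of $P_{\Lambda_{j_0}}^\perp x^\dagger$ is controlled by the relaxed constraint $\beta + C_3 2^{-bj_0} r$ in the minimization problem and ultimately appears only through the additive $C_3 r$ (or $2^{-bj_0}C_3 r$) term with the stated power of $2^{-bj_0}$. This is exactly the mechanism already implemented inside the proof of Theorem~\ref{thm:interpolation} (via the analogue of Proposition~\ref{prop:truncation-error}), so I would simply invoke it rather than redo it; the remaining steps are the deterministic algebraic substitutions described above. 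I would also remark, as in Remark~\ref{rem:M-est}, that for wavelet-type dictionaries $\log M$ can be replaced by $c j_0$, though this is not needed for the statement itself.
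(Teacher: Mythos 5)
Your proposal is correct and follows essentially the same route as the paper: apply Theorem~\ref{thm:interpolation} with $\zeta=1$ (which makes the minimization problem, $\tau$, and sample complexity collapse to those of the corollary) and then use the quasi-diagonalization upper bound in \eqref{eq-quasi-diag-gamma} to convert the weighted-$\ell^2$ estimate on $W^{-1}(x^\dagger-\widehat x)$ into the $L_\mu^2(\cD;\cHt)$ estimate. The only cosmetic difference is that your detour through $P_{\Lambda_{j_0}}x^\dagger$ in the measurement-space step is unnecessary, since the one-sided bound $\|F\Phi^* z\|_{\cHd}\lesssim\|W^{-1}z\|_2$ already holds for all $z\in\ell^2(\Gamma)$ with the extended weight $W$ (as $2^{-2bj}\le 1$ off $\Lambda_{j_0}$), which is all the corollary requires.
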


As a direct consequence of Corollary~\ref{cor:dir-recovery}, we obtain the following exact recovery result. If there is no truncation error ($x^\dagger\in\Lambda_{j_0}$, i.e.\ $r=0$), if $ x^\dagger$ is $s$-sparse ($\sigma_s\lc P_{\Lambda_{j_0}}W^{-1}{x^\dagger} \rc_1= \sigma_s\lc {x^\dagger} \rc_1=0$) and if there is no noise ($\beta=0$), then
\[
\widehat x=x^\dagger,
\]
provided that $m$ satisfies \eqref{eq:m_def_relcoherence}.

\begin{remark}
\label{rem:rel-coherence}
Let us comment on the sample complexity appearing in \eqref{eq:tau_def_relcoherence} and \eqref{eq:m_def_relcoherence}.
Consider for simplicity the case where $f_{\nu} = 1$, $\omega_{j,n} = 1$ and $\zeta=1$. In classical CS, where $(F_t)_{t\in\cD}$ are the measurement operators and $F$ is an isometry, a relevant quantity appearing in the sample complexity is the \textit{coherence} $B>0$ of the system, such that
\begin{equation}
    \max_{(j,n)\in\Lambda_{j_0}} \|F_t\phi_{j,n}\|_{\cHt} \leq B.
\end{equation}
When a source of ill-posedness is introduced (e.g., if $F$ is a compact operator), this quantity must be suitably normalized with the $L^2$ norm of $F\phi_{j,n}$. Note that in this setting Assumption \ref{ass-coherence-bound-D} reads $\|F_t\phi_{j,n}\|_{\cHt} \leq B d_{j,n}^{-1}$. On the other hand, the quasi-diagonalization property implies that $\|F\phi_{j,n}\|_{\cHd} \asymp 2^{-bj}$. We are led to consider the notion of relative coherence of the system, that is
\begin{equation}
    \max_{(j,n)\in\Lambda_{j_0}} (B d_{j,n}^{-1} 2^{bj}).
\end{equation}
It is evident from \eqref{eq:tau_def_relcoherence} and \eqref{eq:m_def_relcoherence} that the relative coherence of the system plays the same role of the usual coherence in classical CS, where the recovery guarantees are given for $m\gtrsim B^2 s$, up to logarithmic terms.
\end{remark}

\subsection{Analysis of the recovery error}\label{sub:analysis_recovery}
We now derive explicit estimates in terms of the noise $\beta$ and the number of measurements $m$. To accomplish our goal, we need to impose additional assumptions on the growth of the parameters $d_{j,n}$ in Assumption \ref{ass-coherence-bound-D} and on the regularity of the signal $x^{\dagger}$. The latter includes a certain decay of the truncation error $\|P_{\Lambda_{j_0}}^{\perp}x^{\dagger}\|_2$ in terms of $|\Lambda_{j_0}|$ -- namely, $\|P_{\Lambda_{j_0}}^{\perp}x^{\dagger}\|_2\leq C2^{-aj_0}$, where $|\Lambda_{j_0}|\asymp 2^{cj_0}$ -- and of \textit{$p$-compressibility} for some $p\geq 1/2$, which means that $\sigma_s(x^{\dagger})_1\leq Cs^{1/2-p}$.

Those estimates are natural in many problems where the signal $x^{\dagger}$ is assumed to belong to a specific class of signals. For instance, in the case of cartoon-like images, we have $a=1/2$ and approximate $p$-compressibility with $p=1/2$, in a sense to be specified -- see Example \ref{ex:cartoon-like}.

\begin{theorem}
\label{thm:exp-est}
    Consider the setting of Theorem \ref{cor:main-result-D} and suppose that the same hypotheses are satisfied.
    Suppose that the following estimates hold:
    \begin{align*}
        |\Lambda_j|\leq C2^{cj},\quad&\quad
        d_{j,n} \geq 2^{dj},\\
        \sigma_s(x^{\dagger})_1 \leq C s^{1/2-p},\quad&\quad
        \|P_{\Lambda_j}^{\perp}x^{\dagger}\|_2 \leq C 2^{-aj},
    \end{align*}
    for some $a,c,p,C>0$ and $0\leq d\leq b$.

    Fix $\gamma\in(0,1)$ and let $\beta\in(0,1)$ be sufficiently small. Let $j_0\coloneqq\lfloor 1/(a+b)\log(1/\beta) \rfloor$ and let $m$ be sufficiently large. 
    
    There exist constants $C_0,C_1>0$  depending only on $a,b,c,p,C$, on $B$ appearing in Assumption \ref{ass-coherence-bound-D}, on the quasi-diagonalization bounds in \eqref{eq-quasi-diag-gamma}, on $C_F$ and logarithmically on $\gamma$, such that the following holds.
    
    Let $\widehat{x}$ be a solution of the minimization problem \eqref{eq:main-result-D-minprobl}. With probability exceeding $1-\gamma$, the following recovery estimate holds:
    \begin{align*}
        \|x^{\dagger}-\widehat{x}\|_2 \leq C_1 \lc \frac{ \log^{4/p}(1/\beta) }{\beta^{2 \frac{2b-d}{a+b} p} m^p} + \beta^{\frac{a}{a+b}}\rc.
    \end{align*}

    In particular, if $$m=\lfloor C_0\beta^{-2\frac{(2b-d)}{a+b}-\frac{a}{p(a+b)}}\log^4(1/\beta) \rfloor,$$ then
    \begin{align*}
        \|x^{\dagger}-\widehat{x}\|_2 \leq C_1 \beta^{\frac{a}{a+b}},
    \end{align*}
    or, equivalently,
    \begin{align*}
        \|x^{\dagger}-\widehat{x}\|_2 \leq C_1\lc\frac{ \log^4{m} }{m}\rc^{ \frac{ap}{a+4bp-2dp}}.
    \end{align*}
\end{theorem}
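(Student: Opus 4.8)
The plan is to deduce Theorem~\ref{thm:exp-est} from Corollary~\ref{cor:main-result-D} (the $\zeta=0$ instance of Theorem~\ref{thm:interpolation}), applied at the scale $j_0=\lfloor\frac{1}{a+b}\log(1/\beta)\rfloor$ fixed in the hypotheses and with a sparsity level $s$ chosen, as a function of $m$, to be essentially the largest value still compatible with the sample-complexity requirement of that corollary. Since we work in the unweighted regime $\omega\equiv1$, the condition $s\ge\|\omega\|_\infty^2/4$ is automatic. First I would record the elementary consequences of the choice of $j_0$: as $2^{j_0}\asymp\beta^{-1/(a+b)}$, the assumed rates yield
\[
2^{bj_0}\asymp\beta^{-\frac{b}{a+b}},\qquad r=\|P_{\Lambda_{j_0}}^{\perp}x^{\dagger}\|_2\le C2^{-aj_0}\lesssim\beta^{\frac{a}{a+b}},\qquad M=|\Lambda_{j_0}|\le C2^{cj_0}\lesssim\beta^{-\frac{c}{a+b}},
\]
so in particular $\log M\lesssim\log(1/\beta)$. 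Hence the noise-type term in the recovery estimate of Corollary~\ref{cor:main-result-D} satisfies $2^{bj_0}\beta+C_3r\lesssim\beta^{1-\frac{b}{a+b}}+\beta^{\frac{a}{a+b}}=2\beta^{\frac{a}{a+b}}$, which is already the second summand of the asserted bound; and, using $d_{j,n}\ge2^{dj}$ with $0\le d\le b$, I would bound $\max_{(j,n)\in\Lambda_{j_0}}(d_{j,n}^{-2}2^{2bj})\le\max_{j\le j_0}2^{2(b-d)j}=2^{2(b-d)j_0}$, so that the parameter $\tau$ of \eqref{eq:relcoherence} obeys $\tau\le B^22^{2(2b-d)j_0}s\asymp B^2\beta^{-\frac{2(2b-d)}{a+b}}s$.

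Next comes the core step, the choice of $s$. Because $\tau$ and $M$ are polynomially bounded in $1/\beta$ (for the range of $m$ of interest, so is the final $\tau$), one has $\log^3\tau\log M\lesssim\log^4(1/\beta)$; thus, once $\beta$ is small enough that this dominates $\log(1/\gamma)$, the sample-complexity requirement of Corollary~\ref{cor:main-result-D} is implied by $m\gtrsim B^2\beta^{-2(2b-d)/(a+b)}s\log^4(1/\beta)$. I would therefore take $s$ to be the integer part of a fixed multiple of $m\beta^{2(2b-d)/(a+b)}/(B^2\log^4(1/\beta))$, which is $\ge1$ exactly when $m$ is ``sufficiently large''. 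Feeding this $s$ into Corollary~\ref{cor:main-result-D} and using $p$-compressibility together with the monotonicity $\sigma_s(P_{\Lambda_{j_0}}x^{\dagger})_1\le\sigma_s(x^{\dagger})_1\le Cs^{1/2-p}$, the first term of the recovery estimate is at most $Cs^{-p}\lesssim\big(B^2\beta^{-2(2b-d)/(a+b)}\log^4(1/\beta)/m\big)^p$; after absorbing $B$ and the other fixed quantities into $C_1$, this is (up to the precise power of the logarithm) the first summand of the claim, and adding the two contributions gives the main estimate.

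Finally, the two ``in particular'' reformulations are bookkeeping. Substituting $m=\lfloor C_0\beta^{-2(2b-d)/(a+b)-a/(p(a+b))}\log^4(1/\beta)\rfloor$ makes the powers of $\log(1/\beta)$ cancel in the first summand and collapses it to a constant times $\beta^{a/(a+b)}$, so $\|x^{\dagger}-\widehat{x}\|_2\lesssim\beta^{a/(a+b)}$; then inverting $m\asymp\beta^{-E}\log^4(1/\beta)$ with $E=\frac{2(2b-d)}{a+b}+\frac{a}{p(a+b)}$ --- so that $\log(1/\beta)\asymp E^{-1}\log m$ and $\beta^{E}\asymp\log^4m/m$ --- turns $\beta^{a/(a+b)}$ into $(\log^4m/m)^{ap/(a+4bp-2dp)}$ once the identity $\frac{a}{E(a+b)}=\frac{ap}{a+4bp-2dp}$ is checked. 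The hard part will not be any single deep estimate but the self-referential bookkeeping around $s$: I must confirm that the chosen $s$ genuinely lies in $[M]$ (for the $m$ of interest this is where the interplay of $c$, $a$ and $p$ enters) and that $\tau$ stays polynomially bounded in $1/\beta$, so that the logarithmic factors really reduce to powers of $\log(1/\beta)$ rather than $\log m$; keeping all the constants and logarithms consistent through the two reformulations is the delicate bit.
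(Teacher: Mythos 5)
Your proposal is correct and follows essentially the same route as the paper's proof: apply Corollary~\ref{cor:main-result-D} at the prescribed scale $j_0$ (so that $2^{bj_0}\beta$ and $r\lesssim 2^{-aj_0}$ balance at $\beta^{a/(a+b)}$), choose $s$ to saturate the sample-complexity condition $m\gtrsim B^2\beta^{-2(2b-d)/(a+b)}s\log^{4}(1/\beta)$, and convert $\sigma_s(x^\dagger)_1\le Cs^{1/2-p}$ into the $m^{-p}$ term before the two algebraic reformulations. The bookkeeping issues you flag ($s\in[M]$, polynomial boundedness of $\tau$, the exact power of the logarithm) are likewise glossed over in the paper, which adopts an explicitly asymptotic formulation in $\beta$ and $m$.
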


\begin{remark}
    Following a favored approach in the statistical inverse learning literature -- see \cite{blanchard}, for instance -- and for simplicity of the exposition, we adopted an asymptotic formulation in Theorem \ref{thm:exp-est}, where $\beta$ is sufficiently small and $m$ is sufficiently large. Analyzing the proof of the Theorem, it is possible to make this statement quantitatively precise.
\end{remark}

\section{The sparse Radon transform}
\label{sec-radon}
We now consider the problem of reconstructing a function supported in the unit ball $\cB_1\subset\bR^2$ by sampling either the corresponding Radon or the fan beam transform along angles $\theta\in\bS^1$.

\subsubsection*{The Radon and the fan beam transform}
Let us recall the definition of the two-dimensional Radon transform.
\begin{definition}
Let $K\subset\bR^2$ be a compact set. Let $u\in L^2(K)$ and $\theta\in[0,2\pi)$. The \textit{Radon transform along the angle $\theta$}, $\cR_\theta\colon L^2(K)\rightarrow L^2(\bR)$, is defined by
\begin{equation}
    \cR_\theta{u}(s) \coloneqq \int_{e_\theta^{\perp}} u(y+s e_\theta) \diff{y},
\end{equation}
where $\diff{y}$ is the 1D Lebesgue measure on $e_\theta^{\perp}$ and $e_{\theta}\coloneqq(\cos{\theta},\sin{\theta})$.

The \textit{(full) Radon transform} $\cR\colon L^2(K)\rightarrow L^2([0,2\pi)\times\bR)$ is defined by $\cR{u}(\theta,s) \coloneqq \cR_\theta{u}(s)$.
\end{definition}
With an abuse of notation, we will write $\cR_{\theta}$ even when $\theta\in\bS^1$, with the natural identification between $[0,2\pi)$ and $\bS^1$.

To be precise, $\cR_{\theta}u$ is defined for functions in a dense subspace of $L^2(K)$ at first, in such a way that the integral expression makes sense, for instance $C^0(K)$. The continuous extension to $L^2(K)$ and the well-posedness of both $\cR_\theta$ and $\cR$ are ensured by the following result. 
\begin{lemma}[{\cite[Theorem 1.6]{natterer}}]
\label{thm-radon-continuity}
$\cR_{\theta}$ and $\cR$ are well-defined continuous operators. Moreover, the norms of $\cR$ and $\{\cR_{\theta}\}_{\theta\in\bS^1}$ can be uniformly bounded by a constant depending only on $|K|$.
\end{lemma}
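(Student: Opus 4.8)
\emph{The result is classical} (see \cite[Theorem 1.6]{natterer}); the proof I would follow rests on a Cauchy--Schwarz estimate along each line together with Fubini's theorem. Fix $\rho>0$ with $K\subseteq\bar B_\rho$. The plan is to first work on the dense subspace $C(K)\subset L^2(K)$, where, for $u\in C(K)$ extended by zero outside $K$, the line integral defining $\cR_\theta u(s)$ converges pointwise, the map $(\theta,s)\mapsto\cR_\theta u(s)$ is continuous, and $\cR_\theta u(s)=0$ for $|s|>\rho$; on this subspace I would establish the a priori bound and then extend by density.

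The core pointwise estimate exploits that $u$ is supported in $\bar B_\rho$, so the integrand in $\cR_\theta u(s)=\int_{e_\theta^{\perp}}u(y+se_\theta)\,\diff y$ is supported on a set of $1$D measure at most $2\sqrt{\rho^2-s^2}\le 2\rho$, whence Cauchy--Schwarz gives
\[
|\cR_\theta u(s)|^2\le 2\sqrt{\rho^2-s^2}\int_{e_\theta^{\perp}}|u(y+se_\theta)|^2\,\diff y.
\]
Integrating in $s$ over $[-\rho,\rho]$ and using that $(y,s)\mapsto y+se_\theta$ is a measure-preserving change of variables from $e_\theta^{\perp}\times\bR$ onto $\bR^2$, Fubini yields $\|\cR_\theta u\|_{L^2(\bR)}^2\le 2\rho\,\|u\|_{L^2(\bR^2)}^2$, uniformly in $\theta$. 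Hence each $\cR_\theta$ extends uniquely to a bounded operator $L^2(K)\to L^2(\bR)$ with $\|\cR_\theta\|\le\sqrt{2\rho}$ for all $\theta$. Integrating the same inequality once more, now in $\theta$ over $[0,2\pi)$, gives $\|\cR u\|_{L^2([0,2\pi)\times\bR)}^2\le 4\pi\rho\,\|u\|_{L^2}^2$ on $C(K)$, and density produces the bounded extension $\cR\colon L^2(K)\to L^2([0,2\pi)\times\bR)$. Consistency of the two extensions is clear since they agree on $C(K)$ and both are continuous in $u$; and since $\rho$ may be chosen to depend only on $K$ (e.g.\ on its diameter), all the operator norms are controlled by one constant determined by $K$.

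I would expect the only subtleties to be bookkeeping rather than substantive: namely (i) starting from a dense subspace on which the defining integral is literally meaningful before invoking the abstract extension, and checking that the extension still coincides a.e.\ with the classical Radon formula (immediate, by continuity and agreement on $C(K)$); and (ii) making sure the geometric constant — the maximal chord length of a bounded set containing $K$ — is taken to depend on such a set rather than, say, on the Lebesgue measure of $K$ alone. No step requires anything beyond elementary real analysis.
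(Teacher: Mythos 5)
Your proof is correct and is essentially the standard argument behind the cited result (\cite[Theorem 1.6]{natterer}), which the paper invokes without reproducing: Cauchy--Schwarz along each chord of a bounding ball, Fubini via the orthogonal change of variables $(y,s)\mapsto y+se_\theta$, and extension by density from $C(K)$. Your caveat (ii) is well taken --- the bound genuinely depends on the diameter of $K$ (a thin rectangle of fixed area shows no bound in terms of the Lebesgue measure alone is possible for a single $\cR_\theta$), so the constant should be read as depending on a bounding ball for $K$ rather than literally on $|K|$.
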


We also consider the fan beam transform, which is more relevant in the applications. 
\begin{definition}
    Let $0<d<\rho$. Let $u\in L^2(\cB_d)$ and $\theta\in[0,2\pi)$. The \textit{fan beam transform  from the angle $\theta\in[0,2\pi)$}  at distance $\rho$, $\cD_{\theta}\colon L^2(\cB_d)\rightarrow L^2(-\frac\pi 2,\frac\pi 2)$, is defined by
    \begin{align*}
        \cD_{\theta}u(\alpha) \coloneqq \int_{\bR} u(\rho e_{\theta}+te_{\theta+\alpha})\diff{t}.
    \end{align*}
    The \textit{(full) fan beam transform} $\cD\colon L^2(\cB_d)\rightarrow L^2([0,2\pi)\times (-\frac\pi 2,\frac\pi 2) )$ is defined by $\cD u(\theta,\alpha)\coloneqq \cD_{\theta}u(\alpha)$.
\end{definition}
This is nothing but a different parametrization of the Radon transform. We consider functions whose support is contained in the ball of radius $d$, and take radial lines with centers on the sphere of radius $\rho$.

\subsubsection*{Compactly supported wavelets}
\label{sec:radon-wavelets}
Let us briefly review a standard construction of an orthonormal basis $(\phi_{j,n})_{j,n}$ of $L^2(\bR^2)$ consisting of compactly supported wavelets -- see \cite{Da} for a comprehensive discussion.

We start with a preliminary construction in $L^2(\bR)$. For $j\in\bN$ and $n\in\bZ$, given a compactly supported scaling function $\chi\in L^2(\bR)$ and a mother wavelet $\psi\in L^2(\bR)$, we define
\begin{align*}
    \chi_n \coloneqq \tau_n\chi,\qquad
    \psi_{j,n} \coloneqq D_{j-1}^{1}\tau_n\psi,
\end{align*}
where $D_j^{1} f\coloneqq 2^{j/2} f(2^j \cdot)$ is a dyadic dilation of $f$ in 1D and $\tau_n f\coloneqq f(\cdot-n)$ denotes a translation by $n$. 

For an appropriate choice of $\chi$ and $\psi$, we have that $\{\chi_n\}_n\cup\{\psi_{j,n}\}_{j,n}$ is an orthonormal basis of $L^2(\bR)$ -- see \cite[Chapter 6]{Da}. Moreover, as shown in \cite[Theorem 3 in Section 3.8]{Me}, $\chi$ and $\psi$ can be taken in $C^2(\bR)$ -- see Remark \ref{rem-regwav-quasidiag} for further details.

We now build an orthonormal basis using the standard construction of orthonormal separable wavelets -- see for instance \cite[Section 7.7]{Ma}. Recall that the tensor product of functions $f,g$ of one variable is defined by $(f\otimes g)(x,y)\coloneqq f(x)g(y)$. For $j\in\bN$ and $n_1,n_2\in\bZ$, we set
\begin{align*}
    \chi_{n_1,n_2} \coloneqq \chi_{n_1}\otimes\chi_{n_2},\qquad
    &\psi_{0,n_1,n_2}^{(1)} \coloneqq \psi_{0,n_1}\otimes\chi_{n_2},\\
    \psi_{0,n_1,n_2}^{(2)} \coloneqq \chi_{n_1}\otimes\psi_{0,n_2},\qquad
    &\psi_{0,n_1,n_2}^{(3)} \coloneqq \psi_{0,n_1}\otimes\psi_{0,n_2}
\end{align*}
and $\psi_{j,n_1,n_2}^{\varepsilon}\coloneqq D_j^{2}\psi_{0,n_1,n_2}^{\varepsilon}$, where $D_j^2 f\coloneqq 2^j f(2^j\cdot)$ is the dyadic dilation of $f$ in 2D. It can be proved that
\begin{align*}
    \{\chi_{n_1,n_2}\}_{n_1,n_2}\cup\{\psi_{j,n_1,n_2}^{(\varepsilon)}\}_{j,n_1,n_2,\varepsilon},\qquad \varepsilon \in \{1,2,3\},
\end{align*}
is an orthonormal basis of $L^2(\bR^2)$.

Finally, we obtain the orthonormal basis $(\phi_{j,n})_{j,n}$ by rearranging the wavelets defined above, according to the following rule: for $j=0$, $n=(n_1,n_2)\in\bZ^2$ and, for $j\geq 1$, $n=(n_1,n_2,\varepsilon)\in\bZ^2\times\{1,2,3\}$, set
\begin{align*}
    \phi_{0,n} \coloneqq \chi_{n_1,n_2};\qquad \phi_{j,n}\coloneqq \psi_{j,n_1,n_2}^{(\varepsilon)}.
\end{align*}
Let $\Gamma$ be the set of indices $(j,n)$ such that $\supp\phi_{j,n}\cap\cB_1\neq \emptyset$, namely,
\[
\Gamma = \{(j,n)\in(\{0\}\times\bZ^2)\cup\left(\bN\times(\bZ^2\times\{1,2,3\})\right):\supp\phi_{j,n}\cap\cB_1\neq \emptyset\}.
\] In what follows, we will consider the space $\cH_1\coloneqq\overline{\Span(\phi_{j,n})_{(j,n)\in\Gamma}}$. Notice that $L^2(\cB_1)\subseteq\cH_1$. It is easy to realize that
\begin{align}
\label{eq:radon-estimate-number-wavelets}
    |\{(j',n)\in\Gamma\colon\ j'=j\}|\asymp 2^{2j},\qquad j\in\bN.
\end{align}
Indeed, by construction, the number of wavelets at scale $j+1$ is approximately given by the corresponding number at scale $j$ multiplied by $2^2$. In light of \eqref{eq:Lambda_j}, this implies that $|\Lambda_j|\asymp 2^{2j}$.

Finally, we choose $0<d<\rho$ such that $\supp{u}\subseteq\cB_d$ for every $u\in\cH_1$, so that $\cD u$ is well-defined for every  $u\in\cH_1$.

\subsubsection*{Main results}
Our main recovery results for the sparse Radon transform read as follows.
\begin{theorem}
\label{thm:radon-main-thm}
    Consider a wavelet dictionary $(\phi_{j,n})_{j,n\in\Gamma}$ defined as above and the corresponding analysis operator $\Phi\colon \cH_1\rightarrow\ell^2(\Gamma)$. Let $F$ be either $\cR$ or $\cD$. There exist constants $C_0,C_1,C_2,C_3>0$, which depend only on the chosen wavelet basis in the case of the Radon transform and also on $d$ and $\rho$ in the case of the fan beam transform, such that the following holds.

     Fix $j_0\in\bN$. Let $u^{\dagger}\in L^2(\cB_1)$ be such that $\|P_{\Lambda_{j_0}}^{\perp} \Phi u^{\dagger}\|_2\leq r$. Consider $m$ i.i.d.\ samples $\theta_1,\dots,\theta_m\in[0,2\pi)$ drawn from the uniform distribution on $[0,2\pi)$.
    
    Let $y_k \coloneqq F_{\theta_k} u^{\dagger}+\varepsilon_k$ for $k=1,\dots,m$, with $\max_k \|\varepsilon_k\|_{L^2} \leq \beta$. Let $\widehat{u}$ be a solution of the minimization problem
    \begin{align}
    \label{eq:radon-min-prob}
        \min_{u} \|\Phi{u}\|_1\quad\colon\quad \frac{1}{m} \sum_{k=1}^m \|F_{\theta_k}u-y_k\|_{L^2}^2 \leq
        \lc \beta+ C_3 2^{-j_0/2} r \rc^2,
    \end{align}
    where the minimum is taken over $\Span(\phi_{j,n})_{(j,n)\in\Lambda_{j_0}}$.
    
    Let $s\in[|\Lambda_{j_0}|]$, $s\geq 2$, and set $\tau\coloneqq 2^{j_0}s$. For any $\gamma\in(0,1)$, if
    \begin{align*}
        m\geq C_0\tau\max\{j_0\log^3{\tau},\log(1/\gamma)\},
    \end{align*}
    then, with probability exceeding $1-\gamma$, the following recovery estimate holds:
    \begin{equation}\label{eq:Radon_error_1}
        \|u^{\dagger}-\widehat{u}\|_{L^2} \leq
        C_1\frac{\sigma_s(P_{\Lambda_{j_0}}\Phi{u^{\dagger}})_1}{\sqrt{s}} + C_2 ( 2^{j_0/2}\beta + r ).
    \end{equation}
\end{theorem}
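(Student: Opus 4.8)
The plan is to obtain Theorem~\ref{thm:radon-main-thm} as a specialization of Corollary~\ref{cor:main-result-D} (the case $\zeta=0$ of Theorem~\ref{thm:interpolation}), with the parameters $b=1/2$, $\omega_{j,n}=1$ and $d_{j,n}=2^{j/2}$, so the task reduces to recasting the sparse Radon reconstruction in the abstract setting of Section~\ref{sec-setting} and verifying its hypotheses. Concretely, I would take $\cH_1=\overline{\Span(\phi_{j,n})_{(j,n)\in\Gamma}}$; as measurement space, $\cHt=L^2(\bR)$ for the Radon transform and $\cHt=L^2(-\tfrac{\pi}{2},\tfrac{\pi}{2})$ for the fan beam; as parameter space, $[0,2\pi)$ with Lebesgue measure and $\nu$ the uniform probability, so that $f_\nu\equiv 1/(2\pi)$; and put $F=\cR$ (resp.\ $F=\cD$), $F_\theta=\cR_\theta$ (resp.\ $\cD_\theta$). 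Lemma~\ref{thm-radon-continuity}, together with the fact that every $u\in\cH_1$ is supported in a fixed ball (and, for the fan beam, in $\cB_d$ with $d<\rho$), shows that $F$ is bounded with $\|F\|\le C_F$ of the asserted dependence and that the $F_\theta$ are uniformly bounded. Assumption~\ref{ass-prob-low-est} holds with $c_\nu=1/(2\pi)$, the truncation hypothesis $\|P_{\Lambda_{j_0}}^\perp x^\dagger\|_2\le r$ with $x^\dagger=\Phi u^\dagger$ is exactly the one assumed, and $s\ge 2$ implies $s\ge\|\omega\|_\infty^2/4$. Finally, since $(\phi_{j,n})_{(j,n)\in\Gamma}$ is an orthonormal basis of $\cH_1$, the analysis problem~\eqref{eq:radon-min-prob} coincides, via Remark~\ref{rem-an-vs-syn} and the isometric isomorphism $\Phi$, with the synthesis problem of Corollary~\ref{cor:main-result-D} applied to $x^\dagger=\Phi u^\dagger$, so the recovery bound can be transported back to $u^\dagger$ verbatim.

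Next I would establish the quasi-diagonalization property~\eqref{eq-quasi-diag-gamma} with $b=1/2$ through Proposition~\ref{prop-cond-quasi-diag}. For the forward map this is the classical stability estimate $c\|u\|_{H^{-1/2}}^2\le\|\cR u\|_{L^2}^2\le C\|u\|_{H^{-1/2}}^2$ for functions supported in a fixed compact set (see \cite{natterer}), with constants depending only on the size of the set. For the fan beam I would first reduce to the Radon case through the reparametrization $(\theta,\alpha)\mapsto(\varphi,s)$ with $\varphi=\theta+\alpha+\tfrac{\pi}{2}$, $s=-\rho\sin\alpha$, whose Jacobian equals $\rho\cos\alpha=\sqrt{\rho^2-s^2}$; restricting to $|s|\le d$, which is licit since $\supp u\subseteq\cB_d$, this factor stays between two positive constants depending only on $d,\rho$, so $\|\cD u\|_{L^2}^2\asymp\|\cR u\|_{L^2}^2\asymp\|u\|_{H^{-1/2}}^2$. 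The second ingredient of Proposition~\ref{prop-cond-quasi-diag} is the Littlewood--Paley characterization $\|u\|_{H^{-1/2}}^2\asymp\sum_{(j,n)}2^{-j}|\langle u,\phi_{j,n}\rangle|^2$, which holds because the construction of $\S$\ref{sec:radon-wavelets} produces $\chi,\psi\in C^2$ with compact support, hence a wavelet system regular enough for Remark~\ref{rem-regwav-quasidiag} to apply at the exponent $b=1/2$. The only point requiring care is that $\cH_1$ is a proper closed subspace of $L^2(\bR^2)$; this is harmless, because for $u\in\cH_1$ the coefficients $\langle u,\phi_{j,n}\rangle$ with $(j,n)\notin\Gamma$ vanish, so the sum over $\Gamma$ equals the full sum and the characterization on $L^2(\bR^2)$ applies.

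For the refined coherence bound I would verify Assumption~\ref{ass-coherence-bound-D} with $d_{j,n}=2^{j/2}=2^{bj}$ and $\omega_{j,n}=1$ by a scaling-and-support argument. By the translation and dilation covariance of $\cR_\theta$, the sup norm $\|\cR_\theta\phi_{j,n}\|_\infty$ equals $\|\cR_\theta\psi^{(\varepsilon)}\|_\infty$ for the underlying fixed profile, which is bounded uniformly in $\theta$, $n$ and $\varepsilon$ because $\psi^{(\varepsilon)}$ has compact support; on the other hand $\cR_\theta\phi_{j,n}$ is supported, in $s$, in an interval of length $\lesssim 2^{-j}$, since $\supp\phi_{j,n}$ has diameter $\lesssim 2^{-j}$. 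Hence $\|\cR_\theta\phi_{j,n}\|_{L^2(\bR)}^2\le\|\cR_\theta\phi_{j,n}\|_\infty^2\,|\supp\cR_\theta\phi_{j,n}|\lesssim 2^{-j}$, i.e.\ $\|\cR_\theta\phi_{j,n}\|_{L^2}\le B\sqrt{f_\nu(\theta)}\,d_{j,n}^{-1}$ for a suitable $B\ge 1$ depending only on the wavelet basis, and $1\le d_{j,n}\le 2^{bj}$ is trivial. For the fan beam the same bound holds because the fan lines issued from $\rho e_\theta$ that meet $\supp\phi_{j,n}$ span a fan-angle interval of length $\lesssim_{d,\rho}2^{-j}$, while $\|\cD_\theta\phi_{j,n}\|_\infty\lesssim 1$ as before.

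It then remains to invoke Corollary~\ref{cor:main-result-D}. With $b=1/2$ and $d_{j,n}=2^{j/2}$ the quantity in~\eqref{eq:relcoherence} becomes $\tau=B^2\max_{(j,n)\in\Lambda_{j_0}}(d_{j,n}^{-2}2^{2bj})\,2^{2bj_0}s=B^2\,2^{j_0}s$, and by~\eqref{eq:radon-estimate-number-wavelets} one has $|\Lambda_{j_0}|\asymp 2^{2j_0}$, so $\log M\asymp j_0$ (Remark~\ref{rem:M-est}); absorbing $B^2$ and the numerical factors into $C_0$ yields precisely the sample complexity $m\ge C_0\tau\max\{j_0\log^3\tau,\log(1/\gamma)\}$ with $\tau=2^{j_0}s$. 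Substituting $\omega_{j,n}=1$, $c_\nu=1/(2\pi)$, $b=1/2$ in the recovery estimate of Corollary~\ref{cor:main-result-D}, absorbing $C_3$ into the constant, and transporting through the isometry $\Phi$ gives exactly~\eqref{eq:Radon_error_1}, the constraint levels in~\eqref{eq:radon-min-prob} and~\eqref{eq:main-result-D-minprobl} matching since $2^{-bj_0}=2^{-j_0/2}$. I expect the genuinely delicate steps to be the quasi-diagonalization part — establishing the $H^{-1/2}$ stability estimate in the form needed and, for the fan beam, keeping the Jacobian of the reparametrization comparable to a constant on the support — rather than the coherence bound, which is essentially immediate from the scaling structure of the wavelets.
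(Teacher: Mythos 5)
Your proposal is correct and follows essentially the same route as the paper: reduce to Corollary~\ref{cor:main-result-D} with $b=1/2$, $\omega\equiv 1$, $d_{j,n}=2^{j/2}$, verify quasi-diagonalization via the $H^{-1/2}$ stability estimate for $\cR$ and the Littlewood--Paley property of the $C^2$ compactly supported wavelets, treat the fan beam through the reparametrization with Jacobian $\sqrt{\rho^2-s^2}$, and check the scale-dependent coherence bound $\|F_\theta\phi_{j,n}\|_{L^2}^2\lesssim 2^{-j}$. The only (cosmetic) deviation is that you obtain the coherence bound from scale-invariance of $\|\cR_\theta\phi_{j,n}\|_\infty$ plus the $O(2^{-j})$ support length in $s$, whereas the paper applies Cauchy--Schwarz directly to the line integral; both yield the same estimate.
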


The above result corresponds to the case $\zeta=0$ in Theorem~\ref{thm:interpolation} (see Corollary~\ref{cor:main-result-D}), in which a smaller reconstruction error is obtained by making more measurements. In the next theorem, we consider the case with a minimum number of measurements (i.e., the case where $\zeta=1$, considered in Corollary~\ref{cor:dir-recovery}), proportional to the sparsity $s$ up to logarithmic factors. This is best suited for the recovery of $F u^\dagger$ and for the exact recovery of $u^\dagger$ in absence of noise, but yields worse error bounds in general.

\begin{theorem}
\label{thm:radon-dir-prob}
    Consider the setting of Theorem \ref{thm:radon-main-thm} and suppose that the same assumptions are satisfied. 

    There exist constants $C_0,C_1,C_2>0$, which depend only on the chosen wavelet basis in the case of the Radon transform and also on $d$ and $\rho$ in the case of the fan beam transform, such that the following holds. Let $W\coloneqq\diag(2^{j/2})_{j,n}$. Let $\widehat{u}$ be a solution of the minimization problem
    \begin{align*}
        \min_u \|W^{-1}\Phi{u}\|_1\quad\colon\quad 
         \frac{1}{m} \sum_{k=1}^m \|F_{\theta_k}u-y_k\|_{L^2}^2 
        \leq
        \lc \beta+ C_3 2^{-j_0/2} r \rc^2,
    \end{align*}
    where the minimum is taken over $\Span(\phi_{j,n})_{(j,n)\in\Lambda_{j_0}}$.
    
    Let $s\in[|\Lambda_{j_0}|]$, $s\ge 2$. If
    \begin{align} \label{eq:m_radon_optimal}
        m \geq C_0 s\max\{j_0\log^3{s},\log(1/\gamma)\},
    \end{align}
    then, with probability exceeding $1-\gamma$, the following recovery estimate holds:
    \begin{align*}
        \|F{u}^{\dagger}-F{\widehat{u}}\|_{L^2(\bS^1\times\bR)} \leq C_1 \frac{\sigma_s\lc P_{\Lambda_{j_0}}W^{-1}\Phi{u^\dagger} \rc_1}{\sqrt{s}} + C_2 (\beta+2^{-j_0/2}r).
    \end{align*}
    In particular, we have that
    \begin{equation}\label{eq:Radon_error_2}
        \|u^{\dagger}-\widehat{u}\|_{L^2} \leq C_1 2^{j_0/2} \frac{\sigma_s\lc P_{\Lambda_{j_0}}W^{-1}\Phi{u^\dagger} \rc_1}{\sqrt{s}} + C_2 (2^{j_0/2}\beta+r).
    \end{equation}
\end{theorem}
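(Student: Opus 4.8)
The plan is to obtain Theorem~\ref{thm:radon-dir-prob} as a specialization of Corollary~\ref{cor:dir-recovery} (the case $\zeta=1$ of Theorem~\ref{thm:interpolation}) to the Radon and fan-beam settings, so the real work is verifying that all the structural hypotheses of that corollary hold with the parameter $b=1/2$. First I would identify the ingredients: $\cH_1 = \overline{\Span(\phi_{j,n})_{(j,n)\in\Gamma}} \supseteq L^2(\cB_1)$, $\cHt = L^2(\bR)$ (resp. $L^2(-\tfrac\pi2,\tfrac\pi2)$ for the fan beam), $(\cD,\mu)=(\bS^1,\,\mathrm{d}\theta/2\pi)$ with $f_\nu\equiv 1$ (so $c_\nu=1$ and Assumption~\ref{ass-prob-low-est} is trivial), and the forward map $F\in\{\cR,\cD\}$, which is bounded by Lemma~\ref{thm-radon-continuity}. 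The weights are taken as $\omega_{j,n}\equiv 1$.

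The two substantive checks are the quasi-diagonalization property \eqref{eq-quasi-diag-gamma} with $b=1/2$ and the refined coherence bound of Assumption~\ref{ass-coherence-bound-D}. For quasi-diagonalization I would invoke Proposition~\ref{prop-cond-quasi-diag}: the classical estimate $\|\cR u\|_{L^2(\bS^1\times\bR)}^2 \asymp \|u\|_{H^{-1/2}(\bR^2)}^2$ for functions supported in a fixed compact set (Natterer; this is exactly \eqref{eq-suff-quasi-diag2} with $b=1/2$), together with the Littlewood--Paley property \eqref{eq-suff-quasi-diag1} for the $C^2$-regular compactly supported wavelet basis constructed above, valid for $|b|<q$ with $q=2$ by Remark~\ref{rem-regwav-quasidiag}. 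For the fan beam transform $\cD$, since it is merely a reparametrization of $\cR$ and the Jacobian relating the two is bounded above and below on $\cB_d$ with constants depending on $d,\rho$, the same two-sided Sobolev equivalence holds, which explains the dependence of the constants on $d$ and $\rho$. For the coherence bound, one needs $\|F_\theta \phi_{j,n}\|_{\cHt} \leq B\, d_{j,n}^{-1}$ with $1\le d_{j,n}\le 2^{bj}=2^{j/2}$; the natural choice is $d_{j,n}\asymp 2^{j/2}$, which amounts to the pointwise-in-$\theta$ estimate $\|\cR_\theta \phi_{j,n}\|_{L^2(\bR)} \lesssim 2^{-j/2}$. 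This follows from a direct scaling computation: $\cR_\theta$ of a function dilated by $2^j$ rescales the translation variable, and combined with the compact support and smoothness of $\psi$ one gets the $2^{-j/2}$ decay uniformly in $\theta$ and $n$ — this is the analogue for the Radon transform of the well-known decay of Fourier samples of wavelets used in MRI. I expect this pointwise coherence estimate to be the main obstacle, as it requires a careful scaling argument and uniform control in the angle $\theta$ (the fan-beam case additionally requires handling the curved geometry).

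With these verified, Corollary~\ref{cor:dir-recovery} applies directly with $b=1/2$, $\zeta=1$, $\max_{(j,n)\in\Lambda_{j_0}}(d_{j,n}^{-2}2^{2bj}) \asymp 1$ (since $d_{j,n}\asymp 2^{j/2}$ and $2^{bj}=2^{j/2}$), so $\tau\asymp s$, and $W=\diag(2^{bj})_{j,n}=\diag(2^{j/2})_{j,n}$ exactly as in the statement. Using Remark~\ref{rem:M-est} with \eqref{eq:radon-estimate-number-wavelets}, namely $M=|\Lambda_{j_0}|\asymp 2^{2j_0}$, the term $\log M$ in \eqref{eq:m_def_relcoherence} is replaced by $2j_0$, absorbed into the constant $C_0$, giving the sample complexity \eqref{eq:m_radon_optimal}. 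The recovery estimate from Corollary~\ref{cor:dir-recovery} reads $\|F\Phi^*x^\dagger - F\Phi^*\widehat x\|_{L^2} \lesssim \sigma_s(P_{\Lambda_{j_0}}W^{-1}x^\dagger)_1/\sqrt s + (\beta + 2^{-j_0/2}C_3 r)$, and translating via the analysis--synthesis equivalence for orthonormal bases (Remark~\ref{rem-an-vs-syn}), with $x^\dagger = \Phi u^\dagger$ and $F\Phi^* = \cR$ (resp. $\cD$), yields the first displayed inequality. Finally, the ``in particular'' statement \eqref{eq:Radon_error_2} follows from the quasi-diagonalization lower bound: $\|u^\dagger-\widehat u\|_{L^2}^2 = \|\Phi^*(x^\dagger-\widehat x)\|_{L^2}^2 \le c^{-1}\sum 2^{-2bj}|\,\cdot\,|^2 \cdot$ correction — more precisely one uses that $W^{-1}x^\dagger$ and $W^{-1}\widehat x$ are controlled in $\ell^2$ and $\|x^\dagger - \widehat x\|_2 \le 2^{bj_0}\|W^{-1}(x^\dagger-\widehat x)\|_2 = 2^{j_0/2}\|W^{-1}(x^\dagger-\widehat x)\|_2$ since the relevant indices lie in $\Lambda_{j_0}$, giving the extra $2^{j_0/2}$ factor in \eqref{eq:Radon_error_2}. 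This last step is routine once the structural hypotheses are in place.
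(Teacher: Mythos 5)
Your proposal is correct and follows essentially the same route as the paper: verify the quasi-diagonalization property with $b=1/2$ via Natterer's $H^{-1/2}$ equivalence and the Littlewood--Paley property (Proposition~\ref{prop-cond-quasi-diag}), verify Assumption~\ref{ass-coherence-bound-D} with $d_{j,n}=2^{j/2}$, and then invoke Corollary~\ref{cor:dir-recovery} with $\tau\asymp s$ and $\log M\asymp 2j_0$. The only cosmetic difference is that you derive the coherence bound $\|\cR_\theta\phi_{j,n}\|_{L^2}\lesssim 2^{-j/2}$ by a dilation/rescaling argument, whereas the paper uses Cauchy--Schwarz on the $O(2^{-j})$-length intersection of $\supp\phi_{j,n}$ with each line (which also handles the fan-beam geometry you flag but do not carry out); both are valid.
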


As anticipated, in Theorem~\ref{thm:radon-dir-prob} we have a lower number of measurements than in Theorem~\ref{thm:radon-main-thm}. On the other hand, the error estimate \eqref{eq:Radon_error_2} is possibly worse than the estimate \eqref{eq:Radon_error_1}, because of the quantity $2^{j_0/2}$ multiplying the first term of the right hand side. However, it should be observed that $\sigma_s\lc P_{\Lambda_{j_0}}W^{-1}\Phi{u^\dagger} \rc_1\le \sigma_s\lc P_{\Lambda_{j_0}}\Phi{u^\dagger} \rc_1$, and so the comparison is not straightforward.

As a direct consequence of Theorem~\ref{thm:radon-dir-prob}, we obtain the following exact recovery result. If there is no truncation error ($u^\dagger\in\Span(\phi_{j,n})_{(j,n)\in\Lambda_{j_0}}$, i.e., $r=0$), if $\Phi u^\dagger$ is $s$-sparse ($\sigma_s\lc P_{\Lambda_{j_0}}W^{-1}\Phi{u^\dagger} \rc_1= \sigma_s\lc \Phi{u^\dagger} \rc_1=0$) and if there is no noise ($\beta=0$), then
\[
\widehat u=u^\dagger,
\]
provided that $m$ satisfies \eqref{eq:m_radon_optimal}.

As in Section~\ref{sub:analysis_recovery}, 
we now derive explicit estimates in terms of the noise $\beta$ and the number of measurements $m$.

\begin{theorem}
\label{thm:radon-exp-est}
    Consider the setting of Theorem \ref{thm:radon-main-thm} and suppose that the same hypotheses are satisfied.
    
    Suppose that the following estimates hold:
    \begin{align*}
        \sigma_s(\Phi u^{\dagger})_1 \leq C s^{1/2-p},\qquad
        \|P_{\Lambda_j}^{\perp} \Phi u^{\dagger}\|_2 \leq C 2^{-aj},
    \end{align*}
    for some $a,p,C>0$.

    Fix $\gamma\in(0,1)$ and let $\beta\in(0,1)$ be sufficiently small. Let $j_0\coloneqq\lfloor 2/(2a+1)\log(1/\beta) \rfloor$ and let $m$ be sufficiently large.

    There exist constants $C_0,C_1>0$  depending only on the chosen wavelet basis and on $a,p,C$, logarithmically on $\gamma$ (and on $d$ and $\rho$ in the case of the fan beam transform) such that the following holds.
    
 Let $\widehat{x}$ be a solution of the minimization problem \eqref{eq:radon-min-prob}. Then, with probability exceeding $1-\gamma$, the following recovery estimate holds:
    \begin{align*}
        \|u^{\dagger}-\widehat{u}\|_{L^2} \leq C_1 \lc \frac{ \log^{4/p}(1/\beta) }{\beta^{ \frac{2p}{2a+1}} m^p} + \beta^{\frac{2a}{2a+1}}\rc.
    \end{align*}

    In particular, if $$m=\lfloor C_0\beta^{-\frac{2}{2a+1}-\frac{2a}{p(2a+1)}}\log^4(1/\beta) \rfloor.$$ then
    \begin{align*}
        \|u^{\dagger}-\widehat{u}\|_{L^2} \leq C_1 \beta^{\frac{2a}{2a+1}},
    \end{align*}
    or, equivalently,
    \begin{align*}
        \|u^{\dagger}-\widehat{u}\|_{L^2} \leq C_1\lc\frac{ \log^4{m} }{m}\rc^{ \frac{ap}{a+p}}.
    \end{align*}
\end{theorem}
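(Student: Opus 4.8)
The plan is to obtain Theorem~\ref{thm:radon-exp-est} as a direct specialization of the abstract Theorem~\ref{thm:exp-est} to the sparse Radon (and fan beam) setting, with the parameter choices $b=d=1/2$ and $c=2$. The only genuinely model-specific facts needed are the quasi-diagonalization property and the refined coherence bound of Assumption~\ref{ass-coherence-bound-D} for $\cR$ and $\cD$; both will already have been established while proving Theorems~\ref{thm:radon-main-thm} and \ref{thm:radon-dir-prob}, so here they are simply invoked with the sharp exponents.

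First I would collect the structural ingredients. By Proposition~\ref{prop-cond-quasi-diag}, using the classical Sobolev smoothing estimate $\|\cR u\|_{L^2(\bS^1\times\bR)}^2 \asymp \|u\|_{H^{-1/2}}^2$ for functions supported in a fixed ball (and the analogous estimate for $\cD$, differing only by constants depending on $d,\rho$; cf.\ Lemma~\ref{thm-radon-continuity}), together with the Littlewood--Paley property of the compactly supported $C^2$ --- hence $2$-regular --- wavelet basis, which holds for all $|b|<2$ and in particular for $b=1/2$ (see Remark~\ref{rem-regwav-quasidiag}), both forward maps satisfy the quasi-diagonalization property \eqref{eq-quasi-diag-gamma} with respect to $(\Phi,1/2)$. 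The projection--slice theorem combined with the Fourier decay of the wavelets at scale $j$ gives $\|\cR_\theta\phi_{j,n}\|_{L^2(\bR)}\lesssim 2^{-j/2}$ uniformly in $\theta$ and $n$ (similarly for $\cD$), i.e.\ Assumption~\ref{ass-coherence-bound-D} holds with the uniform density $f_\nu\equiv(2\pi)^{-1}$ (up to the harmless rescaling ensuring $f_\nu\le 1$), $\omega_{j,n}\equiv 1$ and $d_{j,n}=2^{j/2}$; this is the largest admissible value, since Assumption~\ref{ass-coherence-bound-D} forces $d_{j,n}\le 2^{bj}=2^{j/2}$, so in particular $d_{j,n}\ge 2^{dj}$ with $d=1/2=b$ and $2b-d=1/2$. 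Assumption~\ref{ass-prob-low-est} holds with $c_\nu=(2\pi)^{-1}$, and \eqref{eq:radon-estimate-number-wavelets} gives $|\Lambda_j|\asymp 2^{2j}$, i.e.\ $c=2$. Finally, since $\Phi$ is an isometric isomorphism and the weights are trivial, the analysis problem \eqref{eq:radon-min-prob} is equivalent to the synthesis problem \eqref{eq:main-result-D-minprobl} of Corollary~\ref{cor:main-result-D} under $x=\Phi u$ (Remark~\ref{rem-an-vs-syn}), with $x^\dagger=\Phi u^\dagger$, $\widehat x=\Phi\widehat u$, and the constraint levels $\beta+C_3 2^{-bj_0}r=\beta+C_3 2^{-j_0/2}r$ matching.

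With these identifications, Theorem~\ref{thm:radon-exp-est} becomes Theorem~\ref{thm:exp-est} specialized to $b=d=1/2$, $c=2$: the signal hypotheses $\sigma_s(\Phi u^\dagger)_1\le Cs^{1/2-p}$ and $\|P_{\Lambda_j}^\perp\Phi u^\dagger\|_2\le C2^{-aj}$ are the very same ones, and a routine substitution into the exponents of Theorem~\ref{thm:exp-est} gives $\frac{2b-d}{a+b}=\frac{1}{2a+1}$, $\frac{a}{a+b}=\frac{2a}{2a+1}$, $2\frac{2b-d}{a+b}+\frac{a}{p(a+b)}=\frac{2}{2a+1}+\frac{2a}{p(2a+1)}$, and $a+4bp-2dp=a+p$. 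Reading off $j_0=\lfloor\frac{1}{a+b}\log(1/\beta)\rfloor=\lfloor\frac{2}{2a+1}\log(1/\beta)\rfloor$ and using $\|x^\dagger-\widehat x\|_2=\|u^\dagger-\widehat u\|_{L^2}$ (isometry of $\Phi$) yields the three displayed estimates, including the balanced choice of $m$ and the equivalent $(\log^4 m/m)^{ap/(a+p)}$ form, directly from the ``in particular'' clause of Theorem~\ref{thm:exp-est}. I do not expect any substantial obstacle: the only delicate point is the harmonic-analytic content --- the smoothing estimate and the uniform-in-$\theta$ Fourier decay of the wavelets yielding the sharp $d_{j,n}=2^{j/2}$ --- which is precisely the material underlying Theorem~\ref{thm:radon-main-thm}; everything else is bookkeeping, in particular checking that the normalization of the uniform sampling density and of the constant $C_3$ are compatible with the standing assumptions of Theorem~\ref{thm:exp-est} (which presupposes $f_\nu\le 1$).
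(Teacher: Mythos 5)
Your proposal is correct and follows essentially the same route as the paper, whose proof of Theorem~\ref{thm:radon-exp-est} consists precisely of applying Theorem~\ref{thm:exp-est} in the Radon setting with $b=d=1/2$ and $c=2$, using the quasi-diagonalization properties \eqref{eq:radon-quasi-diag}--\eqref{eq:radon-quasi-diag-bis} and the coherence bound \eqref{eq:qd-radon-div}; all of your exponent substitutions check out. The only cosmetic differences are that the paper derives $\|\cR_\theta\phi_{j,n}\|_{L^2}\lesssim 2^{-j/2}$ by Cauchy--Schwarz on the length of $\supp\phi_{j,n}$ intersected with a line (rather than via the projection--slice theorem), and that it normalizes $\mu$ to be a probability measure so that $f_\nu\equiv 1$ and $c_\nu=1$.
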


Analogous estimates have been obtained for the sparse Radon transform in the context of statistical inverse learning \cite{bubba-burger-helin-ratti-2021,bubba-ratti-2022}. These are similar in spirit to the ones presented above, but comparison is non-trivial since the setting is substantially different as sparsity is not considered there, while here it plays a crucial role.

\begin{example}[Cartoon-like images] 
\label{ex:cartoon-like}
    In the case of cartoon-like images supported in $\cB_1$ (see \cite[Section 9.2.4]{Ma} -- in short, $C^2$ signals apart from $C^2$ edges), with reference to the notation of Theorem \ref{thm:radon-exp-est}, we have that $a=1/2$ and $p$-compressibility holds with $p=1/2$ up to logarithmic terms -- see the end of Section~\ref{sec:proof-radon}. There exist constants $C_0,C_1>0$, which only depend on the chosen wavelet basis in the case of the Radon transform and also on $d$ and $\rho$ in the case of the fan beam transform, such that the following bound on the $L^2$ error holds, up to logarithmic terms:
    \begin{align}
    \label{eq:ex-cartoon-like-est}
        \|u^{\dagger}-\widehat{u}\|_{L^2} \leq C_1\lc\frac{1}{\beta^{1/2}m^{1/2}}+\beta^{1/2}\rc.
    \end{align}
    Choosing $m=\lfloor C_0\beta^{-2}\log^4(1/\beta) \rfloor$, we get that, up to logarithmic terms,
    \begin{align*}
        \|u^{\dagger}-\widehat{u}\|_{L^2} \leq C_1\beta^{1/2},\qquad
        \|u^{\dagger}-\widehat{u}\|_{L^2} \leq C_1\lc \frac{1}{m} \rc^{1/4}.
    \end{align*}
    We refer the reader to Section~\ref{sec:proof-radon} for a proof of \eqref{eq:ex-cartoon-like-est}.
\end{example}

\section{Proofs of the main results}\label{sec-proofs}

In this section, we will usually write $X \lesssim Y$ if the underlying inequality holds up to a positive constant $C>0$, that is $X \le CY$. Similarly, we write $X \gtrsim Y$ if $X\ge CY$ for some $C>0$. In particular, $X \asymp Y$ means that both $X \lesssim Y$ and $X \gtrsim Y$ hold. These constants may depend in general on other quantities related to the results from which the inequalities are inferred, the dependence being made explicit in the corresponding statements.

\subsection{Preliminary results}\label{sec-weight-extra}
Determining the best $s$-$\omega$-sparse approximation of a vector with respect to $\ell_{\omega}^p$, introduced in Section \ref{sec-weight-set}, is a quite challenging problem in practice. A surrogate notion of quasi-best sparse approximation has been introduced in \cite[Section 3]{RW}. In fact, the latter construction happens to be powerful enough for the purposes of the proofs given below, hence we briefly review the basic facts in this connection for the benefit of the reader. 

Given $x\in\ell^2(\Gamma)$, $\omega\in[1,+\infty)^{\Gamma}$ and a positive integer $s$, consider the permutation $\pi\colon \Gamma\rightarrow \Gamma$ associated with the non-increasing rearrangement of $(|x_i| \omega_i^{-1})_{i\in \Gamma}$. Let $i_0\in \Gamma$ be the largest index such that
\begin{equation}
    \omega(S_{i_0}) \leq s, \quad S_{i_0} \coloneqq  \{\pi(1),\dots,\pi(i_0)\}.
\end{equation}
We thus define $x_{S_{i_0}}$ to be the \textit{quasi-best s-$\omega$-sparse approximation to $x$}. Notice that this notion does not depend on a $\ell_{\omega}^p$ norm, whereas the companion \textit{error of quasi-best s-$\omega$-sparse approximation to $x\in\ell^2(\Gamma)$} with respect to $\ell_{\omega}^p$  is defined by $\tilde{\sigma}_s(x)_{p,\omega} \coloneqq  \|x_{S_{i_0}^c}\|_{p,\omega}$. From the very definition we have that
\begin{equation}
    \sigma_s(x)_{p,\omega} \leq \tilde{\sigma}_s(x)_{p,\omega}.
\end{equation}

For later use we recall the following \textit{Stechkin-type inequality} -- see \cite[Lemma 3.12]{adcock_sparsebook} for a proof. If $s>0$ and $0<p<q\le 2$, then
\begin{equation}
\label{eq-stecchini}
\sigma_s(x)_{q,\omega} \le \tilde{\sigma}_s(x)_{q,\omega} \leq s^{1/q-1/p}\|x\|_{p,\omega},\qquad x\in \ell^2(\Gamma).
\end{equation}

To simplify the notation in the proofs, in what follows we will also introduce the following sampling operator $A\colon\ell^2(\Gamma)\rightarrow\cHt^m$ associated with a sample $(t_1,\dots,t_m)\in\cD^m$:
\begin{align*}
    Ax \coloneqq \frac{1}{\sqrt{m}}\lc F_{t_1}\Phi^* x,\dots,F_{t_m}\Phi^* x \rc.
\end{align*}
With an abuse of notation, we will use $A$ to denote also the restriction $A\colon\ell^2(\Lambda)\rightarrow\cHt^m$ to some finite $\Lambda\subset\Gamma$.

Moreover, with reference to problem \eqref{eq-min-probl1}, we define $\varepsilon\coloneqq \frac{1}{\sqrt{m}}\lc \varepsilon_1,\dots,\varepsilon_m \rc$ and $y\coloneqq \frac{1}{\sqrt{m}}\lc y_1,\dots,y_m\rc$.
Notice that the condition $\max_k \|\varepsilon_k\|_{\cHt}\leq \beta$ implies that
\begin{align*}
    \|\varepsilon\|_{\cHt^m}^2 =
    \frac{1}{m} \sum_{k=1}^m \|\varepsilon_k\|_{\cHt}^2 \leq \beta^2.
\end{align*}
Moreover, the following identity holds for $x\in\ell^2(\Lambda)$:
\begin{align*}
    \|Ax-y\|_{\cHt^m}^2 = \frac{1}{m}\sum_{k=1}^m \|F_{t_k}\Phi^*\iota_{\Lambda}x-y_k\|^2,
\end{align*}
where $y_k = F_{t_k}\Phi^*x^{\dagger}+\varepsilon_k$, for $k=1,\dots,m$, as defined in Section~\ref{sec-setting}.

\subsection{The robust null space property}\label{sec-rnsp} 

The typical strategy to obtain recovery estimates in a RIP-based setting requires to initially introduce a suitably designed version of the \textit{robust null space property} (RNSP), the latter being crucial to ensure stable distance bounds and thus uniform recovery results. We recall that all the Hilbert spaces are assumed to be complex and separable, unless otherwise noted.

\begin{definition}\label{def-rnsp}
Let $\cH$ be a Hilbert space and let $\Lambda$ be a finite or countable set. Consider a vector of weights $\omega\in[1,+\infty)^{\Lambda}$, an integer $s\ge 1$, and real parameters $0<\rho<1$, $\kappa>0$. An operator $A\in\cL(\ell^2(\Lambda),\cH^m)$ is said to satisfy the robust null space property (RNSP) with respect to $(\omega,\rho,\kappa,s)$ if
\begin{align*}
    \|x_S\|_2 \leq \frac{\rho}{\sqrt{s}}\|x_{S^c}\|_{1,\omega}+\kappa\|Ax\|_{\cH^m},\quad \forall \, x\in\ell^2(\Lambda),\ S\subset\Lambda : \omega(S)\leq s.
\end{align*}
\end{definition}
\begin{remark}
\label{rem-rnsp}
A sufficient condition in order for $A$ to satisfy the RNSP with respect to $(\omega,\rho,\kappa,s)$ can be stated as follows: there exists $0<\rho'<1/2$ and $\kappa'>0$ such that
\begin{equation}
\label{eq-rem-rnsp}
    \|x_S\|_2\leq\frac{\rho'}{\sqrt{s}}\|x\|_{1,\omega}+\kappa'\|Ax\|_{\cH^m},\quad \forall \, x\in\ell^2(\Lambda),\ S\subset\Lambda : \omega(S)\leq s.
\end{equation} This is indeed an easy consequence of the estimate
	\[ \| x \|_{1,\omega} = \| x_S \|_{1,\omega} + \| x_{S^c}\|_{1,\omega} \le \sqrt{s} \| x_S \|_2 + \| x_{S^c}\|_{1,\omega}, \]
	which follows in turn by the Cauchy-Schwarz inequality. The relationships between the involved constants are given by
	\[ \rho = \frac{\rho'}{1-\rho'}, \quad \kappa = \frac{\kappa'}{1-\rho'}. \]
\end{remark}

We now show that the RNSP implies suitable distance bounds. Indeed, the proof of the following results is largely inspired by standard arguments in the theory of compressed sensing -- see for instance \cite[Theorem 5]{F} and \cite[Theorem 4.2]{RW}. The proof is essentially identical to \cite[Lemma 6.24]{adcock2022efficient}; adapting it to the case of Hilbert-valued measurement requires only some minor modifications.

\begin{theorem}[\textbf{RNSP $\Rightarrow$ distance bounds}]
\label{thm-rnsp-rec}
Let $\cH$ be a Hilbert space and assume that $A\in\cL(\ell^2(\Lambda),\allowbreak\cH^m)$ satisfies the RNSP with respect to $(\omega,\rho,\kappa,s)$ as in Definition~\ref{def-rnsp}. Then, for all $x,z\in\ell_{\omega}^1(\Lambda)$, 
\begin{align*}
    \|x-z\|_{1,\omega}\leq c_1'\lc \|z\|_{1,\omega}-\|x\|_{1,\omega}+2\sigma_s(x)_{1,\omega} \rc + c_2'\sqrt{s}\|A(x-z)\|_{\cH^m},
\end{align*}
where we set
\begin{align*}
    c_1' = \frac{1+\rho}{1-\rho},\quad c_2'= \frac{2\kappa}{1-\rho}.
\end{align*}
Moreover
\begin{align*}
    \|x-z\|_2\leq \frac{c_1}{\sqrt{s}}\lc \|z\|_{1,\omega}-\|x\|_{1,\omega}+2\sigma_s(x)_{1,\omega} \rc + c_2\|A(x-z)\|_{\cH^m},
\end{align*}
where
\begin{align*}
    c_1 = (\rho+1)c_1',\quad c_2 =(\rho+1)c_2'+\kappa.
\end{align*}
\end{theorem}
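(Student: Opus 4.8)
\textbf{Proof plan for Theorem~\ref{thm-rnsp-rec}.}
The plan is to follow the classical RNSP-to-distance-bound argument (as in \cite[Theorem 4.2]{RW} or \cite[Lemma 6.24]{adcock2022efficient}), adapting the bookkeeping to the weighted norm and to Hilbert-valued measurements. First I would fix $x,z\in\ell_\omega^1(\Lambda)$, set $v\coloneqq x-z$, and choose a set $S\subset\Lambda$ with $\omega(S)\le s$ realizing (or nearly realizing) the best weighted $s$-sparse approximation to $x$, so that $\|x_{S^c}\|_{1,\omega}=\sigma_s(x)_{1,\omega}$ up to the surrogate notion recalled in Section~\ref{sec-weight-extra}. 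The starting point is the standard cone-type estimate: writing $\|z\|_{1,\omega}=\|z_S\|_{1,\omega}+\|z_{S^c}\|_{1,\omega}$ and using the triangle inequality in the forms $\|z_S\|_{1,\omega}\ge \|x_S\|_{1,\omega}-\|v_S\|_{1,\omega}$ and $\|z_{S^c}\|_{1,\omega}\ge \|v_{S^c}\|_{1,\omega}-\|x_{S^c}\|_{1,\omega}$, one obtains
\[
\|v_{S^c}\|_{1,\omega}\le \|z\|_{1,\omega}-\|x\|_{1,\omega}+2\|x_{S^c}\|_{1,\omega}+\|v_S\|_{1,\omega}.
\]
This is the key inequality that transfers the objective-value gap $\|z\|_{1,\omega}-\|x\|_{1,\omega}$ into control of the tail of $v$.

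Next I would invoke the RNSP with respect to $(\omega,\rho,\kappa,s)$ applied to $v$ and the set $S$: $\|v_S\|_2\le \frac{\rho}{\sqrt s}\|v_{S^c}\|_{1,\omega}+\kappa\|Av\|_{\cH^m}$, and combine it with the Cauchy--Schwarz bound $\|v_S\|_{1,\omega}\le\sqrt{\omega(S)}\,\|v_S\|_2\le\sqrt s\,\|v_S\|_2$. Substituting into the cone estimate yields
\[
\|v_{S^c}\|_{1,\omega}\le \|z\|_{1,\omega}-\|x\|_{1,\omega}+2\sigma_s(x)_{1,\omega}+\rho\|v_{S^c}\|_{1,\omega}+\kappa\sqrt s\,\|Av\|_{\cH^m},
\]
and since $\rho<1$ we can absorb the $\rho\|v_{S^c}\|_{1,\omega}$ term on the left, giving
\[
\|v_{S^c}\|_{1,\omega}\le \frac{1}{1-\rho}\Bigl(\|z\|_{1,\omega}-\|x\|_{1,\omega}+2\sigma_s(x)_{1,\omega}\Bigr)+\frac{\kappa\sqrt s}{1-\rho}\|Av\|_{\cH^m}.
\]
The first claimed estimate then follows by adding $\|v_S\|_{1,\omega}\le\sqrt s\,\|v_S\|_2\le \rho\|v_{S^c}\|_{1,\omega}+\kappa\sqrt s\,\|Av\|_{\cH^m}$ to $\|v_{S^c}\|_{1,\omega}$ and collecting terms, which produces exactly the constants $c_1'=\frac{1+\rho}{1-\rho}$ and $c_2'=\frac{2\kappa}{1-\rho}$.

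For the $\ell^2$ estimate I would split $\|v\|_2\le\|v_S\|_2+\|v_{S^c}\|_2$ and bound the tail piece by a weighted Stechkin-type inequality: since $\omega(S)\le s$, one has $\|v_{S^c}\|_2\le \frac{1}{\sqrt s}\|v_{S^c}\|_{1,\omega}$ (this is the weighted analogue of $\|v_{S^c}\|_2\le s^{-1/2}\|v_{S^c}\|_1$, and it holds by the non-increasing rearrangement argument recalled around \eqref{eq-stecchini}, or more directly because each retained coordinate of $v_{S^c}$ is dominated in the rearrangement). Then $\|v_S\|_2$ is again controlled via RNSP by $\frac{\rho}{\sqrt s}\|v_{S^c}\|_{1,\omega}+\kappa\|Av\|_{\cH^m}$, so altogether $\|v\|_2\le\frac{1+\rho}{\sqrt s}\|v_{S^c}\|_{1,\omega}+\kappa\|Av\|_{\cH^m}$; plugging in the bound on $\|v_{S^c}\|_{1,\omega}$ just derived gives the stated estimate with $c_1=(\rho+1)c_1'$ and $c_2=(\rho+1)c_2'+\kappa$.

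The argument is essentially a bookkeeping exercise and presents no serious conceptual obstacle; the one point requiring a little care is ensuring that the set $S$ can simultaneously be taken to (nearly) attain $\sigma_s(x)_{1,\omega}$ \emph{and} satisfy $\omega(S)\le s$ so that the RNSP and the weighted Cauchy--Schwarz/Stechkin inequalities both apply on the \emph{same} $S$ -- this is precisely where the surrogate ``quasi-best'' weighted sparse approximation of Section~\ref{sec-weight-extra} is convenient, since $\sigma_s(x)_{1,\omega}\le\tilde\sigma_s(x)_{1,\omega}$ lets us work with a genuinely admissible support set. The passage from complex scalars to $\cH$-valued measurements affects nothing, since only $\|Av\|_{\cH^m}$ enters and the triangle/Cauchy--Schwarz inequalities used are on the coefficient side in $\ell^2(\Lambda)$.
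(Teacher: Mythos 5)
Your $\ell^1_\omega$ part is correct and follows the standard route (cone inequality, RNSP on the support $S$ of a (near‑)best weighted $s$-sparse approximation to $x$, weighted Cauchy--Schwarz, absorption of $\rho\|v_{S^c}\|_{1,\omega}$); the constants $c_1',c_2'$ come out exactly as stated. The $\ell^2$ step, however, has a genuine gap. The inequality $\|v_{S^c}\|_2\le s^{-1/2}\|v_{S^c}\|_{1,\omega}$ is false for the set $S$ you are using: $S$ is adapted to $x$, not to $v$, so no rearrangement argument is available, and even in the unweighted case the inequality fails for an arbitrary $S$ with $\omega(S)\le s$ (take $v_{S^c}=e_i$ with $\omega_i=1$: the left side is $1$ and the right side is $s^{-1/2}$). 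The Stechkin-type bound \eqref{eq-stecchini} controls $\tilde\sigma_s(v)_{2,\omega}$, i.e.\ the tail over the complement of the \emph{quasi-best} support of $v$, and its right-hand side involves the \emph{full} norm $\|v\|_{1,\omega}$, not the tail norm: one has $\|v_{T^c}\|_2\le s^{-1/2}\|v\|_{1,\omega}$ only when $T$ is the quasi-best $s$-$\omega$-sparse support of $v$. This is also visible in the constants: your chain, if valid, would yield $c_1'/\sqrt s$ and $c_2'$ rather than the stated $(1+\rho)c_1'/\sqrt s$ and $(1+\rho)c_2'+\kappa$, which is a sign that the intended argument is different.

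The repair is standard and short. Introduce a \emph{second} index set $T$, the quasi-best $s$-$\omega$-sparse support of $v=x-z$ (so $\omega(T)\le s$ by construction). Apply the RNSP to $v$ and $T$ to get $\|v_T\|_2\le \frac{\rho}{\sqrt s}\|v_{T^c}\|_{1,\omega}+\kappa\|Av\|_{\cH^m}\le \frac{\rho}{\sqrt s}\|v\|_{1,\omega}+\kappa\|Av\|_{\cH^m}$, and apply \eqref{eq-stecchini} with $q=2$, $p=1$ to get $\|v_{T^c}\|_2=\tilde\sigma_s(v)_{2,\omega}\le s^{-1/2}\|v\|_{1,\omega}$ (recall $\|\cdot\|_{2,\omega}=\|\cdot\|_2$). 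Summing gives $\|v\|_2\le \frac{1+\rho}{\sqrt s}\|v\|_{1,\omega}+\kappa\|Av\|_{\cH^m}$, and inserting the already proved $\ell^1_\omega$ bound for $\|v\|_{1,\omega}$ produces exactly $c_1=(1+\rho)c_1'$ and $c_2=(1+\rho)c_2'+\kappa$. One further small remark: the quasi-best approximation is the right tool for $v$ in this second step, but it goes the wrong way for $x$ in the cone step, since $\tilde\sigma_s(x)_{1,\omega}\ge\sigma_s(x)_{1,\omega}$; there you should instead take $S$ to be an (almost) exact minimizer of $\|x_{S^c}\|_{1,\omega}$ over $\omega(S)\le s$ and, if $\Lambda$ is infinite, pass to the limit in the resulting estimate.
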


\subsection{The generalized restricted isometry property}
\label{sec-grip}
We now define a generalized version of the standard restricted isometry property (RIP), which corresponds to the G-RIPL without levels, and with a weighted sparsity, introduced in \cite[Definition 3.5]{AAH}.

\begin{definition}[\textbf{g-RIP}]\label{def-grip}
Let $\cH$ be a Hilbert space. Given $G\in\cL(\ell^2(\Lambda))$, $0\leq\delta<1$, $\lambda>0$ and a vector of weights $\omega\in[1,+\infty)^{\Lambda}$, we say that $A\in\cL(\ell^2(\Lambda),\cH^m)$, $m\in\bN$, satisfies the \textit{generalized restricted isometry property (g-RIP)} with respect to $(G,\omega,\delta,\lambda)$ if
\begin{equation}
\label{eq-grip}
    (1-\delta)\|Gx\|_2^2 \leq \|Ax\|_{\cH^m}^2 \leq (1+\delta)\|Gx\|_2^2,\quad x\in\ell^2(\Lambda),\ \|x\|_{0,\omega}\leq\lambda.
\end{equation}
\end{definition}

\begin{remark}[\textbf{Diagonal invariance of the g-RIP}]
\label{rem-diag-inv}
    Consider the case where $\Lambda$ is finite. Let $Z\coloneqq(z_i)_{i\in\Lambda}$ be a diagonal matrix with $z_i\neq 0$. Since $\supp(x')=\supp(Zx')$, we have $\|x'\|_{0,\omega}=\|Zx'\|_{0,\omega}$. As a result, after the change of variable $x = Zx'$, it is clear that $A$ satisfies the g-RIP with respect to $(G,\omega,\delta,\lambda)$ if and only if $AZ$ satisfies the g-RIP with respect to $(GZ,\omega,\delta,\lambda)$.
\end{remark}
We now provide sufficient conditions under which the g-RIP implies the RNSP. We also refer the reader to \cite[Theorem 5.5]{AAH} for a version of this result adapted to sparsity in levels.

\begin{theorem}[\textbf{g-RIP $\Rightarrow$ RNSP}]
\label{thm-grip-rnsp}
Fix $\rho'\in(0,1/2)$ and $\delta\in(0,1)$. Assume that $A\in\cL(\ell^2(\Lambda),\cH^m)$ satisfies the g-RIP with respect to $(G,\omega,\delta,\lambda)$ for some invertible $G\in \cL(L^2(\Lambda))$, weights $\omega\in[1,+\infty)^{\Lambda}$ and $\lambda\in\bR^+$. For any $s\geq\|\omega\|_{\infty}^2/4$, if
\begin{equation}
\label{eq-thm-grip-rnsp-02}
    \lambda \geq  \frac{5}{\rho'^2}\frac{1+\delta}{1-\delta}\|G^{-1}\|^2 \|G\|^2 s,
\end{equation}
then $A$ satisfies the RNSP with respect to $(\omega,\rho,\kappa,s)$, where
\begin{align*}
    \rho =\frac{\rho'}{1-\rho'},\quad \kappa = \frac{\|G^{-1}\|}{(1-\rho')\sqrt{1-\delta}}.
\end{align*}
\end{theorem}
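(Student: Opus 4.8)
The goal is to show that the g-RIP (with respect to an invertible $G$) implies the RNSP, in the weighted setting. The natural strategy is to reduce to the unweighted, $G=I$ situation by using the diagonal invariance of the g-RIP (Remark~\ref{rem-diag-inv}) together with the sufficient condition for the RNSP recalled in Remark~\ref{rem-rnsp}: it suffices to establish the bound $\|x_S\|_2 \le \frac{\rho'}{\sqrt s}\|x\|_{1,\omega} + \kappa'\|Ax\|_{\cH^m}$ for all $x\in\ell^2(\Lambda)$ and all $S\subset\Lambda$ with $\omega(S)\le s$. First I would fix such an $x$ and $S$, and split $x_{S^c}$ into blocks $x_{S_1},x_{S_2},\dots$ of (weighted) size roughly $\lambda - s$ each, ordered by decreasing magnitude of the entries $|x_i|\omega_i^{-1}$ (the standard ``sparsification into buckets'' device, adapted to the weighted-sparsity notion as in \cite[Section~3]{RW} or \cite[Lemma~6.24]{adcock2022efficient}). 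The key combinatorial point is that $\omega(S\cup S_j)\le \lambda$, so the g-RIP applies to each $x_{S\cup S_j}$, and that $\|x_{S_{j+1}}\|_2 \le \|\,|x|\omega^{-1}\|$-type bounds let one estimate $\sum_{j\ge1}\|x_{S_j}\|_2$ by $\frac{c}{\sqrt{\lambda-s}}\|x_{S^c}\|_{1,\omega}$.

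\textbf{Main chain of inequalities.} With $T = S\cup S_1$ (the ``head'' plus the first bucket), I would write, using the lower g-RIP bound on $x_T$ and then expanding $Ax = Ax_T + \sum_{j\ge 2}Ax_{S_j}$,
\begin{align*}
\sqrt{1-\delta}\,\|Gx_T\|_2 &\le \|Ax_T\|_{\cH^m} \le \|Ax\|_{\cH^m} + \sum_{j\ge2}\|Ax_{S_j}\|_{\cH^m}\\
&\le \|Ax\|_{\cH^m} + \sqrt{1+\delta}\sum_{j\ge2}\|Gx_{S_j}\|_2.
\end{align*}
Then $\|x_S\|_2 \le \|x_T\|_2 = \|G^{-1}Gx_T\|_2 \le \|G^{-1}\|\,\|Gx_T\|_2$, and on the other side $\|Gx_{S_j}\|_2 \le \|G\|\,\|x_{S_j}\|_2$. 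Combining these,
\[
\|x_S\|_2 \le \frac{\|G^{-1}\|}{\sqrt{1-\delta}}\|Ax\|_{\cH^m} + \|G^{-1}\|\,\|G\|\sqrt{\tfrac{1+\delta}{1-\delta}}\sum_{j\ge2}\|x_{S_j}\|_2.
\]
Finally I would invoke the bucket estimate $\sum_{j\ge2}\|x_{S_j}\|_2 \le \frac{c}{\sqrt{\lambda - s}}\|x_{S^c}\|_{1,\omega} \le \frac{c}{\sqrt{\lambda-s}}\|x\|_{1,\omega}$ — here the precise constant $c$ (essentially $1$, after correctly accounting for the weighted block sizes and the factor relating $\lambda-s$ to $\lambda$ when $\lambda \gtrsim s$) and the hypothesis $s\ge\|\omega\|_\infty^2/4$ (needed so each bucket can be chosen of weighted size comparable to $\lambda$ despite the granularity imposed by individual weights $\omega_i^2 \le \|\omega\|_\infty^2$) are what make the coefficient come out as $\rho' = $ (const)$\cdot\|G^{-1}\|\|G\|\sqrt{\tfrac{1+\delta}{1-\delta}}\cdot\frac{\sqrt s}{\sqrt{\lambda-s}}$. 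The assumption \eqref{eq-thm-grip-rnsp-02}, i.e.\ $\lambda \ge \frac{5}{\rho'^2}\frac{1+\delta}{1-\delta}\|G^{-1}\|^2\|G\|^2 s$, is exactly calibrated so that this coefficient is $\le \rho'$ (the ``$5$'' absorbing the bucketing constant and the $\lambda/(\lambda-s)$ correction). Reading off $\kappa' = \|G^{-1}\|/\sqrt{1-\delta}$, Remark~\ref{rem-rnsp} then yields the RNSP with $\rho = \rho'/(1-\rho')$ and $\kappa = \kappa'/(1-\rho') = \frac{\|G^{-1}\|}{(1-\rho')\sqrt{1-\delta}}$, as claimed.

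\textbf{Expected obstacle.} The routine RIP algebra is standard; the delicate point is the weighted bucketing argument. In the unweighted case one partitions $S^c$ into blocks of exactly $\lambda - s$ indices and uses the non-increasing rearrangement to bound $\|x_{S_{j+1}}\|_\infty \le \|x_{S_j}\|_1/(\lambda-s)$, hence $\|x_{S_{j+1}}\|_2 \le \|x_{S_j}\|_1/\sqrt{\lambda-s}$. In the weighted setting, blocks have weighted size $\approx \lambda - s$ but possibly not exactly (because adding one more index may overshoot by up to $\omega_i^2$), so one needs $\lambda - s$ to be large compared to $\|\omega\|_\infty^2$ — guaranteed since $\lambda \ge \frac{5}{\rho'^2}\cdots s$ and $s \ge \|\omega\|_\infty^2/4$ — and one must track the weighted $\ell^1$ norms $\|x_{S_j}\|_{1,\omega}$ rather than plain $\ell^1$ norms, using that $|x_i| = (|x_i|\omega_i^{-1})\omega_i$ and that the quantities $|x_i|\omega_i^{-1}$ are the ones being rearranged. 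This bookkeeping is carried out in \cite[Section~3]{RW} and \cite[Lemma~6.24]{adcock2022efficient}; I would follow those references, adapting only the trivial point that here the measurements take values in $\cH^m$ rather than $\bC^m$, which affects nothing since only the Hilbert-space norm $\|\cdot\|_{\cH^m}$ and the triangle inequality are used.
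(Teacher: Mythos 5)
Your proposal is correct and follows essentially the same route as the paper's proof: reduction via Remark~\ref{rem-rnsp}, partition of $S^c$ into weighted buckets ordered by the non-increasing rearrangement of $|x_i|\omega_i^{-1}$, the g-RIP applied to the head plus the first bucket, a Stechkin-type bound $\|x_{\Omega_j}\|_2\le\tilde\lambda^{-1/2}\|x_{\Omega_{j-1}\cup\Omega_j}\|_{1,\omega}$ on the tail buckets, and the calibration of $\lambda$ (the paper takes bucket size $\tilde\lambda=\frac{4}{\rho'^2}\frac{1+\delta}{1-\delta}\|G^{-1}\|^2\|G\|^2 s$, the remaining $\ge s$ of $\lambda$ accommodating $S$, and the telescoping sum contributes the factor $2$ that your ``bucketing constant'' refers to). The roles you assign to $s\ge\|\omega\|_\infty^2/4$ and to the constant $5$ match the paper's argument.
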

\begin{proof}
In view of Remark \ref{rem-rnsp}, it suffices to prove that \eqref{eq-rem-rnsp} is satisfied. 

Fix $x\in\ell^2(\Lambda)$ and $S\subset\Lambda$ with $\omega(S)\leq s$. Consider $x_{S^c}$. Having in mind the construction of the quasi-best $\lambda$-$\omega$-sparse approximation to $x$ in Section \ref{sec-weight-extra}, we now proceed as follows. 

Let $\pi\colon \bN\rightarrow S^c$ be the enumeration yielding the non-increasing rearrangement of $(|x_k|\omega_k^{-1})_{k\in S^c}$, so that $|x_{\pi(i)}|\omega_{\pi(i)}^{-1}\geq |x_{\pi(i+1)}|\omega_{\pi(i+1)}^{-1}$. Set $i_0=0$ and for $j\geq 1$ iteratively define $i_j\in \bN$ to be the largest index such that $i_j>i_{j-1}$ and
\begin{equation}
    \omega(\Omega_j)\leq \tilde \lambda,\quad\Omega_j\coloneqq\{\pi(i_{j-1}+1),\dots,\pi(i_j)\},
\end{equation}
where
\begin{equation}
\label{eq:thm-grip-rnps-03}
    \tilde \lambda = \frac{4}{\rho'^2}\frac{1+\delta}{1-\delta}\|G^{-1}\|^2 \|G\|^2 s.
\end{equation}

Note that $\lambda \ge \tilde \lambda+ \frac{1}{\rho'^2}\frac{1+\delta}{1-\delta}\|G^{-1}\|^2\|G\|^2 s \geq \tilde \lambda+s$ since $\rho' \in (0,1/2)$.
We stress that each $\Omega_j$ is non-empty, thanks to the the fact that $\tilde \lambda \geq 4s \geq\|\omega\|_{\infty}^2$ by assumption, and it is finite since $\omega_i\geq 1$.

It is straightforward to realize that $\cup_{j\geq 1}\Omega_j=S^c$. Moreover, for $j\geq 1$, it follows from the definition that $x_{\Omega_j}$ is the quasi-best $\tilde \lambda$-$\omega$-sparse approximation to $x_{(\Omega_j\cup\Omega_{j+1})}$. The Stechkin-type inequality \eqref{eq-stecchini} gives
\begin{equation}
\label{eq-thm-grip-rnsp-01}
    \|x_{\Omega_j}\|_2 = \tilde{\sigma}_{\tilde \lambda}(x_{(\Omega_{j-1}\cup\Omega_{j})})_{2,\omega} \leq
    \frac{1}{\sqrt{\tilde \lambda}}\|x_{(\Omega_{j-1}\cup\Omega_{j})}\|_{1,\omega},\quad j\geq 2.
\end{equation}
Since $x_S+x_{\Omega_1}=x-\sum_{j\geq 2} x_{\Omega_j}$ by construction, the g-RIP for $A$ yields
\begin{equation}\label{eq:g1}
\begin{split}
    \|x_S\|_2\leq \|x_S+x_{\Omega_1}\|_2 &\leq \|G^{-1}\| \|G(x_S+x_{\Omega_1})\|_2 \\ &\leq
    \frac{\|G^{-1}\|}{\sqrt{1-\delta}} \|A(x_S+x_{\Omega_1})\|_{\cH^m} \\ &\leq
    \frac{\|G^{-1}\|}{\sqrt{1-\delta}} \left(\sum_{j\geq 2} \|Ax_{\Omega_j}\|_{\cH^m} + \|Ax\|_{\cH^m}\right).
\end{split}
\end{equation}
Now, by \eqref{eq-thm-grip-rnsp-01}, we have 
\begin{equation}\label{eq:g2}
    \|Ax_{\Omega_j}\|_{\cH^m} \leq \sqrt{1+\delta} \|Gx_{\Omega_j}\|_2 \leq
    \sqrt{1+\delta} \|G\| \|x_{\Omega_j}\|_2  \leq
    \sqrt{1+\delta} \|G\| \frac{1}{\sqrt{\tilde \lambda}}  \|x_{(\Omega_{j-1}\cup\Omega_{j})}\|_{1,\omega}.
\end{equation}
We have
\begin{align}\label{eq:g3}
    \sum_{j\geq 2} \|x_{(\Omega_{j-1}\cup\Omega_{j})}\|_{1,\omega} \leq
    \sum_{j\geq 2} \|x_{\Omega_{j-1}}\|_{1,\omega} +\|x_{\Omega_{j}}\|_{1,\omega} \leq 2\|x\|_{1,\omega}.
\end{align}
Combining \eqref{eq:g1}, \eqref{eq:g2} and \eqref{eq:g3} we obtain
\begin{equation*}
   \|x_S\|_2 \leq
    \frac{\sqrt{1+\delta}}{\sqrt{1-\delta}} \|G^{-1}\| \|G\| \frac{2\sqrt{s}}{\sqrt{\tilde \lambda}}
    \frac{1}{\sqrt{s}} \|x\|_{1,\omega} + \frac{\|G^{-1}\|}{\sqrt{1-\delta}} \|Ax\|_{\cH^m}.
\end{equation*}
It is now enough to realize that \eqref{eq:thm-grip-rnps-03} implies
\begin{equation}
    \frac{\sqrt{1+\delta}}{\sqrt{1-\delta}} \|G^{-1}\| \|G\| \frac{2\sqrt{s}}{\sqrt{\tilde \lambda}} = \rho'.
\end{equation}
The proof of \eqref{eq-rem-rnsp}, hence of the claim, is concluded. 
\end{proof}

\subsection{Sufficient conditions for the g-RIP via subsampling}\label{sec-sampl}
We now provide sufficient conditions for the g-RIP to be satisfied by the sampling operator of the model.

\begin{theorem}[\textbf{Sampling $\Rightarrow$ g-RIP}]
\label{thm-samp-grip}
Consider the setting of Section \ref{sec-setting}. Suppose that Assumptions \ref{ass-coherence-bound} and \ref{ass-prob-low-est} are satisfied.
Define
\begin{align*}
    G\coloneqq\sqrt{P_{\Lambda}\Phi F^* F\Phi^*\iota_{\Lambda}}.
\end{align*}
Suppose that $G$ is invertible.

Let $(t_1,\dots,t_m)\in\cD^m$,  $m\in\bN$, be independent and identically distributed samples from $\nu$ (with possible repetitions to be kept). Let $A$ be the sampling operator associated with $(t_1,\dots,t_m)$, namely
\begin{equation*}
    A \coloneqq \lc \frac{1}{\sqrt{m}}F_{t_k}\Phi^*\iota_{\Lambda} \rc_{k=1}^m,
\end{equation*}
and introduce the operator
\begin{equation}
\label{eq:q-definition}
    Q\in\cL(\cH^m), \quad Q(h_1,\dots,h_m) = (f_{\nu}(t_1)^{-1/2}h_1,\dots,f_{\nu}(t_m)^{-1/2}h_m).
\end{equation}

Set $M=|\Lambda|$ and $\lambda\in[M]$. Assume that $\tau\coloneqq B^2\|G^{-1}\|^2\lambda\geq 3$. There exists a universal constant $C>0$ such that, for any $\gamma,\delta\in(0,1)$,  if
\begin{equation}
\label{eq-thm-samp-grip-01}
    m \geq C\delta^{-2}\tau\max\{\log^3{\tau}\log{M},\log(1/\gamma)\},
\end{equation}
then $QA$ satisfies the g-RIP with respect to $(G,\omega,\delta,\lambda)$ with probability exceeding $1-\gamma$.
\end{theorem}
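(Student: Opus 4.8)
The plan is to recognize $QA$ as the sampling operator of a (generalized) bounded orthonormal system whose expected Gram matrix equals $G^{2}$, to precondition by $G^{-1}$ so that this Gram matrix becomes the identity, and then to run the now‑standard chaining machinery for restricted isometry constants of subsampled bounded systems, in its weighted‑sparsity form. First I would set $\psi_i(t)\coloneqq f_\nu(t)^{-1/2}F_t\phi_i\in\cHt$ and $\Psi(t)\colon\ell^2(\Lambda)\to\cHt$, $\Psi(t)x\coloneqq\sum_{i\in\Lambda}x_i\psi_i(t)$, so that $\|QAx\|_{\cHt^m}^{2}=\langle H_m x,x\rangle$ with $H_m\coloneqq\frac1m\sum_{k=1}^m\Psi(t_k)^{*}\Psi(t_k)$. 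Two elementary computations underpin everything: by Assumption~\ref{ass-coherence-bound}, $\|\psi_i(t)\|_{\cHt}\le B\,\omega_i$ pointwise in $t$; and since $\diff\nu=f_\nu\diff\mu$, one gets $\bE_{t\sim\nu}\bigl[\Psi(t)^{*}\Psi(t)\bigr]=P_\Lambda\Phi F^{*}F\Phi^{*}\iota_\Lambda=G^{2}$, hence $\bE[H_m]=G^{2}$. Consequently the $g$-RIP \eqref{eq-grip} for $QA$ with respect to $(G,\omega,\delta,\lambda)$ is \emph{exactly} the assertion that $|\langle(H_m-G^{2})x,x\rangle|\le\delta\,\|Gx\|_2^{2}$ for all $x$ with $\|x\|_{0,\omega}\le\lambda$.

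Since $G$ is invertible, positive and self-adjoint, the substitution $z=Gx$ rewrites this as $|\langle(\widetilde A^{*}\widetilde A-I)z,z\rangle|\le\delta\,\|z\|_2^{2}$ for $z$ ranging over $\Sigma_\lambda\coloneqq\bigcup_{T\subseteq\Lambda,\ \omega(T)\le\lambda}G\bigl(\Span\{e_i:i\in T\}\bigr)$, where $\widetilde A\coloneqq QAG^{-1}$ satisfies $\bE[\widetilde A^{*}\widetilde A]=G^{-1}G^{2}G^{-1}=I$. Hence it suffices to control
\[
\Delta\ \coloneqq\ \sup_{z\in\Sigma_\lambda,\ \|z\|_2\le1}\Bigl|\tfrac1m\sum_{k=1}^m\bigl(\|\widetilde\Psi(t_k)z\|_{\cHt}^{2}-\bE\|\widetilde\Psi(t)z\|_{\cHt}^{2}\bigr)\Bigr|,\qquad\widetilde\Psi(t)z\coloneqq\Psi(t)G^{-1}z .
\]
The decisive a priori bound comes from the coherence estimate and the weighted Cauchy--Schwarz inequality: if $z=Gx\in\Sigma_\lambda$ with $\|z\|_2\le1$, then $x$ is supported on some $T$ with $\omega(T)\le\lambda$ and $\|x\|_2\le\|G^{-1}\|\,\|z\|_2\le\|G^{-1}\|$, so that
\[
\|\widetilde\Psi(t)z\|_{\cHt}=\Bigl\|\sum_{i\in T}x_i\psi_i(t)\Bigr\|_{\cHt}\le B\sum_{i\in T}|x_i|\,\omega_i\le B\sqrt{\omega(T)}\,\|x\|_2\le B\sqrt\lambda\,\|G^{-1}\| ,
\]
and therefore $\sup_t\sup_{z}\|\widetilde\Psi(t)z\|_{\cHt}^{2}\le B^{2}\|G^{-1}\|^{2}\lambda=\tau$, while $\bE\|\widetilde\Psi(t)z\|_{\cHt}^{2}=\|z\|_2^{2}\le1$. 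Note that from here on the space $\cHt$ plays no role — only the scalars $\|\widetilde\Psi(t_k)z\|_{\cHt}^{2}$ enter $\Delta$ — so the vector‑valued nature of the measurements costs nothing beyond notation.

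For $\bE\Delta$ I would run the classical estimate for the restricted isometry constant of a subsampled bounded system, in the weighted‑sparsity version of \cite{RW,AAH} (see also \cite{FR}): Rademacher symmetrization, then Dudley's entropy integral (via the Rudelson--Vershynin two‑step argument, or generic chaining), using metric‑entropy bounds for the index set, which by $|T|\le\omega(T)\le\lambda$ reduce to the now‑standard covering estimates for weighted‑sparse sets transported by the fixed operator $G$. Feeding in the two quantities $\tau$ and $1$ obtained above in place of the usual parameters yields a bound of the form $\bE\Delta\lesssim\sqrt{\tau\log^{3}\tau\,\log M/m}+\tau\log^{3}\tau\,\log M/m$, so that $\bE\Delta\le\delta/2$ as soon as $m\gtrsim\delta^{-2}\tau\log^{3}\tau\,\log M$ (the exponent $3$ is the usual suboptimal one, cf.\ the comment after Theorem~\ref{thm-main-result}). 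To upgrade the mean bound to high probability I would note that $\Delta$ is the supremum of an empirical process whose per‑sample contributions are bounded by $\tau/m$ and whose total variance is $\lesssim\tau/m$; a Bernstein‑type deviation inequality for suprema of empirical processes (equivalently Talagrand's concentration inequality, or a bounded‑differences argument) then gives $\bP\bigl(\Delta\ge\bE\Delta+r\bigr)\le\exp(-c\,m r^{2}/\tau)$ for $0<r<1$, and choosing $r=\delta/2$ this is $\le\gamma$ provided $m\gtrsim\delta^{-2}\tau\log(1/\gamma)$. Combining the two regimes gives $\Delta\le\delta$ with probability at least $1-\gamma$ under \eqref{eq-thm-samp-grip-01}, which is the claimed $g$-RIP.

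The main obstacle is the $\bE\Delta$ step: the real work is to turn the union of $G$‑rotated coordinate subspaces $\Sigma_\lambda$ into workable covering‑number estimates and to push them through the chaining with the correct dependence on the single parameter $\tau$ rather than on $\lambda$, $\|G\|$ and $\|G^{-1}\|$ separately — it is precisely the preconditioning by $G^{-1}$ that makes this bundling possible, and the exact bookkeeping of the logarithmic factors lives here. A secondary point is to verify that the symmetrization and chaining, classically stated for scalar samples, carry over verbatim — which they do, since after the first step only the real numbers $\|\widetilde\Psi(t_k)z\|_{\cHt}^{2}$ occur — and that the constants entering the Bernstein step are the ones recorded above; Assumption~\ref{ass-prob-low-est} is part of the standing setting of Section~\ref{sec-setting}.
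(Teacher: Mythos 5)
Your proposal is correct and follows essentially the same route as the paper's proof: characterize the restricted isometry constant as the supremum of a centered empirical process over the weighted-sparse set rescaled by $G$ (your change of variables $z=Gx$ is the paper's seminorm $\|\cdot\|_{G,\omega,\lambda}$ over $\cB=\{x:\|Gx\|_2\le1,\ \|x\|_{0,\omega}\le\lambda\}$ in disguise), bound its expectation by symmetrization plus Dudley's entropy integral with the covering estimates of \cite{RW} after observing $\sup_t\|\widetilde\Psi(t)z\|_{\cHt}^2\le\tau$ and $\cB\subseteq\|G^{-1}\|T_\omega^{\lambda,M}$, then concentrate via the Bernstein--Talagrand inequality, and finally absorb $f_\nu$ by the normalization $f_\nu(t)^{-1/2}F_t$ (which the paper performs at the end rather than upfront). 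The one piece you defer to the literature — pushing the chaining through with the single parameter $\tau$ — is exactly where the paper invests its effort (the Hölder change of metric with $r=1+(\log\tau)^{-1}$), but the architecture and constants you record match.
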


The proof of this result follows a well-established pathway in the theory of compressed sensing, aimed at obtaining probabilistic bounds for $\delta$ -- see for instance the arguments and techniques used in the proofs of \cite[Theorem 3.6]{AAH}, \cite[Theorem 3.1]{KNW}, \cite[Theorem 3.2]{LA} and \cite[Theorem 5.2]{RW}. We refer the reader to Appendix \ref{appendix:proofs} for a proof of Theorem \ref{thm-samp-grip}.

\subsection{Proof of Theorem \ref{thm-main-result}}
The last ingredient needed to obtain the proof of Theorem~\ref{thm-main-result} is the following estimate on the error arising from the truncation of the infinite dimensional unknown $x^\dagger$ to the finite-dimensional set $\Lambda$.

\begin{proposition}[\textbf{Truncation error}]
\label{prop:truncation-error}
    Consider the setting of Theorem \ref{thm-main-result}. Suppose that $\|P_{\Lambda}^{\perp}x^{\dagger}\|_2 \leq r$ and
    \begin{align}
    \label{eq:prop-trunc-sample-compl}
        m \geq \|G_1^{-1}\|^2 \|G_2^{-1}\|^2 \|G_2\|^2 \log(1/\gamma)
    \end{align}
    for some invertible $M\times M$ matrices $G_1,G_2$ such that one of these conditions is satisfied:
    \begin{enumerate}
        \item $G_1=G_2$,
        \item $\|G_1^{-1}\|\geq c$ for some constant $c$.
    \end{enumerate}
    Let $Q$ be defined as in \eqref{eq:q-definition}. Then, with probability exceeding $1-\gamma$, we have that
    \begin{align*}
        \|G_2^{-1}\|\|QAP_{\Lambda}^{\perp}x^{\dagger}\|_{\cHt^m} \leq C_1 c_{\nu}^{-1/2} \lc \|F\Phi^*\iota_{\Gamma\setminus\Lambda}\|\|G_2^{-1}\|+1 \rc r,
    \end{align*}
    where $C_1>0$ is a constant depending only on $C_F$  (and on $c$ in the second case).
\end{proposition}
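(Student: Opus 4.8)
The plan is to reduce the statement to a scalar Bernstein inequality for i.i.d.\ bounded random variables and then to calibrate the resulting tail bound against the sample-complexity threshold \eqref{eq:prop-trunc-sample-compl}.

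First I would rewrite the quantity to be estimated. Set $v\coloneqq\Phi^*P_{\Lambda}^{\perp}x^{\dagger}\in\cH_1$, so that $\|v\|_{\cH_1}=\|P_{\Lambda}^{\perp}x^{\dagger}\|_2\le r$, and put $Z_k\coloneqq f_{\nu}(t_k)^{-1}\|F_{t_k}v\|_{\cHt}^2$ for $k=1,\dots,m$. By the definitions of $A$ and of $Q$ in \eqref{eq:q-definition},
\[
\|QAP_{\Lambda}^{\perp}x^{\dagger}\|_{\cHt^m}^2=\frac1m\sum_{k=1}^m Z_k ,
\]
the $Z_k$ being i.i.d. The next step is to control the first two moments of $Z_1$. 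Since $\diff\nu=f_{\nu}\diff\mu$, the change of measure cancels the weight $f_{\nu}^{-1}$, and, as $P_{\Lambda}^{\perp}x^{\dagger}$ is supported off $\Lambda$,
\[
\bE[Z_1]=\int_{\cD}\|F_t v\|_{\cHt}^2\,\diff\mu(t)=\|Fv\|_{\cHd}^2\le\|F\Phi^*\iota_{\Gamma\setminus\Lambda}\|^2 r^2 ;
\]
moreover, Assumption~\ref{ass-prob-low-est} and $\|F_t\|\le C_F$ give the uniform bound $0\le Z_1\le R$, $R\coloneqq c_{\nu}^{-1}C_F^2 r^2$, whence $\mathrm{Var}(Z_1)\le\bE[Z_1^2]\le R\,\bE[Z_1]$.

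Then I would apply Bernstein's inequality: with probability at least $1-\gamma$,
\[
\frac1m\sum_{k=1}^m Z_k\le\bE[Z_1]+\sqrt{\frac{2R\,\bE[Z_1]\log(1/\gamma)}{m}}+\frac{R\log(1/\gamma)}{3m}\le\frac32\,\bE[Z_1]+\frac{4R\log(1/\gamma)}{3m},
\]
the last step being $\sqrt{ab}\le\tfrac12(a+b)$ with $a=\bE[Z_1]$ and $b=2R\log(1/\gamma)/m$. Bounding $\log(1/\gamma)/m\le(\|G_1^{-1}\|^2\|G_2^{-1}\|^2\|G_2\|^2)^{-1}$ by \eqref{eq:prop-trunc-sample-compl}, inserting the moment estimates, and multiplying by $\|G_2^{-1}\|^2$, I obtain
\[
\|G_2^{-1}\|^2\,\|QAP_{\Lambda}^{\perp}x^{\dagger}\|_{\cHt^m}^2\le\frac32\,\|F\Phi^*\iota_{\Gamma\setminus\Lambda}\|^2\|G_2^{-1}\|^2 r^2+\frac{4c_{\nu}^{-1}C_F^2 r^2}{3\,\|G_1^{-1}\|^2\|G_2\|^2}.
\]
It remains to absorb the last summand. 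In case $(1)$, $G_1=G_2$, so $\|G_1^{-1}\|^2\|G_2\|^2=\|G_2^{-1}\|^2\|G_2\|^2\ge\|G_2^{-1}G_2\|^2=1$ and that summand is $\le\tfrac43 c_{\nu}^{-1}C_F^2 r^2$; taking square roots via $\sqrt{a+b}\le\sqrt a+\sqrt b$ and using $c_{\nu}\le1$ gives the claim with $C_1=\max\{\sqrt{3/2},\sqrt{4/3}\,C_F\}$. Case $(2)$ follows by the same computation, now using the hypothesis $\|G_1^{-1}\|\ge c$ to reduce the last summand to a term swallowed by the additive constant, at the cost of a constant $C_1$ also depending on $c$.

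The main obstacle is the need for a genuinely sharp concentration inequality at the Bernstein step rather than a cruder tail bound: a deterministic estimate would only replace $\|F\Phi^*\iota_{\Gamma\setminus\Lambda}\|$ by the possibly much larger uniform norm $C_F=\|F\|$, losing exactly the smoothing gain of the forward map that makes this truncation estimate useful. Extracting the aggregate norm $\|F\Phi^*\iota_{\Gamma\setminus\Lambda}\|$ relies on the variance bound $\mathrm{Var}(Z_1)\le R\,\bE[Z_1]$ together with $\bE[Z_1]\le\|F\Phi^*\iota_{\Gamma\setminus\Lambda}\|^2 r^2$, and on the precise matching of the threshold \eqref{eq:prop-trunc-sample-compl} so that the residual Bernstein term is absorbed; the case distinction for $(G_1,G_2)$ is then routine bookkeeping.
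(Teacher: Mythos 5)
Your proposal is correct and follows essentially the same route as the paper: both reduce the claim to scalar Bernstein concentration of $\frac1m\sum_k f_{\nu}(t_k)^{-1}\|F_{t_k}\Phi^*P_\Lambda^\perp x^\dagger\|_{\cHt}^2$ around its mean $\|F\Phi^*P_\Lambda^\perp x^\dagger\|_{\cHd}^2\le \|F\Phi^*\iota_{\Gamma\setminus\Lambda}\|^2r^2$, with the same uniform bound $c_\nu^{-1}C_F^2r^2$ and variance proxy, followed by the same calibration against \eqref{eq:prop-trunc-sample-compl} and the same two-case bookkeeping (including the same level of brevity in case (2)). The only difference is cosmetic: you invoke the one-sided Bernstein deviation bound directly and absorb the cross term via $\sqrt{ab}\le\frac12(a+b)$, whereas the paper states the exponential tail bound and inverts the relation $m=m(\varepsilon)$ explicitly.
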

\begin{proof}
Let $u_R\coloneqq \Phi^* P_{\Lambda}^{\perp}x^{\dagger}$ and $c_F\coloneqq \|F\Phi^*\iota_{\Gamma\setminus\Lambda}\|$. We apply Lemma \ref{lem-thm-samp-grip-bernstein} to the random variables $Y_k(t)\coloneqq \|QF_{t_k}u_R\|_{\cHt}^2-\|Fu_R\|_{L_{\mu}^2(\cD;\cHt)}^2$ in the case where the family of functions $f_x$ consists just of the identity mapping; as a consequence, we obtain as a special case the classical version of Bernstein's inequality. Notice that, in this case,
\begin{align*}
    \|QA P_\Lambda^\perp x^\dagger \|_{\cHt^m}^2 = \frac{1}{m} \sum_{k=1}^m f_{\nu}(t_k)^{-1}\|F_{t_k}u_R\|_{\cHt}^2.
\end{align*}

We claim that $\bE(f_{\nu}(t_k)^{-1}\|F_{t_k}u_R\|_{\cHt}^2-\|Fu_R\|_{L_{\mu}^2(\cD;\cHt)}^2)=0$. Indeed, by definition of $F$, we have that 
\begin{align*}
    \int_{\cD} \|F_t u\|_{\cHt}^2 \diff{\mu}(t) = \|Fu\|_{L_\mu^2(\cD;\cHt)}^2,\quad u\in\cH_1,
\end{align*}
which is equivalent to the claim. Moreover, we have that
\begin{align*}
    |f_{\nu}(t_k)^{-1}\|F_{t_k}u_R\|_{\cHt}^2-\|Fu_R\|_{L_{\mu}^2(\cD;\cHt)}^2| &\leq c_{\nu}^{-1}\|F_{t_k}u_R\|_{\cHt}^2+\|Fu_R\|_{L_{\mu}^2(\cD;\cHt)}^2 \\ &\leq 2 C_F^2 c_{\nu}^{-1}r^2 \eqqcolon K,
\end{align*}
where we have used the fact that $\|F_{t}\|,\|F\|\leq C_F$. Finally, we have that
\begin{align*}
    &\bE| f_{\nu}(t_k)^{-1}\|F_{t_k}u_R\|_{\cHt}^2-\|Fu_R\|_{L_{\mu}^2(\cD;\cHt)}^2 |^2\\ &\qquad =
    \bE| f_{\nu}(t_k)^{-1/2} \|F_{t_k}u_R\|_{\cHt} |^4 - \lc\bE\|Fu_R\|_{L_{\mu}^2(\cD;\cHt)}^2\rc^2 \\ &\qquad \leq
   C_F^2 c_{\nu}^{-1}r^2 \bE| f_{\nu}(t_k)^{-1/2} \|F_{t_k}u_R\|_{\cHt} |^2 \\ &\qquad=C_F^2
    c_{\nu}^{-1} r^2 \|Fu_R\|_{L_{\mu}^2(\cD;\cHt)}^2 \leq
    C_F^2 c_{\nu}^{-1} c_F^2 r^4 \eqqcolon \Sigma^2.
\end{align*}
By the Bernstein inequality (Lemma \ref{lem-thm-samp-grip-bernstein}) applied to $\varepsilon m$ in place of $\varepsilon$, we obtain 
\begin{align*}
    &\bP\lc \|QAu_R\|^2 \geq \|Fu_R\|_{L_{\mu}^2(\cD;\cHt)}^2 + \varepsilon \rc\\  &\qquad =
    \bP\lc \sum_{k=1}^m f_{\nu}(t_k)^{-1}\|F_{t_k}u_R\|_{\cHt}^2 \geq m\|Fu_R\|_{L_{\mu}^2(\cD;\cHt)}^2 + m\varepsilon \rc \\ &\qquad \leq
    \exp\lc - \frac{1}{C} \frac{m^2\varepsilon^2}{m C_F^2 c_{\nu}^{-1} c_F^2 r^4+m C_F^2 c_{\nu}^{-1} r^4 c_F^2+ C_F^2 c_{\nu}^{-1}r^2 m\varepsilon} \rc \\ &\qquad \leq
    \exp\lc -\frac{1}{C C_F^2 c_{\nu}^{-1}} \frac{m\varepsilon^2}{c_F^2 r^4+ r^2 \varepsilon} \rc,
\end{align*}
where $C>0$ is an absolute constant.

Therefore, with probability exceeding $1-\gamma$, the following estimate holds:
\begin{align*}
    \|QAu_R\|_{\cHt^m}^2 \leq \|Fu_R\|_{L_{\mu}^2(\cD;\cHt)}^2 + \varepsilon,
\end{align*}
provided that
\begin{align*}
    m \geq \tilde{C} \frac{c_F^2 r^4 + r^2\varepsilon}{\varepsilon^2} \log(1/\gamma),
\end{align*}
where $\tilde{C}\coloneqq C C_F^2 c_{\nu}^{-1}$.

We now invert the relation between $m=m(\varepsilon)$ and $\varepsilon$, obtaining $\varepsilon=\varepsilon(m)$. Then, the inequality above will be trivially satisfied for $m$ as in \eqref{eq:prop-trunc-sample-compl} and $\varepsilon=\varepsilon(m)$. We have that $\varepsilon=\varepsilon(m)$ is given by
\begin{align*}
    \varepsilon = \frac{\tilde{C}\log(1/\gamma)r^2+\sqrt{\tilde{C}^2\log^2(1/\gamma) r^4+4\tilde{C}\log(1/\gamma)c_F^2 r^4 m}}{2m}.
\end{align*}
Therefore, we obtain that, with probability exceeding $1-\gamma$, 
\begin{align*}
    \|QAu_R\|_{\cHt^m}^2 &\leq \|Fu_R\|_{L_{\mu}^2(\cD;\cHt)}^2 + \varepsilon \\ &\leq
    c_F^2 r^2 + \frac{\tilde{C}\log(1/\gamma)}{m} r^2 + \sqrt{ \frac{\tilde{C}\log(1/\gamma)}{m} } c_F r^2 \\ &\leq
     \lc c_F^2 + \frac{C C_F^2}{\|G_1^{-1}\|^2 \|G_2^{-1}\|^2 \|G_2\|^2} + \frac{\sqrt{C C_F^2} c_F}{\|G_1^{-1}\| \|G_2^{-1}\| \|G_2\|} \rc c_{\nu}^{-1} r^2.
\end{align*}
We now distinguish between the two cases in the statement of the Proposition. If $G_1=G_2\eqqcolon G$, then
\begin{align*}
    \|G^{-1}\|^2 \|QAu_R\|_{\cHt^m}^2 &\leq
    \lc c_F^2 \|G^{-1}\|^2 + \frac{C C_F^2}{\|G^{-1}\|^2\|G\|^2} + \frac{\sqrt{C C_F^2} c_F}{ \|G\|} \rc c_{\nu}^{-1} r^2\\ &\leq
    \lc c_F^2 \|G^{-1}\|^2 + C C_F^2 + c_F\|G^{-1}\|\sqrt{C C_F^2} \rc
    c_{\nu}^{-1} r^2 \\ &\leq
    C_1^{2}\lc c_F^2 \|G^{-1}\|^2 + 1 \rc c_{\nu}^{-1} r^2,
\end{align*}
with $C_1$ depending only on $C_F$.

On the other hand, if $\|G_1^{-1}\|\geq c$, we get that
\begin{align*}
    \|G_2^{-1}\|^2 &\|QAu_R\|_{\cHt^m}^2 \leq
    \lc c_F^2 \|G_2^{-1}\|^2 + \frac{C  C_F^2}{\|G_1^{-1}\|^2\|G_2\|^2} + \frac{\sqrt{C  C_F^2} c_F}{\|G_1^{-1}\| \|G_2\|}\|G_2^{-1}\| \rc c_{\nu}^{-1} r^2\\ &\leq
    \lc c_F^2 \|G_2^{-1}\|^2 + C C_F^2 c^{-2} \|G_2^{-1}\|^2 + c_F\sqrt{C  C_F^2} c^{-1} \|G_2^{-1}\|^2 \rc
    c_{\nu}^{-1} r^2 \\ &\leq
    C_1^{2}\lc c_F^2 \|G_2^{-1}\|^2 + 1 \rc c_{\nu}^{-1} r^2,
\end{align*}
with $C_1$ depending only on $C_F$ and  $c$.
\end{proof}

The proof of Theorem~\ref{thm-main-result} follows by a concatenation of the results already proved in this section.

\begin{proof}[Proof of Theorem~\ref{thm-main-result}]
We have that $m$ satisfies \eqref{eq-thm-samp-grip-01} in Theorem~\ref{thm-samp-grip} for
\begin{align*}
    \lambda = C_0 \|G^{-1}\|^2\|G\|s.
\end{align*}
As a consequence of Theorem~\ref{thm-samp-grip} we have that $QA$, where $Q$ is defined as in \eqref{eq:q-definition}, satisfies the g-RIP with respect to $(G,\omega,\delta,\lambda)$, with $\delta=1/2$. Using Theorem~\ref{thm-grip-rnsp} (with $\delta = 1/2$ and $\rho' = 1/3 $), we have that $QA$ satisfies the RNSP with respect to $(\omega,\rho,\kappa,s)$ with $\rho=1/2$ and $\kappa = \frac{3}{2\sqrt{1-\delta*}}\|G^{-1}\| = \frac{3}{\sqrt 2} \|G^{-1}\|$. We have that $C_0 = \frac{5}{\rho'^2}(1+\delta)/(1-\delta) = 135$.

Notice that $x^\dagger \in \ell^2(\Gamma)$, while the g-RIP property of $QA$ holds on the finite dimensional subspace $\ell^2(\Lambda) \subset \ell^2(\Gamma)$.
We can split the error term as follows:
\begin{align*}
    \|x^{\dagger}-\widehat{x}\|_2 \leq \|P_{\Lambda}x^{\dagger}-\widehat{x}\|_2 + \|P_{\Lambda}^{\perp}x^{\dagger}\|_2 \leq
    \|P_{\Lambda}x^{\dagger}-\widehat{x}\|_2 + r.
\end{align*}
We now estimate the first term. With reference to the $\ell^2$ bounds in Theorem~\ref{thm-rnsp-rec} applied to $P_{\Lambda}x^{\dagger}$ in place of $x^{\dagger}$ we get the following estimate:
\begin{align*}
    \|P_{\Lambda}x^{\dagger}-\widehat{x}\|_{2} &\leq
    \frac{c_1}{\sqrt{s}}\lc \|\widehat{x}\|_{1,\omega} - \|P_{\Lambda} x^{\dagger}\|_{1,\omega} + 2\sigma_s(P_{\Lambda}x^{\dagger})_{1,\omega} \rc +
    c_2\|G^{-1}\|
    \|QA(P_{\Lambda}x^{\dagger}-\widehat{x})\|_{\cH_2^m}
\end{align*}
for absolute constants $c_1,c_2>0$. Notice that $P_{\Lambda}x^{\dagger}$ satisfies the constraint of problem \eqref{eq-min-probl}; indeed, we can exploit Assumption~\ref{ass-prob-low-est} and Proposition \ref{prop:truncation-error} applied in the case $G_1=G_2=G$ to conclude that the following bound holds with probability exceeding $1-\gamma$:
\begin{align*}
   \left(\frac{1}{m}\sum_{k=1}^m \|F_{t_k}\Phi^*\iota_{\Lambda} x-y_k\|_{\cHt}^2\right)^{\frac12} &=\|AP_{\Lambda}x^{\dagger}-y\|_{\cH_2^m} \\ &\leq
    \|Ax^{\dagger}-y\|_{\cH_2^m}+
    \|AP_{\Lambda}^{\perp}x^{\dagger}\|_{\cH_2^m} \\ &\leq
    \beta+\|Q^{-1}\| \|QAP_{\Lambda}^{\perp}x^{\dagger}\|_{\cH_2^m} \\ &\leq
    \beta+C_3\|G^{-1}\|^{-1} r.
\end{align*}
Using the fact that $\widehat{x}$ is a minimizer of \eqref{eq-min-probl}, we get that
\begin{equation}
\label{eq:proof_thm35_split}
    \|P_{\Lambda}x^{\dagger}-\widehat{x}\|_{2} \leq
    \frac{2c_1}{\sqrt{s}}\sigma_s(P_{\Lambda}x^{\dagger})_{1,\omega} +
    c_2\|G^{-1}\|
    \|QA(P_{\Lambda}x^{\dagger}-\widehat{x})\|_{\cH_2^m}.
\end{equation}
We can split the last term on the RHS of \eqref{eq:proof_thm35_split} as follows:
\begin{align*}
    \|QA(P_{\Lambda}x^{\dagger}-\widehat{x})\|_{\cH_2^m} \leq
    \|QAx^{\dagger} - Qy\|_{\cH_2^m} +
    \|QA\widehat{x} - Qy\|_{\cH_2^m}+ 
    \|QAP_{\Lambda}^{\perp}x^{\dagger}\|_{\cH_2^m}.
\end{align*} 
The first term satisfies
\begin{align*}
    \|QAx^{\dagger}-Qy\|_{\cH_2^m} \leq c_{\nu}^{-1/2}\beta
\end{align*}
by the assumption on the noise level and by Assumption~\ref{ass-prob-low-est}. The second term satisfies
\begin{align*}
    \|QA\widehat{x}-Qy\|_{\cH_2^m} \leq c_{\nu}^{-1/2}(\beta+C_3\|G^{-1}\|^{-1}r)
\end{align*}
because $\widehat{x}$ satisfies the constraint of problem \eqref{eq-min-probl}. Finally, the third term satisfies
\begin{align*}
    \|QAP_{\Lambda}^{\perp}x^{\dagger}\|_{\cH_2^m} \leq C_3 \|G^{-1}\|^{-1} r \leq c_{\nu}^{-1/2} C_3 \|G^{-1}\|^{-1} r
\end{align*} 
with probability exceeding $1-\gamma$ by Proposition \ref{prop:truncation-error} applied in the case $G_1=G_2=G$.

These bounds together imply the following bound on the last term in the RHS of \eqref{eq:proof_thm35_split}: 
\begin{align*}
    \|G^{-1}\| \|QA(P_{\Lambda}x^{\dagger}-\widehat{x})\|_{\cH_2^m} \leq
    2c_{\nu}^{-1/2} (\|G^{-1}\|\beta+C_3r).
\end{align*} 
The bound given by Proposition \ref{prop:truncation-error} holds together with $QA$ satisfying the g-RIP with probability exceeding $1-2\gamma$. This concludes the proof in the $\ell^2$ case. The $\ell^1_{\omega}$ case is dealt analogously.
\end{proof}

\subsection{Quantitative bounds on \texorpdfstring{$\|G^{-1}\|$}{|G-1|}}
We prove the results stated in Section~\ref{subsec-exp-est}.

\begin{proof}[Proof of Proposition~\ref{prop-cond-quasi-diag}]
 The assumptions  imply that
\begin{equation}
    \|Fu\|_{\cHd}^2 \asymp \|u\|_{H^{-b}}^2 \asymp \sum_{(j,n)\in\Gamma} 2^{-2bj}|\langle u,\phi_{j,n} \rangle|^2.
\end{equation}
Consider now $x\in\ell^2(\Gamma)$ and set $u\coloneqq \Phi^* x$. Since $(\phi_{j,n})_{(j,n)\in\Gamma}$ is an orthonormal basis by assumption, we have $x_{j,n}=\langle u,\phi_{j,n} \rangle_{L^2}$. Then
\begin{equation}
    \|F\Phi^* x\|_{\cHd}^2 = \|Fu\|_{\cHd}^2 \asymp
    \sum_{(j,n)\in\Gamma} 2^{-2bj}|\langle u,\phi_{j,n} \rangle|^2 =
    \sum_{(j,n)\in\Gamma} 2^{-2bj}|x_{j,n}|^2,
\end{equation} hence $F$ satisfies the quasi-diagonalization property as claimed. 
\end{proof}

\begin{proposition}
\label{prop-quasi-diag-est}
Suppose that an operator $F$ satisfies the quasi-diagonalization property \eqref{eq-quasi-diag-gamma} with respect to $(\Phi,b)$ $b\geq 0$. Let $G\coloneqq \sqrt{P_{\Lambda}\Phi F^* F \Phi^*\iota_{\Lambda}}$ and $Z\coloneqq\diag(z_{j,n})_{(j,n)\in\Lambda}$ be a diagonal matrix, with $z_{j,n}>0$ for every $(j,n)\in\Lambda$. Then the following estimate holds:
\begin{equation}
    \|(GZ)^{-1}\|^2 \asymp
    \max_{(j,n)\in\Lambda} (z_{j,n}^{-2} 2^{2bj}),
\end{equation}
where the implicit constants depend on the quasi-diagonalization constants in \eqref{eq-quasi-diag-gamma}.
In particular, if $z_{j,n}\equiv 1$ and $\Lambda=\Lambda_{j_0}$, then
\begin{equation}
    \|G^{-1}\|^2 \leq C2^{2bj_0}.
\end{equation}
\end{proposition}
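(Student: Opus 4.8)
The plan is to reduce the statement to the identity $\|Gx\|_2^2 = \|F\Phi^*\iota_\Lambda x\|_{\cHd}^2$ and then invoke the quasi-diagonalization hypothesis directly, after which everything comes down to maximizing a ratio of diagonal quadratic forms.

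First I would note that $G^2 = P_\Lambda \Phi F^* F \Phi^*\iota_\Lambda$, so that for every $x \in \ell^2(\Lambda)$,
\[
\|Gx\|_2^2 = \langle G^2 x, x\rangle = \langle F\Phi^*\iota_\Lambda x,\, F\Phi^*\iota_\Lambda x\rangle = \|F\Phi^*\iota_\Lambda x\|_{\cHd}^2.
\]
Since $\iota_\Lambda x$ is supported in $\Lambda$, applying \eqref{eq-quasi-diag-gamma} yields
\[
c\sum_{(j,n)\in\Lambda} 2^{-2bj}|x_{j,n}|^2 \le \|Gx\|_2^2 \le C\sum_{(j,n)\in\Lambda} 2^{-2bj}|x_{j,n}|^2,\qquad x\in\ell^2(\Lambda).
\]
The lower bound, with $c>0$, shows that $G$ is injective on the finite-dimensional space $\ell^2(\Lambda)$, hence invertible; as $Z$ is a positive diagonal matrix, $GZ$ is invertible too, so $\|(GZ)^{-1}\|$ is well-defined.

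Next, writing $(GZ)^{-1} = Z^{-1}G^{-1}$ and substituting $w = G^{-1}x$, I would express
\[
\|(GZ)^{-1}\|^2 = \sup_{w\in\ell^2(\Lambda)\setminus\{0\}} \frac{\|Z^{-1}w\|_2^2}{\|Gw\|_2^2},
\]
and use the two-sided bound above to sandwich this quantity between $C^{-1}$ and $c^{-1}$ times
\[
\sup_{w\neq 0}\ \frac{\sum_{(j,n)\in\Lambda} z_{j,n}^{-2}|w_{j,n}|^2}{\sum_{(j,n)\in\Lambda} 2^{-2bj}|w_{j,n}|^2}.
\]
For this last supremum I would set $a_{j,n}=z_{j,n}^{-2}$, $b_{j,n}=2^{-2bj}>0$, $t_{j,n}=|w_{j,n}|^2\ge 0$, and observe that $\sum a_{j,n}t_{j,n}\le \bigl(\max_{(j,n)\in\Lambda} a_{j,n}/b_{j,n}\bigr)\sum b_{j,n}t_{j,n}$, with equality when $w$ is supported at an index realizing the maximum (which exists since $\Lambda$ is finite). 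Hence the supremum equals $\max_{(j,n)\in\Lambda}(z_{j,n}^{-2}2^{2bj})$, giving $\|(GZ)^{-1}\|^2 \asymp \max_{(j,n)\in\Lambda}(z_{j,n}^{-2}2^{2bj})$ with implicit constants $c^{-1}$ and $C^{-1}$. Finally, taking $z_{j,n}\equiv 1$ and $\Lambda=\Lambda_{j_0}$: since the largest scale index appearing in $\Lambda_{j_0}$ is $j_0$, the maximum equals $2^{2bj_0}$, so $\|G^{-1}\|^2\le C' 2^{2bj_0}$ (in fact $\asymp 2^{2bj_0}$).

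I do not expect a genuine obstacle here. The only points requiring care are the well-definedness of $(GZ)^{-1}$ — which rests on the strict positivity of the lower quasi-diagonalization constant — and the elementary maximization of the ratio of diagonal forms; both are routine.
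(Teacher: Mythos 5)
Your proof is correct and follows essentially the same route as the paper's: both rest on the identity $\|Gx\|_2^2=\|F\Phi^*\iota_\Lambda x\|_{\cHd}^2$ combined with the quasi-diagonalization bounds, and then reduce to extracting $\max_{(j,n)\in\Lambda}(z_{j,n}^{-2}2^{2bj})$. The only cosmetic difference is that you package both bounds into a single Rayleigh-quotient computation $\sup_{w\neq 0}\|Z^{-1}w\|_2^2/\|Gw\|_2^2$, whereas the paper obtains the upper bound from $\min_{(j,n)\in\Lambda}(z_{j,n}^{2}2^{-2bj})$ and the lower bound by testing $(GZ)^{-1}$ on the normalized vector $GZe_{j',n'}$.
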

\begin{proof}
Set $W\coloneqq \diag(2^{bj})_{(j,n)\in\Lambda}$. By \eqref{eq-quasi-diag-gamma} we have that
\begin{equation}
    \|GZx\|_2^2 =\|F\Phi^*\iota_\Lambda Z x\|_{\cHd}^2 \geq c\|W^{-1}Zx\|_2^2 \geq c\min_{(j,n)\in\Lambda} (z_{j,n}^{2} 2^{-2bj})\|x\|_2^2.
\end{equation}
Combining these two estimates yields
\begin{align*}
    \|x\|_2^2 \leq \frac{1}{c} \max_{(j,n)\in \Lambda} (z_{j,n}^{-2} 2^{2bj}) \|GZx\|_2^2.
\end{align*}
This proves the upper bound on $\|(GZ)^{-1}\|^2$.

On the other hand, let $(j',n')\in\Lambda$ be such that
\begin{align*}
    z_{j',n'}^2 2^{-2bj'} = \min_{(j,n)\in\Lambda} (z^2_{j,n}2^{-2bj}).
\end{align*}
Then we have that
\begin{align*}
    \|GZe_{j',n'}\|_2^2 \leq C\|W^{-1}Ze_{j',n'}\|_2^2 = C\min_{(j,n)\in\Lambda} (z_{j,n}^2 2^{-2bj}).
\end{align*}
Let $f_{j',n'}\coloneqq \frac{GZe_{j',n'}}{\sqrt{C}\min_{(j,n)\in\Lambda} (z_{j,n}2^{-bj})}$. The previous bound implies that $\|f_{j',n'}\|_2\leq 1$. Therefore, we have that
\begin{align*}
    \|(GZ)^{-1}\|^2 =
    \sup_{\|y\|_2\leq 1} \|(GZ)^{-1}y\|^2 \geq
    \|(GZ)^{-1}f_{j',n'}\|_2^2 =
    \frac{1}{C\min_{(j,n)\in\Lambda}(z_{j,n}^2 2^{-2bj})}.
\end{align*}
This proves the lower bound on $\|(GZ)^{-1}\|^2$.
\end{proof}

\begin{proof}[Proof of Theorem~\ref{thm:interpolation}]
Let $D\coloneqq\diag(d_{j,n})_{(j,n)\in\Lambda_{j_0}}$ and let $\tilde{D}$ be the operator acting on $\Span(\phi_{j,n})_{(j,n)\in\Gamma}$ by $\tilde{D}\phi_{j,n}\coloneqq d_{j,n}\phi_{j,n}$ for $(j,n)\in\Lambda_{j_0}$,  $\tilde{D}\phi_{j,n}=\phi_{j,n}$ otherwise, so that $\Phi^* D = \tilde{D} \Phi^*$. Moreover, we extend $W$, and therefore $W^{\zeta}$, to an operator on $\ell^2(\Gamma)$, setting $W=\diag(w_{j,n})_{j,n}$, where
\begin{align*}
    w_{j,n} =
    \begin{cases}
        2^{bj} & \text{if $(j,n)\in\Lambda_{j_0}$},\\
        1 & \text{otherwise}.
    \end{cases}
\end{align*}
Similarly, set $\tilde W^\zeta = \Phi^* W^\zeta \Phi$.

Consider the measurement operators given by $(F_t\tilde{D})$. The companion forward map is given by $F\tilde{D}$, while the truncated matrix (i.e., playing the role of $G$) is given by $\tilde{G}\coloneqq \sqrt{D^*G^*GD}$ and the corresponding sampling matrix is $AD$. Furthermore, $\|(\tilde G) ^{-1}\| = \|(GD)^{-1}\|$.

The following bounds hold by Proposition~\ref{prop-quasi-diag-est} and by the quasi-diagonalization property:
\begin{equation}\label{eq:upandlow}
    \|(GD)^{-1}\|^2 \asymp \max_{(j,n)\in\Lambda_{j_0}} (d_{j,n}^{-2} 2^{2bj}),\quad
    \|(GW^{\zeta})^{-1}\|^2\asymp 2^{2(1-\zeta)bj_0},\quad
    \|GW^{\zeta}\| \lesssim 1,
\end{equation}
where the implicit constants depend on the quasi-diagonalization constants. Let $\lambda$ be defined by
\begin{align*}
    \lambda \coloneqq \tilde{C_0} \|GW^{\zeta}\|^2 \|(GW^{\zeta})^{-1}\|^2 s,
\end{align*}
where $\tilde{C_0}=135$ is the constant appearing in the expression for $\lambda$ in Theorem~\ref{thm-grip-rnsp} with $\delta=1/2$ and $\rho'=1/3$.

Using these bounds, we deduce that, if $C_0$ is large enough, the sample complexity $m$ in \eqref{eq:thm-interp-samplecomplexity} satisfies \eqref{eq-thm-samp-grip-01}, namely,
\begin{equation}
\label{eq:thm-int-sampcomp1}
    m \geq \frac{C}{4}\tau\max\{\log^3{\tau}\log{M},\log(1/\gamma)\}
\end{equation}
with $\tau= B^2\|(GD)^{-1}\|^2\lambda \geq 3$. By Theorem~\ref{thm-samp-grip}, we conclude that $QAD$ satisfies the g-RIP with respect to $(GD,\omega,1/2,\lambda)$ with overwhelming probability. The diagonal invariance of the g-RIP proved in Remark~\ref{rem-diag-inv} shows that the latter result implies that $QA$ satisfies the g-RIP with respect to $(G,\omega,1/2,\lambda)$.

By virtue of the diagonal invariance of the g-RIP illustrated in Remark \ref{rem-diag-inv}, $QAW^{\zeta}$ equivalently satisfies the g-RIP with respect to $(GW^{\zeta},\omega,1/2,\lambda)$. Theorem~\ref{thm-grip-rnsp} now implies that $QAW^{\zeta}$ satisfies the RNSP with respect to $(\omega,\rho,\kappa,s)$ with $\rho=1/2$ and $\kappa=\frac{3}{\sqrt{2}}\|(GW^{\zeta})^{-1}\|\lesssim 2^{(1-\zeta)bj_0}$.

The following identity holds:
\begin{align*}
    y = AW^{\zeta}z^{\dagger}+\varepsilon,
\end{align*}
where $z^{\dagger}\coloneqq W^{-\zeta}x^{\dagger}$. Notice that $\|P_{\Lambda_{j_0}}^{\perp}z^{\dagger}\|_2 = \|W^{-\zeta}P_{\Lambda_{j_0}}^{\perp}x^{\dagger}\|_2 \leq 2^{-\zeta bj_0}r\eqqcolon r'$. Let $\widehat{z}$ be a solution of the problem
\begin{equation}
    \min_{z\in\ell^2(\Lambda_{j_0})} \|z\|_{1,\omega}\colon\quad \|AW^{\zeta}z-y\|_{\cH_2^m}^2 \leq \lc \beta+ C_3 2^{(\zeta-1)bj_0} r'\rc^2 = \lc \beta+ C_3 2^{-bj_0} r\rc^2,
\end{equation}
where $C_3$ is a constant to be assigned later. We also introduce the notation $\widehat{x}\coloneqq W^{\zeta}\widehat{z}$. The change of variable $x=W^{\zeta}z$ shows that this problem is equivalent to \eqref{eq-dir-min-probl}.

We now proceed as in the proof of Theorem~\ref{thm-main-result}. Since $$\|W^{-\zeta}x^{\dagger}-W^{-\zeta}\widehat{x}\|_2=\|z^{\dagger}-\widehat{z}\|_2\leq \|P_{\Lambda_{j_0}}z^{\dagger}-\widehat{z}\|_2 + r',$$ it is enough to obtain a bound for $\|P_{\Lambda_{j_0}}z^{\dagger}-\widehat{z}\|_2$.

The companion forward map associated with the measurement operators $(F_t \tilde W^{\zeta})$ is given by $F \tilde W^{\zeta}$. Using the quasi-diagonalization property \eqref{eq-quasi-diag-gamma}, we deduce that $\|F\tilde W^{\zeta}\|\leq C$, where $C$ is the constant in the quasi-diagonalization upper bound; therefore, an upper bound $C_{F \tilde W^{\zeta}}$ associated with this forward map is given by $C$. Using Proposition~\ref{prop:truncation-error} applied in the case $G_1=GD$ and $G_2=GW^{\zeta}$ (indeed notice that, by \eqref{eq:upandlow}, $\|(GD)^{-1}\|$ is bounded from below by a constant depending on the quasi-diagonalization bounds, since $d_{j,n}\leq 2^{bj}$), we get that, with probability exceeding $1-\gamma$, the following bound holds:
\begin{align*}
    \|QAW^{\zeta}P^\perp_{\Lambda_{j_0}}z^{\dagger}\|_{\cH_2^m} &\leq
    C c_{\nu}^{-1/2} \|(GW^{\zeta})^{-1}\|^{-1} \lc \|F\Phi^*W^{\zeta}\iota_{\Gamma\setminus\Lambda_{j_0}}\| \|(GW^{\zeta})^{-1}\|+1 \rc r' \\ &=
    C c_{\nu}^{-1/2} \lc \|F\Phi^*W^{\zeta}\iota_{\Gamma\setminus\Lambda_{j_0}}\|+\|(GW^{\zeta})^{-1}\|^{-1} \rc r' \\ &\leq
    C c_{\nu}^{-1/2} 2^{(\zeta-1)bj_0} r' = C c_{\nu}^{-1/2} 2^{-bj_0}r,
\end{align*}
where $C>0$ depends only on the quasi-diagonalization constants; we have also used \eqref{eq:upandlow} to bound $\|(GW^{\zeta})^{-1}\|^{-1}\lesssim 2^{(\zeta-1)bj_0} $ and the quasi-diagonalization property to bound $\|F\Phi^*W^{\zeta}\iota_{\Gamma\setminus\Lambda_{j_0}}\|\lesssim 2^{(\zeta-1)bj_0} $. We set $C_3\coloneqq C c_{\nu}^{-1/2}$. Arguing as in the proof of Theorem~\ref{thm-main-result}, we can infer that $z^{\dagger}$ satisfies the constraints of the minimization problem; using the distance bounds from Theorem \ref{thm-rnsp-rec}, we conclude.
\end{proof}

\begin{proof}[Proof of Corollary \ref{cor:main-result-D}]
It is enough to apply Theorem \ref{thm:interpolation} in the case $\zeta=0$.
\end{proof}

\begin{proof}[Proof of Corollary \ref{cor:dir-recovery}]
It suffices to apply Theorem \ref{thm:interpolation} in the case $\zeta=1$ and to notice that, by the quasi-diagonalization property \eqref{eq-quasi-diag-gamma},
\begin{equation}
    \|F\Phi^* x^{\dagger}-F\Phi^*\widehat{x}\|_{L_{\mu}^2(\cD;\cHt)} \asymp
    \|W^{-1}x^{\dagger}-W^{-1}\widehat{x}\|_2. \qedhere    
\end{equation}
\end{proof}

\begin{proof}[Proof of Theorem \ref{thm:exp-est}]
We first introduce the convenient notation \[ \tilde{m}\coloneqq m/(\tilde{C_0} B\log^3{\tau}\log{M})\gtrsim m/(\tilde{C_0} B cj_0\log^3{\tau}),\] where $\tilde{C_0}=C_0\log(1/\gamma)$ and $C_0,B$ are the constants appearing in Theorem \ref{cor:main-result-D} and $\tau\coloneqq 2^{bj_0}s$ for some $s\geq 3$. Notice that, by the choice of $j_0$, we have $j_0\asymp \log(1/\beta)$ and 
\begin{align*}
    \log{\tau}\leq2bj_0+\log{s}\leq bj_0+\log{M}\asymp j_0 \asymp \log(1/\beta).
\end{align*}
We conclude that
\begin{align}
\label{eq:rel-m-mtilde}
    \tilde{m} \gtrsim m/\log^4(1/\beta).
\end{align}
Suppose that $\beta$ is sufficiently small so that $\log^3{\tau}\log{M}\geq \log(1/\gamma)$ -- recall that $j_0=j_0(\beta)$.
By Theorem \ref{cor:main-result-D}, if
\begin{align*}
    \tilde{m} \geq 2^{2(b-d)j_0} 2^{2bj_0} s \geq \max_{(j,n)\in\Lambda_{j_0}}(d_{j,n}^{-2} 2^{2bj}) 2^{2bj_0} s
\end{align*}
then, with probability exceeding $1-\gamma$, the following error estimate holds:
\begin{align*}
    \|x^{\dagger}-\widehat{x}\|_{2} \leq C_1\lc \frac{\sigma_s(x^{\dagger})_1}{\sqrt{s}}+2^{bj_0}\beta + r \rc.
\end{align*}
Suppose that $2^{-bj_0}\asymp \beta^{\zeta}$ for some $\zeta>0$. Therefore 
$\tilde{m} = \beta^{-2\frac{(2b-d)}{b}\zeta}s$
and
\begin{align*}
        \|x^{\dagger}-\widehat{x}\|_{2} \leq C_1\lc s^{-p}+\beta^{1-\zeta}+\beta^{\frac{a}{b}\zeta} \rc.
\end{align*}
Imposing $1-\zeta=a\zeta/b$, we get that $\zeta=b/(a+b)$, which  yields $2^{-bj_0}\asymp\beta^{\frac{b}{a+b}}$.

Combining the expressions for $s$ and $\zeta$ with the estimates, we get 
\begin{align*}
     \|x^{\dagger}-\widehat{x}\|_{2} \leq C_1\lc \frac{\beta^{-2\frac{(2b-d)}{a+b} p}}{\tilde{m}^p}+\beta^{\frac{a}{a+b}} \rc.
\end{align*}
Using \eqref{eq:rel-m-mtilde}, the first estimate in the claim follows.

Moreover, if $\tilde{m}=\beta^{-2\frac{(2b-d)}{a+b}-\frac{a}{p(a+b)}}$, then $\tilde{m}^p=\beta^{-2\frac{(2b-d)}{a+b}p-\frac{a}{a+b}}$ and therefore
\begin{align*}
    \frac{\beta^{-2\frac{(2b-d)}{a+b} p}}{\tilde{m}^p} = \beta^{\frac{a}{a+b}}.
\end{align*}
The second estimate follows.

The last estimate is obtained by inverting the relation $\tilde{m}=\tilde{m}(\beta)$, using again \eqref{eq:rel-m-mtilde} and observing that $\log{m}\asymp\log{1/\beta}$.
\end{proof}

\subsection{The sparse Radon transform}
\label{sec:proof-radon}
We recall some notation from Section~\ref{sec-radon}.

The space of signals for both problems is given by $\cH_1 = \overline{\Span(\phi_{j,n})_{(j,n)\in\Gamma}}$, while $\cHt = L^2(\bR)$ is the codomain for the Radon transform at a fixed angle $\theta\in[0,2\pi)$ and $\cHt = L^2(-\pi/2,\pi/2)$ is the codomain for the fan beam transform from the angle $\theta$. Let $K$ be the compact set defined by
\begin{align*}
    K\coloneqq \overline{ \bigcup_{(j,n)\in\Gamma} \supp{\phi_{j,n}} },
\end{align*}
so that $\cH_1\subset L^2(K)$.

A suitable measurement space for this model is $(\cD,\mu) = ([0,2\pi),\diff{\theta})$, where $\diff{\theta}$ is the uniform probability measure on $[0,2\pi)$. For every $\theta\in\bS^1$, $F_\theta\colon \cH_1\subset L^2(K)\rightarrow L^2(\bR)$ is given by either the Radon transform at a fixed fixed angle $F_\theta = \cR_\theta$ or the fan beam transform from a fixed angle $F_{\theta}=\cD_{\theta}$. The forward map $F\colon \cH_1\subset L^2(K)\rightarrow L^2([0,2\pi)\times\bR)$ is represented by either the Radon transform $F = \cR$ or the fan beam transform $F=\cD$. Notice that the codomain of the Radon transform $L^2([0,2\pi)\times\bR)$ fits into our framework via the canonical identification $L^2([0,2\pi)\times\bR)\cong L^2\lc[0,2\pi),L^2(\bR)\rc$, and the same applies to the fan beam transform.

\begin{lemma}
\label{thm-radon-quasidiag}
The following inequalities hold for some constants $c_1,C_1>0$ depending on $K$:
\begin{equation}
    c_1\|u\|_{H^{-1/2}} \leq \|\cR u\|_{L^2} \leq C_1\|u\|_{H^{-1/2}},\quad u\in L^2(K).
\end{equation}
\end{lemma}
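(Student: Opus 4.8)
The plan is to prove the two-sided bound $\|\cR u\|_{L^2} \asymp \|u\|_{H^{-1/2}}$ via the Fourier Slice Theorem, which identifies the Radon transform with the Fourier transform along slices. First I would recall that the Fourier Slice Theorem states that, for $u \in L^2(K)$ and $\theta \in [0,2\pi)$,
\[
\widehat{\cR_\theta u}(\sigma) = \widehat{u}(\sigma e_\theta), \qquad \sigma \in \bR,
\]
where on the left we take the one-dimensional Fourier transform in the variable $s$ and on the right the two-dimensional Fourier transform of $u$ (this is where compact support of $u$ and the well-definedness from Lemma \ref{thm-radon-continuity} are used to make sense of all the quantities, since $\widehat u$ is then a smooth function and the slices $\sigma \mapsto \widehat u(\sigma e_\theta)$ make pointwise sense). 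By the Plancherel theorem applied slicewise, one gets
\[
\|\cR u\|_{L^2([0,2\pi)\times \bR)}^2 = \int_0^{2\pi} \int_{\bR} |\widehat{u}(\sigma e_\theta)|^2 \, \diff\sigma \, \diff\theta.
\]

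Next I would convert the right-hand side to a polar-coordinate integral over $\bR^2$. Writing $\xi = \sigma e_\theta$ with $\sigma \in \bR$, $\theta \in [0,2\pi)$, one must be careful that this parametrization is a double cover of $\bR^2$ (each $\xi$ is hit by $(\sigma,\theta)$ and $(-\sigma,\theta+\pi)$), and the Lebesgue measure is $\diff\xi = |\sigma|\,\diff\sigma\,\diff\theta$ over a half-range of $\theta$; combining these, $\int_0^{2\pi}\int_{\bR} g(\sigma e_\theta)\,\diff\sigma\,\diff\theta = 2\int_{\bR^2} g(\xi)\,|\xi|^{-1}\,\diff\xi$ for suitable $g\ge 0$. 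Applying this with $g = |\widehat u|^2$ gives
\[
\|\cR u\|_{L^2}^2 = 2\int_{\bR^2} \frac{|\widehat u(\xi)|^2}{|\xi|}\,\diff\xi.
\]
It then remains to compare $\int_{\bR^2} |\xi|^{-1}|\widehat u(\xi)|^2\,\diff\xi$ with $\|u\|_{H^{-1/2}}^2 = \int_{\bR^2}(1+|\xi|^2)^{-1/2}|\widehat u(\xi)|^2\,\diff\xi$. The integrands differ only by the ratio $(1+|\xi|^2)^{-1/2}/|\xi|^{-1} = |\xi|(1+|\xi|^2)^{-1/2}$, which is bounded above by $1$ everywhere; the lower bound $|\xi|(1+|\xi|^2)^{-1/2} \ge c > 0$ fails near $\xi = 0$, so the two norms are \emph{not} comparable on all of $L^2(\bR^2)$. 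This is the main obstacle, and it is resolved by using the compact support: since $\supp u \subseteq K$, the Paley--Wiener theorem (or a direct estimate) gives that $\widehat u$ is bounded, indeed $\|\widehat u\|_\infty \le |K|^{1/2}\|u\|_{L^2}$, so the low-frequency part $\int_{|\xi|\le 1} |\xi|^{-1}|\widehat u(\xi)|^2\,\diff\xi \le \|\widehat u\|_\infty^2 \int_{|\xi|\le 1}|\xi|^{-1}\diff\xi = 2\pi\|\widehat u\|_\infty^2 \lesssim_K \|u\|_{L^2}^2$ is finite and controlled, while simultaneously $\|u\|_{H^{-1/2}}^2 \gtrsim \int_{|\xi|\le 1}(1+|\xi|^2)^{-1/2}|\widehat u(\xi)|^2\,\diff\xi$ and $\|u\|_{H^{-1/2}}^2 \le \|u\|_{L^2}^2$, so the low-frequency discrepancy is absorbed into a constant depending only on $|K|$.

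Concretely, I would split both integrals at $|\xi| = 1$. On $|\xi| \ge 1$ the elementary inequality $\tfrac{1}{\sqrt 2}(1+|\xi|^2)^{-1/2} \le |\xi|^{-1} \le \sqrt 2\,(1+|\xi|^2)^{-1/2}$ holds (since $1 \le 1+|\xi|^{-2} \le 2$ there), giving $\int_{|\xi|\ge1}|\xi|^{-1}|\widehat u|^2 \asymp \int_{|\xi|\ge1}(1+|\xi|^2)^{-1/2}|\widehat u|^2$. For the upper bound of $\|\cR u\|_{L^2}$: the high-frequency piece is $\lesssim \|u\|_{H^{-1/2}}^2$ and the low-frequency piece is $\lesssim_K \|u\|_{L^2}^2 \asymp_K \|u\|_{H^{-1/2}}^2$ — here one uses that on $L^2(K)$, by the same Paley--Wiener bound $\|\widehat u\|_\infty \lesssim_K \|u\|_{L^2}$ together with $|\xi|\le 1 \Rightarrow (1+|\xi|^2)^{-1/2} \ge 1/\sqrt2$, we have $\|u\|_{L^2}^2 \lesssim_K \|u\|_{H^{-1/2}}^2$ for $u$ supported in $K$ (this reverse Sobolev inequality is precisely where compactness of $K$ is essential and would be false otherwise). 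For the lower bound of $\|\cR u\|_{L^2}$: simply discard the low-frequency contribution, $\|\cR u\|_{L^2}^2 \ge 2\int_{|\xi|\ge 1}|\xi|^{-1}|\widehat u|^2 \gtrsim \int_{|\xi|\ge1}(1+|\xi|^2)^{-1/2}|\widehat u|^2$, and then recover the missing low-frequency part of $\|u\|_{H^{-1/2}}^2$ at the cost of a constant, again via $\int_{|\xi|\le1}(1+|\xi|^2)^{-1/2}|\widehat u|^2 \le \|\widehat u\|_\infty^2 \cdot \pi \lesssim_K \|u\|_{L^2}^2 \lesssim_K \|u\|_{H^{-1/2}}^2$, and absorbing. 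Collecting the constants, all of which depend only on $|K|$ (through Lemma \ref{thm-radon-continuity} and the Paley--Wiener estimate), yields the claimed equivalence, completing the proof.
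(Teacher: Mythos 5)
Your overall strategy (Fourier slice theorem, slicewise Plancherel, polar coordinates, splitting at $|\xi|=1$) is exactly the route of Natterer's Theorem 5.1, which is all the paper's proof consists of citing. However, there is a genuine gap in how you handle the low frequencies: the ``reverse Sobolev inequality'' $\|u\|_{L^2}^2\lesssim_K\|u\|_{H^{-1/2}}^2$ for $u\in L^2(K)$, which you invoke both in the low-frequency piece of the upper bound and in the absorption step of the lower bound, is \emph{false}, compact support notwithstanding. Take $u_n(x)=e^{inx_1}\chi(x)$ for a fixed nonzero bump $\chi$ supported in $K$: then $\|u_n\|_{L^2}=\|\chi\|_{L^2}$ is constant, while $\widehat{u_n}=\widehat{\chi}(\cdot-ne_1)$ concentrates at frequency $n$, so $\|u_n\|_{H^{-1/2}}^2\asymp n^{-1}\to 0$. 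Compact support gives you $\|\widehat u\|_\infty\lesssim_K\|u\|_{L^2}$, but that is an upper bound on $\widehat u$ and cannot produce a lower bound on the $H^{-1/2}$ norm. (Your absorption step for the lower bound would anyway require the resulting constant to be strictly less than $1$, which you have no control over.)

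Both issues are fixable, and the fixes are what Natterer actually does. For the lower bound no splitting is needed at all: since $|\xi|\le(1+|\xi|^2)^{1/2}$ for every $\xi$, the weight comparison $|\xi|^{-1}\ge(1+|\xi|^2)^{-1/2}$ holds pointwise, so $\|\cR u\|_{L^2}^2=2\int_{\bR^2}|\xi|^{-1}|\widehat u(\xi)|^2\,\diff\xi\ge 2\|u\|_{H^{-1/2}}^2$ directly, with no use of the support. For the upper bound, the correct way compactness of $K$ enters is through the duality estimate
\begin{equation}
    \sup_{|\xi|\le 1}|\widehat u(\xi)|=\sup_{|\xi|\le 1}|\langle u,\psi\, e^{-i\xi\cdot}\rangle|\le \|u\|_{H^{-1/2}}\sup_{|\xi|\le 1}\|\psi\, e^{-i\xi\cdot}\|_{H^{1/2}}\le C_K\|u\|_{H^{-1/2}},
\end{equation}
where $\psi\in C_c^\infty(\bR^2)$ equals $1$ on $K$; then $\int_{|\xi|\le 1}|\xi|^{-1}|\widehat u|^2\,\diff\xi\le 2\pi C_K^2\|u\|_{H^{-1/2}}^2$ since $|\xi|^{-1}$ is integrable near the origin in $\bR^2$. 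With these two corrections your argument closes and coincides with the cited proof.
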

\begin{proof}
    The proof is analogous to \cite[Theorem 5.1]{natterer}, where the inqualities are proved for $u\in L^2(\cB_1)$.
\end{proof}

\subsubsection*{Quasi-diagonalization}  Theorem~\ref{thm-radon-quasidiag} implies that the Radon transform satisfies condition \eqref{eq-suff-quasi-diag2} with $b=1/2$. On the other hand, if the wavelets are sufficiently regular as detailed in Remark~\ref{rem-regwav-quasidiag}, the Littlewood-Paley property \eqref{eq-suff-quasi-diag1} holds with $b=1/2$. Then, Proposition~\ref{prop-cond-quasi-diag} yields the following quasi-diagonalization property:
\begin{equation}
\label{eq:radon-quasi-diag}
    c_1 \sum_{(j,n)\in\Gamma} 2^{-j} |x_{j,n}|^2 \leq \|\cR\Phi^* x\|_{L^2}^2 \leq C_1 \sum_{(j,n)\in\Gamma} 2^{-j} |x_{j,n}|^2,\quad
    x\in\ell^2(\Gamma).
\end{equation}

We now prove a quasi-diagonalization property for the fan beam transform using the following identity (see, for instance, \cite[Section III.3]{natterer} or \cite[Section 16.3]{scherzer_book}):
\begin{equation}
\label{eq:radon-divergent}
    \cD{u}(\theta,\alpha) = \cR{u}(\theta+\alpha-\pi/2,\rho\sin{\alpha}).
\end{equation}
We have that
\begin{align*}
    \|\cD u\|_{L^2}^2 &= \int_{\bS^1} \int_{-\pi/2}^{\pi/2} |\cR{u}(\theta+\alpha-\pi/2,\rho\sin{\alpha})|^2 \diff{\alpha}\diff{\theta} \\ &=
    \int_{\bS^1} \int_{-\pi/2}^{\pi/2} |\cR{u}(\theta,\rho\sin{\alpha})|^2 \diff{\alpha}\diff{\theta} \\ &=
    \int_{\bS^1} \int_{-r}^r |\cR(\theta,s)|^2 \frac{1}{\sqrt{\rho^2-s^2}} \diff{s}\diff{\theta},
\end{align*}
which implies that
\begin{equation}
\label{eq:radon-quasi-diag-bis}
    \rho^{-1/2} \|\cR{u}\|_{L^2} \leq \|\cD u\|_{L^2} \leq (\rho^2-d^2)^{-1/4} \|\cR{u}\|_{L^2}.
\end{equation}
This, together with Lemma \ref{thm-radon-quasidiag} and \eqref{eq:radon-quasi-diag}, implies that $\cD$ satisfies the quasi-diagonalization property with $b=1/2$, with constants depending on the chosen wavelet basis and on $d$ and $\rho$.

\subsubsection*{Coherence bounds}
We notice that, by construction, for every line $\ell\subset\bR^2$, $\supp{\phi_{j,n}}\cap\ell$ is contained in a segment of length $C2^{-j}$, where $C$ depends only on the chosen wavelet basis. We then have that
\begin{align*}
    \|\cR_{\theta}\phi_{j,n}\|_{L^2(\bR)}^2 &= \int_{\bR} \bigg| \int_{\theta^{\perp}} \phi_{j,n}(y+s e_\theta) \diff{y} \bigg|^2 \diff{s} \\ &\leq
    C2^{-j} \int_{\bR} \int_{\theta^{\perp}} |\phi_{j,n}(y+s e_\theta)|^2 \diff{y} \diff{s} \\ &=
    C2^{-j} \|\phi_{j,n}\|_{L^2(\bR^2)}^2 = C2^{-j}.
\end{align*}
In the same way, we have that
\begin{align*}
    \|\cD_{\theta}\phi_{j,n}\|_{L^2}^2 &=
    \int_{-\pi/2}^{\pi/2} \bigg| \int_{\bR} \phi_{j,n}(\rho e_{\theta}+te_{\theta+\alpha})\diff{t} \bigg|^2 \diff{\alpha} \\ &\leq
    C2^{-j} \int_{-\pi/2}^{\pi/2} \int_{\bR} |\phi_{j,n}(\rho e_{\theta}+te_{\theta+\alpha})|^2\, \diff{t} \diff{\alpha} \\ &=
    C2^{-j} \int_{-\pi/2}^{\pi/2} \int_{\bR} |\phi_{j,n}(\rho e_{\theta}+te_{\theta+\alpha})|^2\, t\, \frac{1}{t}\, \diff{t} \diff{\alpha} \\ &\leq
    \frac{C2^{-j}}{\rho-d} \int_{-\pi/2}^{\pi/2} \int_{\bR} |\phi_{j,n}(\rho e_{\theta}+te_{\theta+\alpha})|^2\, t\, \diff{t} \diff{\alpha} =
    \frac{C2^{-j}}{\rho-d}.
\end{align*}
Assumption \ref{ass-coherence-bound-D} is then satisfied in both cases by $d_{j,n}=2^{j/2}$ and a constant $B$, which, in the case of the fan beam transform, depends on $d$, $\rho$ and the chosen wavelet basis:
\begin{equation}
\label{eq:qd-radon-div}
 \|F_{\theta}\phi_{j,n}\|_{L^2} \leq \frac{B}{2^{j/2}},\quad
    \theta\in [0,2\pi),\ (j,n)\in\Gamma.
\end{equation}
In both cases, the quasi-diagonalization property, together with Proposition~\ref{prop-quasi-diag-est}, implies the following estimates:
\begin{equation}
    \|(GD)^{-1}\|^2 \leq C\max_{(j,n)\in\Lambda_{j_0}}(2^j2^{-j}) = C,
\end{equation}
and
\begin{equation}
    \|G^{-1}\|^2 \leq C2^{j_0}.
\end{equation}
Moreover, by \eqref{eq:radon-estimate-number-wavelets}, we have that $\log{M}\asymp 2j_0$.

\begin{proof}[Proof of Theorem \ref{thm:radon-main-thm}]
    It suffices to apply Corollary \ref{cor:main-result-D} to the Radon setting, using the quasi-diagonalization properties \eqref{eq:radon-quasi-diag} and \eqref{eq:radon-quasi-diag-bis} and the coherence bounds \eqref{eq:qd-radon-div}.
\end{proof}

\begin{proof}[Proof of Theorem \ref{thm:radon-dir-prob}]
    It suffices to apply Corollary \ref{cor:dir-recovery} to the Radon setting, using the quasi-diagonalization properties \eqref{eq:radon-quasi-diag} and \eqref{eq:radon-quasi-diag-bis} and the coherence bounds \eqref{eq:qd-radon-div}.
\end{proof}

\begin{proof}[Proof of Theorem \ref{thm:radon-exp-est}]
    It suffices to apply Theorem \ref{thm:exp-est} to the Radon setting, using the quasi-diagonalization properties \eqref{eq:radon-quasi-diag} and \eqref{eq:radon-quasi-diag-bis} and the coherence bounds \eqref{eq:qd-radon-div}.
\end{proof}

\begin{proof}[Proof of \eqref{eq:ex-cartoon-like-est}]
    We mainly follow the arguments used in \cite[Section 9.3.1]{Ma}. From the construction of the basis of compactly supported wavelets, we have that $\|\phi_{j,n}\|_{L^2}=1$ and $|\supp(\phi_{j,n})|=C2^{-2j}$. Therefore, we get that
    \begin{align*}
        |\langle u^{\dagger},\phi_{j,n} \rangle| \leq \|u^{\dagger}\|_{L^{\infty}} \|\phi_{j,n}\|_{L^1} \leq C2^{-j}.
    \end{align*}
    On the other hand, in view of Remark~\ref{rem-regwav-quasidiag} with $b=-2$, we have that $|\langle u^{\dagger},\phi_{j,n} \rangle|\lesssim 2^{-2j}$ if $\supp(\phi_{j,n})$ does not intersect the discontinuities of $u^\dagger$.

    Let $\Sigma_j^1$ be the set of indices of wavelets at scale $j$ whose support intersects the discontinuities of $u^\dagger$, and  $\Sigma_j^2$ be the set of indices of wavelets at scale $j$ whose support does not intersects the discontinuities of $u^\dagger$. It is possible to see, arguing as in the part on \textit{NonLinear Approximation of Piecewise Regular Images} 
 of \cite[Section 9.3.1]{Ma}, that $|\Sigma_j^1|\asymp 2^j$ and $|\Sigma_j^2|\asymp 2^{2j}$. Therefore we have that
    \begin{align*}
        \|P_{\Lambda_{j_0}}^{\perp}\Phi u^{\dagger}\|_2^2 &=
        \sum_{j>j_0} \sum_n |\langle u^{\dagger},\phi_{j,n} \rangle|^2 \\ &=
        \sum_{j>j_0} \lc \sum_{(j,n)\in\Sigma_j^1} |\langle u^{\dagger},\phi_{j,n} \rangle|^2 +
        \sum_{(j,n)\in\Sigma_j^2}|\langle u^{\dagger},\phi_{j,n} \rangle|^2 \rc \\ &\lesssim
        \sum_{j>j_0} (2^j 2^{-2j} + 2^{2j}2^{-4j}) \lesssim 2^{-j_0}.
    \end{align*}
    This implies that $\|P_{\Lambda_j}^{\perp}\Phi u^{\dagger}\|_2 \leq C 2^{-aj}$ is satisfied with $a=1/2$.

    From a similar argument (see formula $(9.63)$ in \cite{Ma}), it follows that $|(\Phi{u^{\dagger}})^{(i)}|\leq i^{-1}$, where $|(\Phi{u^{\dagger}})^{(\cdot)}|$ is the non-increasing rearrangement of $|\Phi{u^{\dagger}}|$. It is not possible to infer a $p$-compressibility estimate since $\Phi{u^{\dagger}}$ might not even be summable and therefore we cannot resort to Theorem \ref{thm:radon-exp-est}. Instead, we will exploit the fact that the sparsity error in the recovery estimates of Theorem \ref{thm:radon-main-thm} depends on the truncated signal $P_{\Lambda_{j_0}}\Phi{u}^{\dagger}$. More precisely, for a certain fixed sparsity $s\geq 1$, we have that
    \begin{align*}
        \sigma_s(P_{\Lambda_{j_0}}\Phi{u^{\dagger}})_1 \leq \sum_{i=s}^M |(\Phi{u^{\dagger}})^{(i)}| \leq
        \sum_{i=s}^M \frac{1}{i} \lesssim \log(M/s) \leq \log(M),
    \end{align*}
    where $M=|\Lambda_{j_0}|$. Recalling that $M\asymp 2^{2j_0}$ and choosing $j_0$ as a function of $\beta$ as in Theorem  \ref{thm:radon-exp-est}, we get that $\log(M)\lesssim \log(1/\beta)$. Therefore we conclude that
    \begin{align*}
        \sigma_s(P_{\Lambda_{j_0}}\Phi{u^{\dagger}})_1 \lesssim \log(1/\beta)s^{1/2-p}
    \end{align*}
    with $p=1/2$. The rest of the estimates are deduced in the exact same way as in the proof of Theorem \ref{thm:radon-exp-est}, which imply \eqref{eq:ex-cartoon-like-est}.
\end{proof}

\bibliography{refs}{}
\bibliographystyle{abbrv}

\appendix

\section{Proof of Theorem \ref{thm-samp-grip}}
\label{appendix:proofs}
We first recall a variant of the Bernstein-Talagrand inequality that will be needed later.
\begin{lemma}[\textbf{Bernstein inequality} {\cite[Theorem 8.42]{FR}}]
\label{lem-thm-samp-grip-bernstein}
Let $f_x\colon\bC^M\rightarrow\bR$, be a countable family of functions indexed by $x\in\cB_*$. Let $Y_1,\dots,Y_m$ be independent random variables such that the following conditions are satisfied for every $k=1,\dots,m$ and for $x\in\cB_*$:
\begin{equation}
    \bE f_x(Y_k) = 0;\quad
    |f_x(Y_k)|\leq K \text{ a.s.};\quad
    \bE|f_x(Y_k)|^2 \leq \Sigma^2.
\end{equation}
Consider the random variable
\begin{equation}
    Z \coloneqq \sup_{x\in\cB_*} \sum_{k=1}^m f_x(Y_k).
\end{equation}
Then, for every $\varepsilon>0$,
\begin{equation}
    \bP(Z\geq\bE Z+\varepsilon) \leq \exp\lc
    - \frac{\varepsilon^2}{2(m\Sigma^2+2K\bE Z)+2K\varepsilon/3} \rc.
\end{equation}
\end{lemma}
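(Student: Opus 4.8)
The plan is to recognize the stated bound as Bousquet's sharp form of Talagrand's concentration inequality for the supremum of a centered empirical process, and to establish it by the entropy method followed by a Cram\'er--Chernoff optimization. First I would reduce to the normalized case $K=1$ by replacing each $f_x$ with $f_x/K$; this sends $\Sigma^2 \mapsto \Sigma^2/K^2$, $Z \mapsto Z/K$, $\bE Z \mapsto \bE Z/K$ and $\varepsilon \mapsto \varepsilon/K$, and a direct check shows that the exponent in the claimed inequality is invariant under this rescaling, so the general statement follows from the case $K=1$. With $K=1$ fixed, the target becomes a control of the centered log-moment generating function $g(\lambda) \coloneqq \log \bE\, e^{\lambda(Z-\bE Z)}$, since the tail bound then follows from $\bP(Z \ge \bE Z + \varepsilon) \le \exp(-\lambda \varepsilon + g(\lambda))$ after optimizing over $\lambda>0$. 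Because the index class is countable, $Z$ is measurable and its defining supremum is the increasing limit of suprema over finite subfamilies; a monotone-limit argument lets me assume the supremum is attained at a measurable maximizing index $x^\star$.

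The core of the argument is the entropy method. By the tensorization (subadditivity) of entropy one has $\mathrm{Ent}(e^{\lambda Z}) \le \sum_{k=1}^m \bE[\mathrm{Ent}^{(k)}(e^{\lambda Z})]$, where $\mathrm{Ent}^{(k)}$ denotes the entropy taken over $Y_k$ with the remaining variables frozen. For each $k$ I would introduce the leave-one-out supremum $Z_k \coloneqq \sup_x \sum_{j\ne k} f_x(Y_j)$, which is independent of $Y_k$; comparing the maximizer $x^\star$ of $Z$ against $Z_k$ gives the one-sided control $Z - Z_k \le f_{x^\star}(Y_k) \le 1$, while the maximizer of $Z_k$ yields $Z - Z_k \ge -1$. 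Feeding these into the duality formula for $\mathrm{Ent}^{(k)}(e^{\lambda Z})$ with the $Y_k$-independent reference point $Z_k$, and invoking a convexity estimate for $\phi(u) \coloneqq e^u - u - 1$, each conditional entropy is dominated by a term measured against $Y_k$ only. Summing over $k$ and using the centering $\bE f_x(Y_k)=0$ together with $\bE|f_x(Y_k)|^2 \le \Sigma^2$ to bound $\sum_k \bE[(Z-Z_k)^2 \mid \cdot] \lesssim m\Sigma^2 + \bE Z$, the tensorized inequality becomes a Bernstein-type differential inequality for $g$, roughly $\lambda g'(\lambda) - g(\lambda) \le \tfrac{\lambda^2}{2}\,(m\Sigma^2 + 2\bE Z)/(1-\lambda/3)$ for $0<\lambda<3$.

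Finally I would integrate this differential inequality from $0$, using $g(0)=0$ and $g'(0)=0$, to obtain $g(\lambda) \le \tfrac12\, \lambda^2 (m\Sigma^2 + 2\bE Z)/(1-\lambda/3)$, insert it into the Chernoff bound, and optimize over $\lambda \in (0,3)$; the minimizing choice produces precisely the numerator $\varepsilon^2$ and the denominator $2(m\Sigma^2 + 2\bE Z) + 2\varepsilon/3$, and undoing the rescaling restores the factor $K$. The main obstacle is the single-coordinate entropy estimate with optimal constants: crude applications of the entropy method give the correct Bernstein shape but lose the sharp constant $2/3$ and the exact $2\bE Z$ term, so Bousquet's refined convexity argument is needed. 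Additional care is required because $Z-Z_k$ need not be nonnegative and because the supremum over the countable class must be realized by a measurable selection, both of which I would handle through the finite-subfamily approximation above.
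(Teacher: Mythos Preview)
The paper does not give a proof of this lemma at all: it is stated as a quotation of \cite[Theorem~8.42]{FR} and used as a black box. So there is no ``paper's own proof'' to compare against. Your outline is the standard route to Bousquet's sharp version of Talagrand's inequality --- normalization to $K=1$, tensorization of entropy, the leave-one-out comparison $Z-Z_k \le f_{x^\star}(Y_k)$, a Bernstein-type differential inequality for the log-MGF, and Chernoff optimization --- which is indeed how the result is established in the cited reference. The sketch is correct in broad strokes; the only genuinely delicate step you flag, namely the single-coordinate entropy bound with the exact constants $2/3$ and $2\bE Z$, is the right place to be careful, and Bousquet's convexity argument is the canonical way to get them.
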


\subsection{The case $f_{\nu}= 1$} Let us first consider the case where $f_{\nu}= 1$, so that $\mu=\nu$ is a probability measure. Assumption \ref{ass-coherence-bound} thus becomes
\begin{equation}
\label{eq-thm-samp-grip-assmodified}
    \|F_t\phi_i\|_{\cHt} \leq B\omega_i,\quad t\in\cD,\ i\in\Gamma.
\end{equation}
Furthermore, we have $Q=I$.

Consider the following seminorm on $\bC^{M\times M}$:
\begin{equation}
    \|B\|_{G,\omega,\lambda} \coloneqq \sup_{x\in\cB} |\langle Bx,x \rangle|, 
\end{equation} where we set
\begin{equation}
    \cB \coloneqq \{x\in\bC^M\colon\ \|Gx\|_2\leq 1,\ \|x\|_{0,\omega}\leq\lambda\}.
\end{equation}
A standard argument allows us to characterize the smallest $\delta>0$ for which $A$ satisfies the g-RIP with respect to $(G,\omega,\delta,\lambda)$ as
\begin{equation}\label{lem-thm-samp-grip-delta}
    \delta_* \coloneqq \|A^*A - G^*G\|_{G,\omega,\lambda}.
\end{equation}
Moreover, let $$X_t \coloneqq F_t\Phi^*\iota_{\Lambda},\quad t\in\cD.$$ Elementary computations show that
\begin{equation} \label{lem-thm-samp-grip-astara}
    A^*A = \frac{1}{m} \sum_{k=1}^m X_{t_k}^* X_{t_k},
\end{equation}
\begin{equation}\label{lem-thm-samp-grip-xstarx}
    G^*G = \bE X_t^* X_t. 
\end{equation}

The goal is now to provide a bound for  $\bE \delta_* = \bE \| A^*A - G^*G \|_{G, \omega,\lambda}$. To this aim, let us introduce a Rademacher vector $(\varepsilon_1,\dots,\varepsilon_m)$, independent of the samples $(t_1,\dots,t_m)$, and consider the following random variable, for $u\in\cB$:
\begin{equation}
    Z_u \coloneqq \sum_{k=1}^m \varepsilon_k \|X_{t_k}u\|_{\cHt}^2.
\end{equation}
It is straightforward to check that $\{Z_u\colon\ u\in\cB\}$ is a centered, symmetric, sub-Gaussian stochastic process \cite[Section 8.6]{FR} with respect to the following pseudo-metric:
\begin{equation}
    d(u,v) \coloneqq \lc \sum_{k=1}^m ( \|X_{t_k}u\|_{\cHt}^2 - \|X_{t_k}v\|_{\cHt}^2 )^2 \rc^{1/2}.
\end{equation} We are thus in the position to use Dudley's inequality.

\begin{proposition}\label{prop bound bE}
The following estimate holds:
\begin{equation}
    \bE\delta_* \leq \frac{16\sqrt{2}}{m} \bE\int_0^{\infty} \sqrt{\log\cN(\cB,d,y)}\,\diff{y}.
\end{equation}
\end{proposition}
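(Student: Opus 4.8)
The plan is to identify $\delta_*$ with the supremum of a centred empirical process and then bound its expectation by the standard two-step argument of symmetrization followed by Dudley's entropy integral. First, combining \eqref{lem-thm-samp-grip-delta}, \eqref{lem-thm-samp-grip-astara}, \eqref{lem-thm-samp-grip-xstarx} with the identities $\langle A^*Au,u\rangle = \frac1m\sum_{k=1}^m\|X_{t_k}u\|_{\cHt}^2$ and $\langle G^*Gu,u\rangle = \bE\|X_t u\|_{\cHt}^2$, one obtains
\[
\delta_* = \sup_{u\in\cB}\Big|\langle (A^*A-G^*G)u,u\rangle\Big| = \frac1m\sup_{u\in\cB}\Big|\sum_{k=1}^m\big(\|X_{t_k}u\|_{\cHt}^2 - \bE\|X_t u\|_{\cHt}^2\big)\Big|.
\]

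Second, I would apply the symmetrization inequality \cite[Lemma~8.4]{FR} to the family of functions $t\mapsto \|X_t u\|_{\cHt}^2$ indexed by $u\in\cB$, relative to the i.i.d.\ sample $(t_1,\dots,t_m)$ and the Rademacher vector $(\varepsilon_1,\dots,\varepsilon_m)$ introduced above (independent of the $t_k$); this gives
\[
\bE\delta_* \le \frac2m\,\bE\sup_{u\in\cB}\Big|\sum_{k=1}^m\varepsilon_k\|X_{t_k}u\|_{\cHt}^2\Big| = \frac2m\,\bE\sup_{u\in\cB}|Z_u|.
\]

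Third, I would condition on $(t_1,\dots,t_m)$ and work only with the randomness of the $\varepsilon_k$. As already recorded above, $(Z_u)_{u\in\cB}$ is then a centred, symmetric process with sub-Gaussian increments with respect to the pseudometric $d$ (this is the elementary estimate $\bE_\varepsilon\exp(\theta(Z_u-Z_v)) = \prod_k\cosh(\theta(\|X_{t_k}u\|_{\cHt}^2-\|X_{t_k}v\|_{\cHt}^2)) \le \exp(\theta^2 d(u,v)^2/2)$, using $\cosh s\le e^{s^2/2}$). Since $0\in\cB$ — indeed $\|G\cdot 0\|_2=0\le 1$ and $\|0\|_{0,\omega}=0\le\lambda$ — and $Z_0=0$, Dudley's inequality \cite[Theorem~8.23]{FR} yields
\[
\bE_\varepsilon\sup_{u\in\cB}Z_u = \bE_\varepsilon\sup_{u\in\cB}(Z_u-Z_0) \le 4\sqrt2\int_0^\infty\sqrt{\log\cN(\cB,d,y)}\,\diff{y}.
\]
By the symmetry of the Rademacher distribution, $(Z_u)_u$ and $(-Z_u)_u$ have the same law, so $\bE_\varepsilon\sup_u|Z_u| \le \bE_\varepsilon\sup_u Z_u + \bE_\varepsilon\sup_u(-Z_u) = 2\,\bE_\varepsilon\sup_u Z_u$. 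Taking the outer expectation over $(t_1,\dots,t_m)$ and inserting this into the symmetrization bound gives
\[
\bE\delta_* \le \frac2m\cdot 2\cdot 4\sqrt2\,\bE\int_0^\infty\sqrt{\log\cN(\cB,d,y)}\,\diff{y} = \frac{16\sqrt2}{m}\,\bE\int_0^\infty\sqrt{\log\cN(\cB,d,y)}\,\diff{y},
\]
which is the claim.

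There is no substantive obstacle in this argument: all the heavy lifting is done by the cited symmetrization lemma and by Dudley's inequality, both of which apply verbatim here. The only points requiring attention are the bookkeeping of the constants — a factor $2$ from symmetrization, a factor $2$ from passing from $Z_u$ to $|Z_u|$, and the $4\sqrt2$ from Dudley, which multiply to $16\sqrt2$ — and the observation that Dudley's inequality must be invoked conditionally on the sample $(t_1,\dots,t_m)$, since the metric $d$ and hence the covering numbers $\cN(\cB,d,y)$ are themselves random; this is exactly why the entropy integral remains inside the outer expectation $\bE$ in the final bound.
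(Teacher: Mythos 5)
Your proof is correct and follows essentially the same route as the paper: rewrite $\delta_*$ via \eqref{lem-thm-samp-grip-delta}--\eqref{lem-thm-samp-grip-xstarx} as the supremum of a centred empirical process, symmetrize with \cite[Lemma 8.4]{FR}, and apply Dudley's entropy bound conditionally on the sample. The only difference is that you make explicit the constant bookkeeping ($2$ from symmetrization, $2$ from passing to $|Z_u|$ using $0\in\cB$ and Rademacher symmetry, $4\sqrt2$ from Dudley) that the paper compresses into ``the desired conclusion directly follows from Dudley's inequality.''
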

\begin{proof}
In light of \eqref{lem-thm-samp-grip-delta}, \eqref{lem-thm-samp-grip-astara} and \eqref{lem-thm-samp-grip-xstarx}, we have 
\begin{equation} \bE\delta_* = \bE\bigg\|\frac{1}{m}\sum_{k=1}^m (X_{t_k}^* X_{t_k}-\bE X_{t_k}^*X_{t_k})\bigg\|_{G,\omega,\lambda}.
\end{equation} 
Using a standard symmetrization trick (see, for instance, \cite[Lemma 8.4]{FR}), we get
\begin{equation} \bE\delta_* \le \frac{2}{m}\bE\bigg\| \sum_{k=1}^m \varepsilon_k X_{t_k}^*X_{t_k} \bigg\|_{G,\omega,\lambda} =
    \frac{2}{m}\bE\sup_{u\in\cB} \bigg| \sum_{k=1}^m \varepsilon_k \|X_{t_k}u\|_{\cHt}^2 \bigg|. 
\end{equation} 
The desired conclusion directly follows from Dudley's inequality.
\end{proof}
In order to get a manageable estimate for the covering numbers, we first slightly modify the underlying metric. Before, we need to prove a technical estimate involving the constant
\begin{equation}
    \tau\coloneqq B^2 \|G^{-1}\|^2 \lambda.
\end{equation}

\begin{lemma}
\label{lem-thm-samp-grip-tauest} 
Assume that \eqref{eq-thm-samp-grip-assmodified} holds. Then 
\begin{equation}
    \|X_t x\|_{\cHt} \leq \sqrt{\tau},\quad t\in\cD,\ x\in\cB.
\end{equation}
\end{lemma}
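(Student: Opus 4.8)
The plan is to bound $\|X_t x\|_{\cHt}$ directly, by combining the explicit form of the synthesis operator with the coherence bound \eqref{eq-thm-samp-grip-assmodified} and the membership $x\in\cB$. Fix $t\in\cD$ and $x\in\cB$, and let $S\coloneqq\supp x\subseteq\Lambda$, so that by definition of $\cB$ we have $\omega(S)\le\lambda$ and $\|Gx\|_2\le1$. Writing $X_t x=F_t\Phi^*\iota_\Lambda x=\sum_{i\in S}x_i\,F_t\phi_i$ and applying the triangle inequality in $\cHt$ together with \eqref{eq-thm-samp-grip-assmodified}, I get $\|X_t x\|_{\cHt}\le\sum_{i\in S}|x_i|\,\|F_t\phi_i\|_{\cHt}\le B\sum_{i\in S}|x_i|\,\omega_i$.

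Next I would estimate this weighted $\ell^1$-type sum by Cauchy--Schwarz: $\sum_{i\in S}|x_i|\,\omega_i\le\|x\|_2\,\bigl(\sum_{i\in S}\omega_i^2\bigr)^{1/2}=\|x\|_2\sqrt{\omega(S)}\le\sqrt{\lambda}\,\|x\|_2$. Finally, since $G$ is assumed invertible, the trivial operator-norm bound gives $\|x\|_2=\|G^{-1}Gx\|_2\le\|G^{-1}\|\,\|Gx\|_2\le\|G^{-1}\|$, using again that $x\in\cB$. Chaining the three inequalities yields $\|X_t x\|_{\cHt}\le B\|G^{-1}\|\sqrt{\lambda}=\sqrt{B^2\|G^{-1}\|^2\lambda}=\sqrt{\tau}$, which is the claim.

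There is no real obstacle here: the statement is an elementary consequence of the coherence bound, a weighted Cauchy--Schwarz inequality, and the estimate $\|x\|_2\le\|G^{-1}\|\,\|Gx\|_2$. The only point deserving a little care is to measure sparsity through the weighted size $\omega(S)=\sum_{i\in S}\omega_i^2$, which is precisely what makes the weights $\omega_i$ in \eqref{eq-thm-samp-grip-assmodified} cancel correctly against the $\sqrt{\lambda}$ produced by the constraint $\|x\|_{0,\omega}\le\lambda$, leaving exactly the quantity $\tau=B^2\|G^{-1}\|^2\lambda$.
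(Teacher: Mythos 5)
Your proof is correct and follows exactly the same route as the paper's: triangle inequality with the coherence bound, Cauchy--Schwarz against the weighted support size $\omega(S)\le\lambda$, and the bound $\|x\|_2\le\|G^{-1}\|\,\|Gx\|_2\le\|G^{-1}\|$. Nothing to add.
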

\begin{proof}
Let $t\in\cD$, $x\in\cB$. Let $S\coloneqq\supp(x)$. Using \eqref{eq-thm-samp-grip-assmodified}, we obtain
\begin{align*}
    \|F_t\Phi^* x\|_{\cHt} &\leq
    \sum_{i\in S} |x_i| \|F_t\Phi^* e_i\|_{\cHt} \leq
    \sum_{i\in S}|x_i| B\omega_i \\ &\leq
    \|x\|_2 B\sqrt{\omega(S)} \leq \|G^{-1}\| \|Gx\|_2 B\sqrt{\lambda} \leq \sqrt{\tau}. \qedhere
\end{align*} 
\end{proof}

For $1<p<\infty$, we define the following seminorm:
\begin{equation}
    \|u\|_{X,p} \coloneqq \lc \sum_{k=1}^m \|X_{t_k}u\|_{\cHt}^{2p} \rc^{1/2p},\quad u\in\bC^M.
\end{equation}
We denote the corresponding pseudo-distance by $d_{X,p}$.
\begin{lemma}
\label{lem-thm-samp-grip-distest}
Let $1<r<\infty$ and $r'$ such that $\frac{1}{r}+\frac{1}{r'}=1$. Then we have
\begin{equation}
    d(u,v) \leq 2\tau^{(r-1)/2r}m^{1/2r}\|A^*A\|_{G,\omega,\lambda}^{1/2r} d_{X,r'}(u,v),\qquad u,v\in\cB.
\end{equation}
\end{lemma}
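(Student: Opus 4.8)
The plan is to expand $d(u,v)^2=\sum_{k=1}^m\bigl(\|X_{t_k}u\|_{\cHt}^2-\|X_{t_k}v\|_{\cHt}^2\bigr)^2$ and reduce it, by elementary algebra and a single application of Hölder's inequality, to the product of a factor that is exactly $d_{X,r'}(u,v)^2$ and a factor controlled by the uniform bound of Lemma~\ref{lem-thm-samp-grip-tauest} together with $\|A^*A\|_{G,\omega,\lambda}$. First I would write, for each $k$, using $a^2-b^2=(a-b)(a+b)$ and the reverse triangle inequality,
\[
\bigl|\,\|X_{t_k}u\|_{\cHt}^2-\|X_{t_k}v\|_{\cHt}^2\,\bigr|\le\|X_{t_k}(u-v)\|_{\cHt}\bigl(\|X_{t_k}u\|_{\cHt}+\|X_{t_k}v\|_{\cHt}\bigr),
\]
so that, after squaring and summing,
\[
d(u,v)^2\le\sum_{k=1}^m\|X_{t_k}(u-v)\|_{\cHt}^2\bigl(\|X_{t_k}u\|_{\cHt}+\|X_{t_k}v\|_{\cHt}\bigr)^2.
\]

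Next I would apply Hölder's inequality to this sum of products with the conjugate exponents $r'$ and $r$, getting
\[
d(u,v)^2\le\Bigl(\sum_{k=1}^m\|X_{t_k}(u-v)\|_{\cHt}^{2r'}\Bigr)^{1/r'}\Bigl(\sum_{k=1}^m\bigl(\|X_{t_k}u\|_{\cHt}+\|X_{t_k}v\|_{\cHt}\bigr)^{2r}\Bigr)^{1/r}.
\]
The first factor equals $d_{X,r'}(u,v)^2$ by the very definition of $\|\cdot\|_{X,r'}$. For the second factor I would peel off $2(r-1)$ powers and bound each occurrence of $\|X_{t_k}u\|_{\cHt}+\|X_{t_k}v\|_{\cHt}$ by $2\sqrt{\tau}$, which is legitimate since $u,v\in\cB$ and Lemma~\ref{lem-thm-samp-grip-tauest} applies; combining this with $(\|X_{t_k}u\|_{\cHt}+\|X_{t_k}v\|_{\cHt})^2\le 2(\|X_{t_k}u\|_{\cHt}^2+\|X_{t_k}v\|_{\cHt}^2)$ and the identity $\sum_k\|X_{t_k}w\|_{\cHt}^2=m\langle A^*Aw,w\rangle\le m\|A^*A\|_{G,\omega,\lambda}$ for $w\in\cB$ (from \eqref{lem-thm-samp-grip-astara} and the definition of $\|\cdot\|_{G,\omega,\lambda}$), one obtains
\[
\sum_{k=1}^m\bigl(\|X_{t_k}u\|_{\cHt}+\|X_{t_k}v\|_{\cHt}\bigr)^{2r}\le 2^{2(r-1)}\tau^{r-1}\cdot 2\cdot 2m\|A^*A\|_{G,\omega,\lambda}=2^{2r}\tau^{r-1}m\|A^*A\|_{G,\omega,\lambda}.
\]

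Putting the two factors together gives $d(u,v)^2\le 4\,\tau^{(r-1)/r}m^{1/r}\|A^*A\|_{G,\omega,\lambda}^{1/r}\,d_{X,r'}(u,v)^2$, and taking square roots yields the stated inequality with constant $2$. I do not expect a genuine obstacle here: the only points requiring care are the bookkeeping of exponents and constants in the Hölder step and the power-peeling, and remembering to invoke the normalization $u,v\in\cB$ at the two places it is used — once to apply Lemma~\ref{lem-thm-samp-grip-tauest}, and once to pass from $\langle A^*Aw,w\rangle$ to $\|A^*A\|_{G,\omega,\lambda}$.
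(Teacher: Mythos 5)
Your proof is correct, and the constants work out exactly to the stated bound of $2\tau^{(r-1)/2r}m^{1/2r}\|A^*A\|_{G,\omega,\lambda}^{1/2r}$. It follows essentially the same route as the paper's proof — the factorization $a^2-b^2=(a+b)(a-b)$, H\"older with exponents $r,r'$, the reverse triangle inequality to produce $d_{X,r'}(u,v)$, peeling off $2(r-1)$ powers via Lemma~\ref{lem-thm-samp-grip-tauest}, and the bound $\sum_k\|X_{t_k}w\|_{\cHt}^2\le m\|A^*A\|_{G,\omega,\lambda}$ for $w\in\cB$ — differing only in the cosmetic bookkeeping of the factor $2$ (you use $(a+b)^2\le 2(a^2+b^2)$ where the paper uses the triangle inequality in $\ell^{2r}$).
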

\begin{proof}
By H\"older's inequality, we have that
\begin{align*}
    d(u,v) &= \lc \sum_{k=1}^m \lc\|X_{t_k}u\|_{\cHt}^2-\|X_{t_k}v\|_{\cHt}^2\rc^2 \rc^{1/2} \\ &=
    \lc \sum_{k=1}^m \lc\|X_{t_k}u\|_{\cHt}+\|X_{t_k}v\|_{\cHt}\rc^2\lc\|X_{t_k}u\|_{\cHt}-\|X_{t_k}v\|_{\cHt}\rc^2 \rc^{1/2} \\ &\leq
    \lc \sum_{k=1}^m (\|X_{t_k}u\|_{\cHt}+\|X_{t_k}v\|_{\cHt})^{2r} \rc^{1/2r}
    \lc \sum_{k=1}^m (\|X_{t_k}u\|_{\cHt}-\|X_{t_k}v\|_{\cHt})^{2r'} \rc^{1/2r'} \\ &\leq
    2\sup_{w\in\cB} \lc \sum_{k=1}^m\|X_{t_k}w\|_{\cHt}^{2r} \rc^{1/2r}
    \lc \sum_{k=1}^m (\|X_{t_k}(u-v)\|_{\cHt})^{2r'} \rc^{1/2r'} \\ &=
    2\sup_{w\in\cB} \lc \sum_{k=1}^m\|X_{t_k}w\|_{\cHt}^{2r} \rc^{1/2r} d_{X,r'}(u,v). 
\end{align*}
By Lemma \ref{lem-thm-samp-grip-tauest} and \eqref{lem-thm-samp-grip-astara}, we get
\begin{align*}
    \sup_{w\in\cB} \lc \sum_{k=1}^m\|X_{t_k}w\|_{\cHt}^{2r} \rc^{1/2r} &=
    \sup_{w\in\cB} \lc \sum_{k=1}^m \|X_{t_k}w\|_{\cHt}^2\|X_{t_k}w\|_{\cHt}^{2(r-1)} \rc^{1/2r} \\ &\leq
    \tau^{(r-1)/2r} \sup_{w\in\cB} \lc \sum_{k=1}^m \|X_{t_k}w\|_{\cHt}^2 \rc^{1/2r} \\ &=
    \tau^{(r-1)/2r} m^{1/2r} \|A^*A\|^{1/2r}_{G,\omega,\lambda}. \qedhere
\end{align*}
\end{proof}

The relationship between $d(u,v)$ and $d_{X,r}(u,v)$ just proved reflects into an inequality for the corresponding covering numbers. 
\begin{proposition}
For any $r>1$, we have
\begin{equation}
    \int_0^{\infty} \sqrt{\log\cN( \cB,d,y)}\,\diff{y} \leq C(r) \int_0^{\infty} \sqrt{\log\cN( \cB,d_{X,r'},y)}\,\diff{y},
\end{equation}
where $C(r)\coloneqq 2\tau^{(r-1)/2r}m^{1/2r}\|A^*A\|_{G,\omega,\lambda}^{1/2r}$.
\end{proposition}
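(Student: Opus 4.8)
The plan is to deduce the stated comparison of covering-number integrals directly from the metric domination established in Lemma~\ref{lem-thm-samp-grip-distest}. The key observation is a general and elementary monotonicity fact: if $d$ and $\rho$ are two pseudo-metrics on a set $T$ with $d(u,v)\le c\,\rho(u,v)$ for all $u,v\in T$, then any $y/c$-net for $(T,\rho)$ is automatically a $y$-net for $(T,d)$, hence $\mathcal N(T,d,y)\le \mathcal N(T,\rho,y/c)$ for every $y>0$. Applying this with $T=\cB$, $d$ the pseudo-metric from Proposition~\ref{prop bound bE}, $\rho=d_{X,r'}$, and $c=C(r)=2\tau^{(r-1)/2r}m^{1/2r}\|A^*A\|_{G,\omega,\lambda}^{1/2r}$ (the constant furnished by Lemma~\ref{lem-thm-samp-grip-distest}), yields
\begin{equation*}
    \mathcal N(\cB,d,y)\le \mathcal N(\cB,d_{X,r'},y/C(r)),\qquad y>0.
\end{equation*}

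Next I would integrate this pointwise bound. Taking square roots of logarithms preserves the inequality, so
\begin{equation*}
    \int_0^\infty \sqrt{\log\mathcal N(\cB,d,y)}\,\diff y \le \int_0^\infty \sqrt{\log\mathcal N(\cB,d_{X,r'},y/C(r))}\,\diff y,
\end{equation*}
and the change of variables $y\mapsto C(r)y$ on the right-hand side produces exactly the factor $C(r)$ in front of $\int_0^\infty \sqrt{\log\mathcal N(\cB,d_{X,r'},y)}\,\diff y$, which is the claimed inequality. One should note that the integrals are finite: for $y$ larger than the diameter of $\cB$ in the relevant pseudo-metric the covering number is $1$ and the integrand vanishes, so the upper limit of integration is effectively finite; and the logarithmic singularity of $\sqrt{\log\mathcal N}$ near $y=0$ is integrable for the finite-dimensional set $\cB\subset\bC^M$ (a standard volumetric covering bound gives $\log\mathcal N\lesssim M\log(1/y)$ near $0$). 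These are routine points worth a sentence but not a detailed argument.

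The only genuinely delicate point is bookkeeping on the $y=0$ behaviour and on measurability, but neither is a real obstacle here: the left-hand integral is exactly the quantity appearing in Proposition~\ref{prop bound bE}, so its finiteness is implicitly part of the framework we are working in, and the change of variables is valid regardless. The main conceptual content is simply that Lemma~\ref{lem-thm-samp-grip-distest} provides a uniform (in $u,v$) comparison constant, so that rescaling the radius converts a $d$-covering problem into a $d_{X,r'}$-covering problem, and Dudley-type entropy integrals scale linearly under such a rescaling. I would therefore present the proof as: (i) recall the net-inclusion monotonicity of covering numbers under metric domination; (ii) invoke Lemma~\ref{lem-thm-samp-grip-distest} to get the pointwise covering-number bound with constant $C(r)$; (iii) integrate and change variables. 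The purpose of this step, to be used in the sequel, is that the entropy of $(\cB,d_{X,r'})$ is far more tractable than that of $(\cB,d)$ because $d_{X,r'}$ is (a power of) an $\ell^{2r'}$-type norm on the $X_{t_k}u$, allowing one to exploit the coherence bound \eqref{eq-thm-samp-grip-assmodified} and classical volumetric estimates.
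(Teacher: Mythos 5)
Your proposal is correct and follows exactly the paper's argument: invoke Lemma~\ref{lem-thm-samp-grip-distest} to get the metric domination $d\le C(r)\,d_{X,r'}$ on $\cB$, deduce $\cN(\cB,d,y)\le\cN(\cB,d_{X,r'},y/C(r))$ from the standard monotonicity of covering numbers, and conclude by the change of variables $y\mapsto C(r)y$ in the entropy integral. The extra remarks on finiteness and the $y=0$ behaviour are fine but not needed beyond what the paper itself assumes.
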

\begin{proof}
Lemma \ref{lem-thm-samp-grip-distest} implies that
\begin{equation}
    d(u,v)\leq C(r) d_{X,r'}(u,v),\quad u,v\in\cB.
\end{equation}
Standard properties of covering numbers (see, for instance, \cite{FR}) imply that
\begin{equation}
    \cN(\cB,d,y) \leq \cN(\cB,C(r) d_{X,r'},y) = \cN(\cB,d_{X,r'},y/C(r)),
\end{equation}
and therefore
\begin{equation}
    \int_0^{\infty} \sqrt{\log\cN(\cB,d,y)}\diff{y} \leq
    \int_0^{\infty} \sqrt{\log\cN(\cB,d_{X,r'},y/C(r))}\diff{y}.
\end{equation}
The substitution $y' = y/C(r)$ finally proves the claim.
\end{proof}

As a consequence of the modification of the metric, we accordingly change the underlying space. Consider the set
\begin{equation}
    T_{\omega}^{\lambda,M} \coloneqq \{x\in\bC^M\colon\ \|x\|_2\leq 1,\ \|x\|_{0,\omega}\leq \lambda\}.
\end{equation}
It is easy to prove that
\begin{equation}
    \cB \subseteq \|G^{-1}\| T_{\omega}^{\lambda,M}.
\end{equation}
\begin{proposition}
The following estimate holds:
\begin{equation}
    \int_0^{\infty} \sqrt{\log\cN(\cB,d_{X,r'},y)}\diff{y} \leq
    \|G^{-1}\| \int_0^{\infty} \sqrt{\log\cN(T_{\omega}^{\lambda,M},d_{X,r'},y)}\diff{y}.
\end{equation}
\end{proposition}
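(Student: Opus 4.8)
The plan is to deduce the estimate directly from the set inclusion $\cB \subseteq \|G^{-1}\| T_{\omega}^{\lambda,M}$ obtained just above, combined with two elementary facts about covering numbers: their monotonicity with respect to the set being covered, and their behaviour under dilations of that set when the underlying pseudo-distance comes from a seminorm. This mirrors the scaling argument already used in the previous proposition, the only difference being that here we rescale the set rather than the metric.

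First I would record that $d_{X,r'}$ is the pseudo-distance associated with the seminorm $\|\cdot\|_{X,r'}$, which is positively homogeneous: $\|cu\|_{X,r'}=c\,\|u\|_{X,r'}$ for all $c\ge 0$. Hence, for any $c>0$ and any $S\subseteq\bC^M$, the dilation $x\mapsto cx$ gives a bijection between $(y/c)$-nets of $(S,d_{X,r'})$ and $y$-nets of $(cS,d_{X,r'})$, so that $\cN(cS,d_{X,r'},y)=\cN(S,d_{X,r'},y/c)$; the fact that $d_{X,r'}$ is only a pseudo-metric is immaterial for this argument. Applying this with $c=\|G^{-1}\|>0$ (positive since $G$ is invertible) and $S=T_{\omega}^{\lambda,M}$, and combining it with the monotonicity bound $\cN(\cB,d_{X,r'},y)\le\cN(\|G^{-1}\|T_{\omega}^{\lambda,M},d_{X,r'},y)$ coming from $\cB\subseteq\|G^{-1}\|T_{\omega}^{\lambda,M}$, I obtain
\begin{equation}
    \cN(\cB,d_{X,r'},y) \le \cN\lc T_{\omega}^{\lambda,M}, d_{X,r'}, y/\|G^{-1}\| \rc, \qquad y>0.
\end{equation}

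Since $\cN\ge 1$ and $t\mapsto\sqrt{\log t}$ is nondecreasing on $[1,\infty)$, taking square roots of logarithms preserves this inequality pointwise in $y$; integrating over $y\in(0,\infty)$ and then performing the substitution $y=\|G^{-1}\|\,y'$ on the right-hand side produces exactly the factor $\|G^{-1}\|$ and yields
\begin{equation}
    \int_0^{\infty} \sqrt{\log\cN(\cB,d_{X,r'},y)}\,\diff{y} \le \|G^{-1}\| \int_0^{\infty} \sqrt{\log\cN(T_{\omega}^{\lambda,M},d_{X,r'},y')}\,\diff{y'},
\end{equation}
which is the claimed bound (both integrals are finite, as $T_{\omega}^{\lambda,M}$ is a bounded subset of the finite-dimensional space $\bC^M$). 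I do not anticipate a genuine obstacle here: the only point needing an explicit line of justification is the dilation identity $\cN(cS,d_{X,r'},y)=\cN(S,d_{X,r'},y/c)$, which is precisely the homogeneity argument used earlier, now applied to the set instead of the metric.
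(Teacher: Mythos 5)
Your argument is correct and is essentially the paper's own proof: both rest on the inclusion $\cB\subseteq\|G^{-1}\|T_{\omega}^{\lambda,M}$, the homogeneity of the seminorm $\|\cdot\|_{X,r'}$ (which makes rescaling the set equivalent to rescaling the metric, the form in which the paper states the identity), and the substitution $y'=y/\|G^{-1}\|$. Your write-up just makes the monotonicity and dilation steps more explicit.
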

\begin{proof}
It suffices to apply the identity $\cN(\cB,Cd,y) = \cN(\cB,d,y/C)$ to $\cB\subseteq\|G^{-1}\| T_{\omega}^{\lambda,M}$ with respect to $d_{X,r'}$, then consider the substitution $y' = y/\|G^{-1}\|$.
\end{proof}

The efforts to modify the underlying structure of the covering numbers so far is repaid by the reduction to an easier setting, which in turn leads us to a convenient bound. In fact, the proof of the following result is essentially identical to that given in the proof of \cite[Theorem 5.2]{RW}, up to minimal adjustments, hence is omitted. 

\begin{proposition}
The following estimate holds:
\begin{equation}
    \int_0^{\infty} \sqrt{\log\cN(T_{\omega}^{\lambda,M},d_{X,r'},y)}\diff{y} \leq
    C_1\sqrt{r'm^{1/r'}\lambda\log^2{\lambda}\log{M}},
\end{equation}
where $C_1>0$ is a universal constant.
\end{proposition}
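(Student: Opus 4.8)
The plan is to follow the blueprint of the weighted-sparse covering-number estimate of \cite[Theorem~5.2]{RW} (see also \cite[Chapter~8]{FR}), adapting it to the vector-valued pseudo-distance $d_{X,r'}$. Recall that in the normalized case $f_\nu = 1$ Assumption~\ref{ass-coherence-bound} reads $\|X_t e_i\|_{\cHt}=\|F_t\phi_i\|_{\cHt}\le B\omega_i$, so that for $u\in T_\omega^{\lambda,M}$, writing $S=\supp u$ (whence $\omega(S)\le\lambda$ and $|S|\le\lambda$), one has $\|X_t u\|_{\cHt}\le B\|u\|_{1,\omega}\le B\sqrt{\lambda}$, and therefore, setting $R\coloneqq m^{1/2r'}B\sqrt\lambda$, we get $\|u\|_{X,r'}\le R$. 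In particular the $d_{X,r'}$-diameter of $T_\omega^{\lambda,M}$ is at most $2R$, so Dudley's integral effectively runs over $(0,R)$; it will be split at the crossover scale $\epsilon_\ast\asymp B\sqrt{r'm^{1/r'}}$, for which $R/\epsilon_\ast\asymp\sqrt{\lambda/r'}$ and hence $\log(R/\epsilon_\ast)\asymp\log\lambda$.

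\emph{Maurey / empirical method (scales $\epsilon\ge\epsilon_\ast$).} Fix $u\in T_\omega^{\lambda,M}$ and, using $\|u\|_{1,\omega}\le\sqrt\lambda$, express $u$ as a convex combination $u=\sum_i p_i v_i$ of atoms $v_i$ lying in the fixed finite set $\{0\}\cup\{\sqrt\lambda\, z\,\omega_i^{-1}e_i\}$, where $z$ runs over a fixed $O(1)$-net of the unit circle and $i$ over $\Lambda$; this set has $\lesssim M$ elements, and each atom satisfies $\|v_i\|_{X,r'}\le R$. Drawing $L$ i.i.d.\ atoms $v_{I_1},\dots,v_{I_L}$ according to $(p_i)_i$ and using that an $\ell^{2r'}$-sum of copies of the Hilbert space $\cHt$ has type $2$ with constant $O(\sqrt{r'})$, one obtains $\bE\,\|\tfrac1L\sum_{\ell} v_{I_\ell}-u\|_{X,r'}^{2}\lesssim \tfrac{r'}{L}R^2$. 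Choosing $L\asymp r'R^2/\epsilon^2= r'm^{1/r'}B^2\lambda/\epsilon^2$ yields, for some realization, an $\epsilon$-approximation of $u$ in $d_{X,r'}$; since there are at most $(CM)^L$ such averages, this gives $\log\cN(T_\omega^{\lambda,M},d_{X,r'},\epsilon)\lesssim \tfrac{r'm^{1/r'}B^2\lambda}{\epsilon^2}\log M$.

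\emph{Volumetric bound (scales $\epsilon\le\epsilon_\ast$) and conclusion.} Write $T_\omega^{\lambda,M}=\bigcup_S\{x:\supp x\subseteq S,\ \|x\|_2\le 1\}$, the union being over the at most $\sum_{j\le\lambda}\binom Mj\le(eM/\lambda)^{\lambda}$ supports $S$ with $\omega(S)\le\lambda$. On each subspace $\bC^{|S|}$ one has $d_{X,r'}(x,y)\le R\|x-y\|_2$, so a Euclidean $(\epsilon/R)$-net gives $\log\cN(\cdot,d_{X,r'},\epsilon)\lesssim\lambda\log(R/\epsilon)$; summing over supports yields $\log\cN(T_\omega^{\lambda,M},d_{X,r'},\epsilon)\lesssim\lambda\log M+\lambda\log(R/\epsilon)$. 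Finally split $\int_0^R\sqrt{\log\cN(T_\omega^{\lambda,M},d_{X,r'},y)}\,\diff y$ at $\epsilon_\ast$: on $[\epsilon_\ast,R]$ the Maurey bound gives $\int_{\epsilon_\ast}^R\sqrt{r'm^{1/r'}B^2\lambda\log M}\,\tfrac{\diff y}{y}\lesssim\sqrt{r'm^{1/r'}B^2\lambda\log M}\,\log(R/\epsilon_\ast)\lesssim\sqrt{r'm^{1/r'}\lambda\log^2\lambda\log M}$, while on $(0,\epsilon_\ast]$ the volumetric bound, together with $\int_0^{\epsilon_\ast}\sqrt{\log(R/y)}\,\diff y\lesssim\epsilon_\ast\sqrt{\log(R/\epsilon_\ast)}$, contributes $\lesssim\epsilon_\ast\sqrt{\lambda\log M}+\sqrt\lambda\,\epsilon_\ast\sqrt{\log(R/\epsilon_\ast)}\lesssim\sqrt{r'm^{1/r'}\lambda\log^2\lambda\log M}$. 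Adding the two contributions proves the estimate (absorbing $B$ into the constant, as in \cite{RW}). The main technical point, and the only genuine departure from the scalar argument of \cite[Theorem~5.2]{RW}, is the empirical-method step: the type-$2$ constant $O(\sqrt{r'})$ of the $\ell^{2r'}$-sum of copies of $\cHt$ is precisely what forces the explicit factors $r'$ and $m^{1/r'}$, and the $\log^2\lambda$ factor is produced by the choice of crossover $\epsilon_\ast\asymp B\sqrt{r'm^{1/r'}}$, for which $\log(R/\epsilon_\ast)\asymp\log\lambda$.
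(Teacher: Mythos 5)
Your argument is correct and is precisely the proof the paper has in mind: the paper omits it, saying only that it is ``essentially identical'' to that of \cite[Theorem 5.2]{RW} up to minimal adjustments, and your Maurey/volumetric decomposition of Dudley's integral at the crossover scale $\epsilon_\ast$, with the type-$2$ constant $O(\sqrt{r'})$ of $\ell^{2r'}([m];\cHt)$ supplying the factors $r'$ and $m^{1/r'}$, is exactly that adaptation. The one caveat is that your computation (correctly) produces an extra factor $B$ in the final bound, which cannot be absorbed into a \emph{universal} constant $C_1$; the paper's statement shares this omission, and it is harmless downstream because $B\geq 1$ and the factor is reinstated through $\tau=B^2\|G^{-1}\|^2\lambda$ in Proposition~\ref{prop-thm-samp-grip-expval-est}.
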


At this stage, we are ready to prove the desired bound for $\bE \delta_*$. 
\begin{proposition}
\label{prop-thm-samp-grip-expval-est}
The following estimate holds:
\begin{equation}
    \bE\delta_* \leq C_2 \sqrt{ \frac{\tau\log^3{\tau}\log{M}}{m} } \sqrt{\bE\delta_*+1},
\end{equation}
where $C_2>0$ is a universal constant.
\end{proposition}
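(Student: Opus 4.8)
The plan is to concatenate the four estimates already proved in this subsection — Proposition~\ref{prop bound bE}, the covering-number comparison between $d$ and the auxiliary metric $d_{X,r'}$, the reduction from $\cB$ to $T_\omega^{\lambda,M}$, and the Dudley-type bound for $T_\omega^{\lambda,M}$ — then substitute the explicit value $C(r)=2\tau^{(r-1)/2r}m^{1/2r}\|A^*A\|_{G,\omega,\lambda}^{1/2r}$, and finally close the resulting self-referential inequality by controlling the random factor $\|A^*A\|_{G,\omega,\lambda}$ in terms of $\delta_*$ itself.

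Concretely, chaining the bounds (with a free parameter $r>1$) yields
\[
\bE\delta_* \le \frac{32\sqrt2\,C_1}{m}\,\tau^{(r-1)/2r}\,m^{1/2r}\,\|G^{-1}\|\,\sqrt{r'\,m^{1/r'}\lambda\log^2\lambda\log M}\;\bE\!\left[\|A^*A\|_{G,\omega,\lambda}^{1/2r}\right],
\]
where $C_1$ is the universal constant of the final covering-number estimate and the only surviving random quantity is $\|A^*A\|_{G,\omega,\lambda}^{1/2r}$. Collecting the powers of $m$ gives the exponent $-1+\tfrac1{2r}+\tfrac1{2r'}=-\tfrac12$, using $\tfrac1r+\tfrac1{r'}=1$; this cancellation is what makes the argument work for any admissible $r$.

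To handle the random factor, observe that $\|\cdot\|_{G,\omega,\lambda}$ is a seminorm and $\|G^*G\|_{G,\omega,\lambda}=\sup_{x\in\cB}\|Gx\|_2^2\le1$, so the triangle inequality gives $\|A^*A\|_{G,\omega,\lambda}\le\delta_*+1$ pointwise; since $t\mapsto t^{1/2r}$ is concave for $r\ge1$, Jensen's inequality then yields $\bE[\|A^*A\|_{G,\omega,\lambda}^{1/2r}]\le(\bE\delta_*+1)^{1/2r}\le(\bE\delta_*+1)^{1/2}$, the last step because $\bE\delta_*+1\ge1$ and $1/2r\le1/2$. Now choose $r$ so that $r'=\log\tau$, which is admissible because $\tau\ge3>e$ forces $\log\tau>1$, hence $r=\log\tau/(\log\tau-1)>1$; then $\tau^{(r-1)/2r}=\tau^{1/(2\log\tau)}=\sqrt e$ and $\sqrt{r'}=\sqrt{\log\tau}$. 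Recalling $\tau=B^2\|G^{-1}\|^2\lambda$ with $B\ge1$, so that $\|G^{-1}\|\sqrt\lambda=\sqrt\tau/B\le\sqrt\tau$, and tracking the logarithmic factors (using $\lambda\le\tau$, whence $\log\lambda\le\log\tau$, together with the remaining $\sqrt{\log M}$), the whole expression collapses to $\bE\delta_*\le C_2\sqrt{\tau\log^3\tau\log M/m}\;\sqrt{\bE\delta_*+1}$ with $C_2=32\sqrt2\,C_1\sqrt e$ universal.

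The delicate point is the choice of the interpolation exponent $r$: it must simultaneously force the net power of $m$ to be $-\tfrac12$ (automatic once $\tfrac1r+\tfrac1{r'}=1$ is exploited), keep $\tau^{(r-1)/2r}$ uniformly bounded — which pushes $r\to1^+$ and explains why $r=\log\tau/(\log\tau-1)$ is the natural choice and why the hypothesis $\tau\ge3$, ensuring $\log\tau>1$, is precisely what is required — and still leave $\|A^*A\|_{G,\omega,\lambda}$ raised only to a power $\le\tfrac12$, so that the self-referential term $\sqrt{\bE\delta_*+1}$ can later be absorbed (this absorption is carried out in the remainder of the proof of Theorem~\ref{thm-samp-grip}). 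Keeping the logarithmic factors organized so as to land exactly on $\log^3\tau\log M$ rather than on a mixture of $\log\tau$ and $\log M$ powers is the other bit of bookkeeping that needs care.
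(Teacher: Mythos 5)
Your proposal is correct and follows essentially the same route as the paper's proof: chain the Dudley bound with the covering-number comparisons, exploit $\tfrac1r+\tfrac1{r'}=1$ to get the net factor $m^{-1/2}$, choose $r'\asymp\log\tau$ to tame $\tau^{(r-1)/2r}$, and control $\bE\|A^*A\|_{G,\omega,\lambda}^{1/2r}$ via the triangle inequality, $\|G^*G\|_{G,\omega,\lambda}\le1$, and Jensen. The only (immaterial) differences are your choice $r'=\log\tau$ in place of the paper's $r'=1+\log\tau$ and the order in which Jensen and the power reduction $1/2r\le1/2$ are applied.
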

\begin{proof}
Combining Proposition \ref{prop bound bE} with all the estimates proved so far, we ultimately get
\begin{equation}
    \bE\delta_* \leq C_2' m^{-1+1/2r+1/2r'} \tau^{(r-1)/2r} \|G^{-1}\| (r'\lambda\log^2{\lambda}\log{M})^{1/2}
    \bE\|A^*A\|_{G,\omega,\lambda}^{1/2r}, 
\end{equation} where $C_2'>0$ is a universal constant.
Note that $m^{-1+1/2r+1/2r'}=m^{-1/2}$. Given the arbitrariness in the choice of $r$, let us conveniently set
\begin{equation}
    r = 1+(\log{\tau})^{-1} \quad \Rightarrow \quad r'=1+\log{\tau}. 
\end{equation}
As a result, we obtain
\begin{equation}
    \tau^{(r-1)/2r} = \tau^{1/2r'} = \tau^{1/2(1+\log{\tau})} \leq e^{1/2},
\end{equation}
and, using that $\log\tau\ge\log 3\ge 1$,
\begin{equation}
    \sqrt{r'} = \sqrt{1+\log{\tau}} \leq \sqrt{2}\sqrt{\log{\tau}}.
\end{equation}
Finally, note that
\begin{equation}
    \|A^*A\|_{G,\omega,\lambda} \leq
    \|A^*A - G^*G\|_{G,\omega,\lambda}+\|G^*G\|_{G,\omega,\lambda}.
\end{equation}
In particular, for $x\in\cB$ we have
\begin{equation}
    |\langle G^*Gx,x \rangle| = \|Gx\|_2^2 \leq 1,
\end{equation}
hence $\|G^*G\|_{G,\omega,\lambda}\leq 1$. Therefore, 
\begin{align*}
    \|A^*A\|_{G,\omega,\lambda}^{1/2r} &\leq
    \lc \|A^*A-G^*G\|_{G,\omega,\lambda}+1 \rc^{1/2r} \\ &\leq
    \lc \|A^*A-G^*G\|_{G,\omega,\lambda}+1 \rc^{1/2}.
\end{align*}
By Jensen's inequality and \eqref{lem-thm-samp-grip-delta}, we conclude that
\begin{equation}
    \bE\|A^*A\|^{1/2r}_{G,\omega,\lambda} \leq
    \bE \lc \|A^*A-G^*G\|_{G,\omega,\lambda}+1 \rc^{1/2} \leq
    \sqrt{\bE\delta_*+1}. \qedhere
\end{equation}
\end{proof}

Given the bound just proved, it is easy to derive a lower bound for the number of samples $m$ needed in order to make $\bE \delta_*$ sufficiently small. 
\begin{proposition}
\label{prop-thm-samp-grip-expval-finest}
There exists a universal constant $C>0$ such that, for any $\delta\in(0,1)$,
\begin{equation}
    m \geq C\delta^{-2}\tau\log^3{\tau}\log{M}
    \quad\Rightarrow\quad \bE\delta_*\leq \delta/2.
\end{equation}
\end{proposition}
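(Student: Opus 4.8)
The plan is to read the estimate of Proposition~\ref{prop-thm-samp-grip-expval-est} as a quadratic inequality in the single scalar unknown $\bE\delta_*$ and solve it by a short bootstrap. Abbreviate $a \coloneqq \bE\delta_*$ and set $\kappa \coloneqq C_2\sqrt{\tau\log^3\tau\log M/m}$, so that Proposition~\ref{prop-thm-samp-grip-expval-est} reads $a \le \kappa\sqrt{a+1}$. All the probabilistic and geometric content (symmetrization, Dudley's inequality, the metric change, the covering number estimate for $T_\omega^{\lambda,M}$, and the Jensen step) has already been used to obtain this inequality, so the remaining task is purely elementary.

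First I would treat the two regimes $a\le 1$ and $a>1$ separately. If $a\le 1$, then $a\le\kappa\sqrt{a+1}\le\sqrt2\,\kappa$ directly. If instead $a>1$, then $a+1<2a$, hence $a\le\sqrt2\,\kappa\sqrt a$, and dividing by $\sqrt a>0$ gives $\sqrt a\le\sqrt2\,\kappa$; provided $\kappa$ is at most a small absolute constant (say $\kappa\le 1/2$, which forces $\sqrt2\,\kappa<1$), this yields $a<1$, contradicting $a>1$. Therefore, as soon as $\kappa\le 1/2$, the second regime is vacuous and in all cases $\bE\delta_*\le\sqrt2\,\kappa$.

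It then remains to choose the universal constant $C$ so that the hypothesis $m\ge C\delta^{-2}\tau\log^3\tau\log M$ forces $\kappa\le\delta/(2\sqrt2)$; since $\delta<1$ this bound is automatically $\le 1/2$, so the dichotomy above applies and moreover $\bE\delta_*\le\sqrt2\,\kappa\le\delta/2$, as required. Unwinding the definition of $\kappa$, the condition $\kappa\le\delta/(2\sqrt2)$ is equivalent to $C_2^2\,\tau\log^3\tau\log M/m\le\delta^2/8$, i.e.\ to $m\ge 8C_2^2\,\delta^{-2}\tau\log^3\tau\log M$. Hence $C\coloneqq 8C_2^2$ works, and of course any larger universal constant does as well.

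I do not expect a genuine obstacle here: the only point that requires a little attention is making sure the single threshold on $m$ does double duty — it must simultaneously make $\kappa$ small enough to exclude the large-$a$ branch of the bootstrap and small enough to deliver the target bound $\delta/2$ — but both are subsumed by the requirement $\kappa\le\delta/(2\sqrt2)$, using $\delta<1$. (One should also note in passing that $\tau\ge 3$ guarantees the logarithmic factors are bounded below, so the manipulations of $\log\tau$ inherited from Proposition~\ref{prop-thm-samp-grip-expval-est} are harmless.)
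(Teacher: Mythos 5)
Your proposal is correct and follows essentially the same route as the paper: both read the bound of Proposition~\ref{prop-thm-samp-grip-expval-est} as the scalar inequality $\bE\delta_*\leq\kappa\sqrt{\bE\delta_*+1}$ and conclude that $\bE\delta_*\lesssim\kappa$ once $\kappa$ is small, then choose $C$ so that the hypothesis on $m$ forces $\kappa\leq\delta/(2\sqrt2)$ (the paper uses $\delta/4$ and the elementary implication $x\leq a\sqrt{x+1}\Rightarrow x\leq 2a$ for $a\in(0,1)$, whereas you resolve the quadratic inequality by a two-case bootstrap — an immaterial difference yielding a slightly different admissible constant).
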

\begin{proof}
It is clear that for any $C\ge 16C_2^2$, the assumptions imply that
\begin{equation}
    C_2 \sqrt{ \frac{\tau\log^3{\tau}\log{M}}{m} } \leq \frac{\delta}{4}.
\end{equation}
By Proposition \ref{prop-thm-samp-grip-expval-est}, this implies that
\begin{equation}
\label{eq-prop-thm-samp-grip-expval-finest-01}
    \bE\delta_* \leq \frac{\delta}{4} \sqrt{\bE\delta_*+1}.
\end{equation}
A straightforward computation shows that, if $a\in(0,1)$ and $x>0$, then
\begin{equation}
    x \leq a\sqrt{x+1}
    \quad\Rightarrow\quad
    x \leq 2a.
\end{equation}
Therefore, from \eqref{eq-prop-thm-samp-grip-expval-finest-01} we infer
\begin{equation}
    \bE\delta_* \leq \delta/2,
\end{equation}
that is the claim. 
\end{proof}

The claim of Theorem \ref{thm-samp-grip} will be finally proved once a bound on the deviation of $\delta_*$ is established, in the spirit of the concentration inequality reported in Lemma \ref{lem-thm-samp-grip-bernstein}. To this aim, notice first that the map
\begin{equation}
    \bC^M\ni x \mapsto |\langle (A^*A-G^*G)x,x \rangle|\in\bR
\end{equation}
is continuous. Therefore, using $\cB'$ to denote a countable dense subset of $\cB$, we get 
\begin{equation}
    \delta_* = \sup_{x\in\cB'} |\langle(A^*A-G^*G)x,x\rangle|.
\end{equation}
Moreover, setting $\cP\coloneqq\{\pm 1\}$ allows us to write
\begin{equation}
    \delta_* = \sup_{(x,p)\in\cB'\times\cP} p\langle(A^*A-G^*G)x,x\rangle,
\end{equation}
and \eqref{lem-thm-samp-grip-astara} further yields 
\begin{equation}\label{eq:delta-star}
    \delta_* = \frac{1}{m} \sup_{(x,p)\in\cB'\times\cP} \sum_{k=1}^m f_{x,p}(X_{t_k}),
\end{equation}
where we set $f_{x,p}(B) \coloneqq p\langle (B^*B - G^*G)x,x \rangle$.

We are now in the position to obtain a tail bound for $\delta_*$. Indeed, the assumptions of the Bernstein-type inequality in Lemma \ref{lem-thm-samp-grip-bernstein} are satisfied, as shown below. 

\begin{lemma}
\label{lem-thm-samp-grip-bernstein-est}
For every $k=1,\dots,m$ and for $(x,p)\in\cB'\times\cP$,
\begin{equation}
    \bE f_{x,p}(X_{t_k}) = 0,\qquad
    |f_{x,p}(X_{t_k})|\leq \tau,\qquad
    \bE|f_{x,p}(X_{t_k})|^2 \leq \tau.
\end{equation}
\end{lemma}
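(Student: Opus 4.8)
The plan is to verify the three bullet points of Lemma~\ref{lem-thm-samp-grip-bernstein} for the functions $f_{x,p}$ defined in \eqref{eq:delta-star}, so that the Bernstein inequality applies to $\delta_*$ and yields the desired concentration (and hence the g-RIP with high probability). Throughout, $(x,p)\in\cB'\times\cP$ and $k\in\{1,\dots,m\}$ are fixed.

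\textbf{Zero mean.} Recall $f_{x,p}(X_{t_k}) = p\langle (X_{t_k}^*X_{t_k} - G^*G)x,x\rangle$, where $X_t = F_t\Phi^*\iota_\Lambda$ and, by \eqref{lem-thm-samp-grip-xstarx}, $G^*G = \bE X_t^*X_t$. Since $t_k$ is distributed as $\nu=\mu$ (we are in the case $f_\nu=1$), we get $\bE X_{t_k}^*X_{t_k} = \bE X_t^*X_t = G^*G$, so $\bE f_{x,p}(X_{t_k}) = p\langle (\bE X_{t_k}^*X_{t_k} - G^*G)x,x\rangle = 0$.

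\textbf{Almost sure bound.} Write $f_{x,p}(X_{t_k}) = p(\|X_{t_k}x\|_{\cHt}^2 - \|Gx\|_2^2)$. For $x\in\cB'\subset\cB$ we have $\|Gx\|_2\le 1$, hence $0\le\|Gx\|_2^2\le 1$. Lemma~\ref{lem-thm-samp-grip-tauest} gives $\|X_{t_k}x\|_{\cHt}\le\sqrt{\tau}$, so $0\le\|X_{t_k}x\|_{\cHt}^2\le\tau$. Since $\tau = B^2\|G^{-1}\|^2\lambda \ge B^2\lambda \ge 1$ (using $B\ge 1$, $\|G^{-1}\|\ge$ a positive quantity, and $\lambda\ge 1$; more simply $\tau\ge 3$ by hypothesis), the two quantities being subtracted lie in $[0,\tau]$ and $[0,1]\subseteq[0,\tau]$ respectively, so their difference has absolute value at most $\tau$. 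Thus $|f_{x,p}(X_{t_k})|\le\tau$.

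\textbf{Second moment bound.} Again using $f_{x,p}(X_{t_k}) = p(\|X_{t_k}x\|_{\cHt}^2 - \|Gx\|_2^2)$, the centering $\bE\|X_{t_k}x\|_{\cHt}^2 = \|Gx\|_2^2$ gives
\begin{equation}
\bE|f_{x,p}(X_{t_k})|^2 = \bE\big(\|X_{t_k}x\|_{\cHt}^2 - \|Gx\|_2^2\big)^2 = \bE\|X_{t_k}x\|_{\cHt}^4 - \|Gx\|_2^4 \le \bE\|X_{t_k}x\|_{\cHt}^4.
\end{equation}
By Lemma~\ref{lem-thm-samp-grip-tauest}, $\|X_{t_k}x\|_{\cHt}^2\le\tau$ pointwise, so $\bE\|X_{t_k}x\|_{\cHt}^4 \le \tau\,\bE\|X_{t_k}x\|_{\cHt}^2 = \tau\|Gx\|_2^2 \le \tau$. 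Hence $\bE|f_{x,p}(X_{t_k})|^2\le\tau$. This completes the verification of all three conditions, proving the lemma. There is no serious obstacle here: the only inputs are the deterministic bound from Lemma~\ref{lem-thm-samp-grip-tauest} and the identity \eqref{lem-thm-samp-grip-xstarx}, and the mild bookkeeping point is simply noting that $\tau\ge 1$ so that the variance parameter $\Sigma^2$ and the uniform bound $K$ in Lemma~\ref{lem-thm-samp-grip-bernstein} can both be taken equal to $\tau$.
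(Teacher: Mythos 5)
Your proof is correct and follows essentially the same route as the paper: zero mean from \eqref{lem-thm-samp-grip-xstarx}, the almost sure bound from Lemma~\ref{lem-thm-samp-grip-tauest} combined with $\|Gx\|_2^2\le 1\le\tau$, and the second moment via $\bE|f_{x,p}(X_{t_k})|^2=\bE\|X_{t_k}x\|_{\cHt}^4-\|Gx\|_2^4\le\tau\,\bE\|X_{t_k}x\|_{\cHt}^2\le\tau$. (One small caveat: your step $B^2\|G^{-1}\|^2\lambda\ge B^2\lambda$ would require $\|G^{-1}\|\ge 1$, which is not automatic, but your fallback observation that $\tau\ge 3$ by the hypothesis of Theorem~\ref{thm-samp-grip} makes this immaterial.)
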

\begin{proof}
The zero mean property of $f_{x,p}(X_{t_k})$ follows from  \eqref{lem-thm-samp-grip-xstarx}. 

Concerning the bound for $|\bE f_{x,p}(X_{t_k})|$, Lemma \ref{lem-thm-samp-grip-tauest} implies that 
\begin{equation}
    \|X_{t_k}x\|_{\cHt}^2 \leq \tau,\quad x\in\cB.
\end{equation}
On the other hand, we have that
\begin{equation}
    \|Gx\|_2^2 \leq 1\leq\tau,\quad x\in\cB.
\end{equation}
Therefore, for $(x,p)\in\cB'\times\cP$, we get that
\begin{equation}
\begin{split}
    |f_{x,p}(X_{t_k})| &\leq
    \max\lc \|X_{t_k}x\|_{\cHt}^2 - \|Gx\|_2^2, \|Gx\|_2^2 - \|X_{t_k}x\|_{\cHt}^2 \rc \\ &\leq
    \max\lc \|X_{t_k}x\|_{\cHt}^2 , \|Gx\|_2^2 \rc \leq \tau.
\end{split}
\end{equation}

Finally, we use the inequalities for $x\in\cB$ already proved, namely
\begin{equation}
    \|X_{t_k}x\|_{\cHt}^2\leq \tau;\quad
    \bE\|X_{t_k}x\|_{\cHt}^2=\|Gx\|_2^2 \leq 1.
\end{equation}
For $(x,p)\in\cB'\times\cP$, we thus get
\begin{align*}
    \bE|f_{x,p}(X_{t_k})|^2 &=
    \bE| \|X_{t_k}x\|_{\cHt}^2 - \|Gx\|_2^2 |^2 \\ &=
    \bE\|X_{t_k}x\|_{\cHt}^4 - \lc \bE\|X_{t_k}x\|_{\cHt}^2 \rc^2 \\ &\leq
    \tau\bE\|X_{t_k}x\|_{\cHt}^2 - \|Gx\|_2^4 \leq \tau. \qedhere
\end{align*} 
\end{proof}

Resorting to Lemma \ref{lem-thm-samp-grip-bernstein} gives the desired concentration bound. 
\begin{proposition}
\label{prop-thm-samp-grip-conc-ineq}
There exists a universal constant $C>0$ such that, for any $\delta,\gamma\in(0,1)$, if 
\begin{equation}
    m\geq C \delta^{-2} \tau\log(1/\gamma) \quad \text{and} \quad
    \bE\delta_*\leq \delta/2, 
\end{equation} then $\bP(\delta_*\geq\delta) \leq \gamma$. 
\end{proposition}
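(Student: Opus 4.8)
The plan is to apply the Bernstein-type inequality of Lemma~\ref{lem-thm-samp-grip-bernstein} to the representation \eqref{eq:delta-star} of $\delta_*$, combine the resulting tail bound with the expectation estimate $\bE\delta_*\le\delta/2$, and conclude by choosing $m$ large enough. First I would set $Y_k\coloneqq X_{t_k}$ and use the countable index family $(x,p)\in\cB'\times\cP$ together with the functions $f_{x,p}$, so that $Z\coloneqq m\delta_*=\sup_{(x,p)}\sum_{k=1}^m f_{x,p}(X_{t_k})$. Lemma~\ref{lem-thm-samp-grip-bernstein-est} verifies precisely the three hypotheses of Lemma~\ref{lem-thm-samp-grip-bernstein}, with $K=\Sigma^2=\tau$. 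Applying the inequality with $\varepsilon=(m\delta)/2$ (so that $\bE Z+\varepsilon\le m\delta/2+m\delta/2=m\delta$, using $\bE Z=m\,\bE\delta_*\le m\delta/2$) gives
\[
\bP(\delta_*\ge\delta)=\bP\!\lc Z\ge m\delta\rc\le\bP\!\lc Z\ge \bE Z+\tfrac{m\delta}{2}\rc\le\exp\!\lc-\frac{(m\delta/2)^2}{2(m\tau+2\tau\bE Z)+2\tau(m\delta/2)/3}\rc.
\]

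Next I would bound the denominator. Since $\bE Z\le m\delta/2\le m\tau$ (using $\delta<1\le\tau$, recalling $\tau\ge3$), we have $2(m\tau+2\tau\bE Z)+\tfrac{2}{3}\tau(m\delta/2)\le 2m\tau+4m\tau+\tfrac13 m\tau\le 7m\tau$, hence
\[
\bP(\delta_*\ge\delta)\le\exp\!\lc-\frac{m^2\delta^2/4}{7m\tau}\rc=\exp\!\lc-\frac{m\delta^2}{28\tau}\rc.
\]
Therefore, if $m\ge C\delta^{-2}\tau\log(1/\gamma)$ with $C\coloneqq 28$, the exponent is at most $-\log(1/\gamma)$, giving $\bP(\delta_*\ge\delta)\le\gamma$, which is the claim.

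The only mild subtlety — not really an obstacle — is bookkeeping with the universal constant $C$: one must make sure the same $C$ (or the maximum of the two) works both here and in Proposition~\ref{prop-thm-samp-grip-expval-finest}, so that the final statement of Theorem~\ref{thm-samp-grip} follows by intersecting the two events (the event $\bE\delta_*\le\delta/2$ is deterministic once $m$ is large enough, while $\bP(\delta_*\ge\delta)\le\gamma$ is the probabilistic part). One should also note that the supremum over the countable dense set $\cB'$ agrees with the supremum over $\cB$ by the continuity remark preceding \eqref{eq:delta-star}, so that $\delta_*$ as defined in \eqref{lem-thm-samp-grip-delta} really coincides with the quantity controlled here; this justifies replacing $\cB$ by $\cB'$ when invoking Lemma~\ref{lem-thm-samp-grip-bernstein}, whose hypotheses require a countable family. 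With these points in place the proof is a direct substitution into the Bernstein inequality.
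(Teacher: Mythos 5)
Your proof is correct and follows essentially the same route as the paper's: both apply the Bernstein inequality of Lemma~\ref{lem-thm-samp-grip-bernstein} to $Z=m\delta_*$ with $\varepsilon=m\delta/2$ and $K=\Sigma^2=\tau$ from Lemma~\ref{lem-thm-samp-grip-bernstein-est}, then absorb the denominator into a universal constant (the paper gets $C\ge 18$, you get $28$; either works). The only nitpick is that to bound $4\tau\,\bE Z\le 4m\tau$ you need $\bE Z\le m$ (which follows from $\bE Z\le m\delta/2\le m/2$), not the chain $\bE Z\le m\tau$ you wrote, but this does not affect the conclusion.
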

\begin{proof}
Since $\bE\delta_*\leq\delta/2$ by assumption, we have
\begin{equation}
    \bP(\delta_*\geq\delta) \leq
    \bP(\delta_*\geq\bE\delta_*+\delta/2) =
    \bP(m\delta_*\geq \bE(m\delta_*) + \varepsilon),
\end{equation}
where we set $\varepsilon = m\delta/2$. Recalling \eqref{eq:delta-star}, combining Lemma \ref{lem-thm-samp-grip-bernstein} (for $m\delta_*$) with the estimates from Lemma \ref{lem-thm-samp-grip-bernstein-est}, we get
\begin{align*}
    \bP(m\delta_*\geq \bE(m\delta_*) + \varepsilon) &\leq
    \exp\lc - \frac{\delta^2m^2/4}{2(m\tau+m\tau\delta)+m\tau \delta/3} \rc \\ &=
    \exp\lc - \frac{1}{4} \frac{1}{2(1+\delta)+\delta/3} \frac{\delta^2 m}{\tau} \rc \\ &\leq
    \exp\lc - \frac{1}{C} \frac{\delta^2 m}{\tau} \rc, 
\end{align*} where $C \ge 18$. The claim thus follows from the assumptions on $m$. 
\end{proof}

\subsection{The general case} 
\label{subsection-gencase}
We are ready to conclude the proof of Theorem \ref{thm-samp-grip} in full generality. So far, we have proved that if $f_{\nu}\equiv 1$, namely if $\mu=\nu$ is a probability measure, then by Proposition \ref{prop-thm-samp-grip-expval-finest} and Proposition \ref{prop-thm-samp-grip-conc-ineq} we get $\delta_*\leq \delta$ with probability exceeding $1-\gamma$. In particular, $A$ satisfies the g-RIP with respect to $(G,\omega,\delta,\lambda)$ with overwhelming probability. 

Consider now the case where $f_{\nu}\not\equiv 1$. We introduce the normalized measurement operators $\tilde{F}_t$ defined by
\begin{equation}
    \tilde{F}_t \coloneqq f_{\nu}(t)^{-1/2}F_t.
\end{equation}
Assumption \ref{ass-coherence-bound} now reads as follows:
\begin{equation}
    \|F_t\phi_i\|_{\cHt} \leq B\sqrt{f_{\nu}(t)}\omega_i,\quad t\in\cD,\ i\in\Gamma, 
\end{equation}
the latter being equivalent to 
\begin{equation}
    \|\tilde{F}_t\phi_i\|_{\cHt} \leq B\omega_i,\quad t\in\cD,\ i\in\Gamma.
\end{equation}
Moreover, note that $\tilde{F}_t$ satisfies the following identity with respect to $\diff{\nu} = f_{\nu}\diff{\mu}$ for every $u\in\cH_1$:
\begin{equation}
    \int_{\cD} \|\tilde{F}_t u\|_{\cHt}^2\diff{\nu}(t) =
    \int_{\cD} f_{\nu}(t)^{-1}\|F_t u\|_{\cHt}^2 f_{\nu}(t)\diff{\mu}(t) =
    \|Fu\|_{\cHd}^2.
\end{equation}
The proof thus reduces again to the case where $f_{\nu}\equiv 1$ for the measurement operators $\tilde{F_t}$ and the companion forward map $F$. In particular, with probability exceeding $1-\gamma$, we obtain that $\tilde{A}$ satisfies the g-RIP with respect to $(G,\omega,\delta,\lambda)$, where
\begin{equation}
    \tilde{A} \coloneqq \lc \frac{1}{\sqrt{m}} \tilde{F}_{t_k}\Phi^*\iota_\Lambda \rc_{k=1}^m =
    \lc \frac{1}{\sqrt{m}} f_{\nu}(t_k)^{-1/2}F_{t_k}\Phi^*\iota_\Lambda \rc_{k=1}^m = QA.
\end{equation}

\end{document}